 \newtheorem{thm}{Theorem}[section]
 \newtheorem{cor}[thm]{Corollary}
 \newtheorem{lem}[thm]{Lemma}
 \newtheorem{prop}[thm]{Proposition}
 \newtheorem{dfn}[thm]{Definition}
 \theoremstyle{definition}
 \newtheorem{exmp}{Example}
 \newtheorem{rem}{Remark}
  \DeclareMathAlphabet{\mathsfsl}{OT1}{cmss}{m}{sl}
\newcommand{\IM}{\mathrm{Im}}
  \newcommand{\RE}{\mathrm{Re}}
  \newcommand{\DR}{\mathbb{D}}
  \newcommand{\FH}{\mathfrak{H}}
 \newcommand{\Rnum}{\mathbb{R}}
 \newcommand{\Cnum}{\mathbb{C}}
 \newcommand{\Nnum}{\mathbb{N}}
 \newcommand{\mi}{\mathrm{i}}
 \newcommand{\dif}{\mathrm{d}}
  \newcommand{\diag}{\mathrm{diag}}
 \newcommand{\tensor}[1]{\mathsf{#1}}
 \newcommand{\abs}[1]{\left\vert#1\right\vert}
 \newcommand{\set}[1]{\left\{#1\right\}}
 \newcommand{\norm}[1]{\left\Vert#1\right\Vert}
 \newcommand{\innp}[1]{\langle {#1}\rangle}
\title{On the fourth moment theorem for the complex multiple Wiener-It\^{o} integrals}
\author{\rm\small
\noindent  Yong CHEN\\
\noindent \footnotesize School of Mathematics and Computing Science, Hunan
University of Science and Technology,\\
\noindent \footnotesize Xiangtan, Hunan, {\rm 411201},
P.R.China. zhishi@pku.org.cn\\
\rm\small \noindent Yong LIU\\
\noindent \footnotesize LMAM, School of Mathematical Sciences, Peking University,\\
\noindent \footnotesize Beijing,  {\rm 100871}, P. R. China.
liuyong@math.pku.edu.cn (Corresponding author)\\
}
\date{}
\begin{document}
\maketitle
\maketitle \noindent {\bf Abstract } \\
In this paper, a product formula of Hermite polynomials is given and then the relation between the real Wiener-It\^{o} chaos and the complex Wiener-It\^{o} chaos (or: multiple integrals) is shown. By this relation and the known multivariate extension of the fourth moment theorem for the real multiple integrals, the fourth moment theorem (or say: the Nualart-Peccati criterion) for the complex  Wiener-It\^{o} multiple integrals is obtained. \\
\vskip 0.1cm
 \noindent  {\bf Keywords:\,\,} Central Limit Theorem; Complex Gaussian Isonormal Process; Complex Hermite Polynomials; Fourth Moment Theorem; Wiener-It\^{o} Chaos Decomposition.\\
\vskip 0.1cm
 \noindent  {\bf MSC:\,\,}60F05, 60H05, 60H07, 60G15.
\maketitle
\tableofcontents
\begin{center}
   {\bf Notations}
   \begin{eqnarray*}
\FH  &:&\text{a real separable Hilbert space} \\
\FH^{\odot m} &:& \text{the $m$ times symmetric tensor product of $\FH$}\\
 \FH\oplus\FH &:& \text{the Hilbert space direct sum}\\
\FH_{\Cnum} &:& \text{the complexification of $\FH$}\\
\FH_{\Cnum}^{\odot m} &:&\text{the $m$ times symmetric tensor product of $\FH_{\Cnum}$}\\
X,Y &:&\text{the real Gaussian isonormal process over $\FH$} \\
W &: & \text{the real Gaussian isonormal process over $\FH\oplus \FH$}\\
X_{\Cnum},Y_{\Cnum} &:&\text{the complexicfation of $X,\,Y$}\\
Z &:& \text{the complex Gaussian isonormal process over $\FH_{\Cnum}$}\\
\mathcal{H}_n(X),\,\mathcal{H}_n(Y),\,\mathcal{H}_n(W)&:&\text{the $n$-th Wiener-It\^{o} chaos of $X,Y,W$}\\
\mathcal{H}^{\Cnum}_n(X),\,\mathcal{H}^{\Cnum}_n(W)&:&\text{the complexification of $\mathcal{H}_n(X),\,\mathcal{H}_{n}(W)$} \\
\mathscr{H}_{m,n}(Z) &:& \text{the $(m,n)$-th complex Wiener-It\^{o} chaos of $Z$}\\
{\rm symm }(f\otimes g) &:& \text{symmetrizing tensor product of $f$ and $g$ }
   \end{eqnarray*}
\end{center}

\section{Introduction}
In a seminal paper \cite{nup}, Nualart and Peccati showed that the convergence in distribution of a normalized sequence of real multiple Wiener-It\^{o} integrals towards a standard Gaussian law is equivalent to convergence of just the fourth moment to 3, which is
called the Nualart-Peccati criterion or the fourth moment theorem. Shortly afterwards, Peccati and Tudor \cite{pt} gave a
multivariate extension of this characterization. After the publication of the two beautiful papers, there are already several proofs of the criterion such as \cite{acp,kut,led,nour,np0,npr,nourorti}. Especially, Nualart and Ortiz-Latorre \cite{nourorti} presented a crucial methodological breakthrough, linking the criterion to Mallliavin operators, Nourdin and Peccati \cite{np0} established the combination of Stein's method and Malliavin calculus, and the recent papers \cite{led,acp} by Ledoux, Azoodeh, Campese and Poly were from the point of view of spectral theory of general Markov diffusion generators. For details, please refer to the monograph  \cite{np} written by Nourdin and Peccati. In addition, Nourdin and Peccati \cite{np1} showed that the convergence in distribution of a sequence of real multiple Wiener-It\^{o} integrals towards a centered $\chi^2$ law is equivalent to convergence of just the fourth moment and the third moment, and the multivariate extension of this theorem was shown by Nourdin and Rosi\'{n}ski recently \cite{nr}. Hu, Lu, Nourdin, Nualart and Poly \cite{hln,nournp,nourp} strengthened the convergence in law to the uniform convergence of the densities and the total variation convergence (which is equivalent to the $L^1(\Rnum^d)$ convergence of the densities) respectively.

Since both the real multiple Wiener-It\^{o} integrals and the complex multiple Wiener-It\^{o} integrals were established by K. It\^{o} almost at the same time in 1950s \cite{ito2,ito}, the question naturally arises if the Nualart-Peccati criterion is still valid for the complex multiple Wiener-It\^{o} integrals. The principle aim of this paper is to give a positive answer to the above-presented question. Our main results are the following two Nualart-Peccati criterions in abstract complex Wiener-It\^{o} chaos (see Definition~\ref{df27}).

For the rest of the paper, we shall denote by $\zeta\sim \mathcal{CN}(0,\sigma^2)$ a symmetric complex Gaussian variable, i.e., $\zeta=\xi_1+\mi\xi_2$ with $\xi_i\sim \mathcal{N}(0,\frac12 \sigma^2)$ and independent.
\begin{thm}\label{th1}
 Consider a sequence of random variable $F_k$ being the fixed $(m,n)$-th complex Wiener-It\^{o} multiple integrals, $m+n\ge 2$ and suppose that $E[\abs{F_k}^2]\to \sigma^2$ as $k\to \infty$, where $\abs{\cdot}$ is the absolute value (or modulus) of a complex number.
  \begin{itemize}
    \item[\textup{1)}]  If $m\neq n$, as $k\to \infty$, the following two assertions are equivalent:
  \begin{itemize}
    \item[\textup{(i)}] The sequence $(F_k)$ converges in distribution to $\zeta\sim \mathcal{CN}(0,\sigma^2)$;
    \item[\textup{(ii)}] $E[\abs{F_k}^4]\to 2\sigma^4$. 
  \end{itemize}
   \item[\textup{2)}]  If $m= n$ and $E[F_k^2]\to \sigma^2(a+\mi b)$ where $a,b\in \Rnum$ such that $a^2+b^2< 1$, that is to say the matrix $\tensor{C}=\begin{bmatrix} 1+a & b\\ b & 1-a  \end{bmatrix}$ is positive definite, the following two assertions are equivalent:
  \begin{itemize}
    \item[\textup{(i)}] The sequence $(\RE F_k,\,\IM  F_k)$ converges in distribution to a jointly normal law with the covariance $\frac{\sigma^2}{2}\tensor{C}$,
    \item[\textup{(ii)}] $E[\abs{F_k}^4]\to(a^2+b^2+2)\sigma^4$.
 \end{itemize}
  \item[\textup{3)}]  If $m= n$ and $E[F_k^2]\to \sigma^2(a+\mi b)$ where $a,b\in \Rnum$ such that $a^2+b^2=1$, i.e., the matrix $\tensor{C}$ is degenerated, the following three assertions are equivalent: \begin{itemize}
    \item[\textup{(i)}] The sequence $(\RE F_k,\,\IM  F_k)$ converges in distribution to a jointly normal law with the covariance $\frac{\sigma^2}{2}\tensor{C}$,
    \item[\textup{(ii)}] $E[\abs{F_k}^4]\to 3\sigma^4$,
    \item[\textup{(iii)}] $E[{F_k}^4]\to 3(a+\mi b)^2\sigma^4$.
 \end{itemize}
 \end{itemize}
\end{thm}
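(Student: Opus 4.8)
The plan is to transfer everything to the real Wiener chaos and then invoke the known real (multivariate) fourth moment theorem. Writing $U_k=\RE F_k$ and $V_k=\IM F_k$, the real--complex relation established earlier guarantees that $U_k,V_k$ are real multiple Wiener--It\^o integrals of the single order $p=m+n$ with respect to $W$. The identities $\abs{F_k}^2=U_k^2+V_k^2$ and $F_k^2=U_k^2-V_k^2+2\mi U_kV_k$ then convert the hypotheses into statements about the covariance of the pair $(U_k,V_k)$: from $E[\abs{F_k}^2]\to\sigma^2$ and $E[F_k^2]\to\sigma^2(a+\mi b)$ I read off $E[U_k^2]\to\frac{\sigma^2}{2}(1+a)$, $E[V_k^2]\to\frac{\sigma^2}{2}(1-a)$ and $E[U_kV_k]\to\frac{\sigma^2}{2}b$, i.e.\ the covariance of $(U_k,V_k)$ converges to $\frac{\sigma^2}{2}\tensor C$. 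A direct Gaussian computation shows that a centred normal vector with this covariance satisfies $E[\abs{F}^4]=(a^2+b^2+2)\sigma^4$ and $E[F^4]=3(a+\mi b)^2\sigma^4$, which identifies the limits in (ii) and (iii). For the forward implication (i)$\Rightarrow$(ii),(iii) I would use hypercontractivity on the fixed chaos $\mathcal H_p(W)$: all $L^q$-norms are comparable there, so $\{\abs{F_k}^4\}$ and $\{F_k^4\}$ are uniformly integrable and convergence in law upgrades to convergence of these fourth moments to the values just computed.

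For the \textbf{reverse direction in the non-degenerate case} ($m=n$, $\tensor C$ definite; the case $m\ne n$ is subsumed, since there the orthogonality of the complex chaoses forces $E[F_k^2]=0$, i.e.\ $a=b=0$), the idea is an averaging device. For each angle $\theta$ set $G_k^\theta=\RE(e^{-\mi\theta}F_k)=\cos\theta\,U_k+\sin\theta\,V_k$, again a real integral of order $p$, and use $\frac1{2\pi}\int_0^{2\pi}(G_k^\theta)^4\,\dif\theta=\frac38\abs{F_k}^4$. Since the fourth cumulant of a \emph{single} real multiple integral is nonnegative, $\kappa_4(G_k^\theta)\ge0$ for every $\theta$; inserting $E[(G_k^\theta)^4]=3(E[(G_k^\theta)^2])^2+\kappa_4(G_k^\theta)$ and integrating yields
\[\tfrac{8}{3}\cdot\tfrac1{2\pi}\int_0^{2\pi}\kappa_4(G_k^\theta)\,\dif\theta=E[\abs{F_k}^4]-\tfrac{8}{3}\cdot\tfrac1{2\pi}\int_0^{2\pi}3\bigl(E[(G_k^\theta)^2]\bigr)^2\,\dif\theta.\]
The subtracted term depends only on the convergent second moments and tends to $(a^2+b^2+2)\sigma^4$, so hypothesis (ii) forces $\int_0^{2\pi}\kappa_4(G_k^\theta)\,\dif\theta\to0$. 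By multilinearity of cumulants $\theta\mapsto\kappa_4(G_k^\theta)$ is a nonnegative trigonometric polynomial of degree $\le4$; on the finite-dimensional cone of such polynomials the integral is comparable to the supremum norm, so $\sup_\theta\kappa_4(G_k^\theta)\to0$, and in particular $\kappa_4(U_k)=\kappa_4(G_k^0)\to0$ and $\kappa_4(V_k)=\kappa_4(G_k^{\pi/2})\to0$. The univariate Nualart--Peccati criterion then makes $U_k,V_k$ converge to their marginal normals, and, the covariance having converged to $\frac{\sigma^2}{2}\tensor C$, the Peccati--Tudor theorem upgrades this to joint convergence to $N(0,\tfrac{\sigma^2}{2}\tensor C)$, which is (i).

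For the \textbf{degenerate case} ($m=n$, $a^2+b^2=1$) I would exploit a phase rotation. Replacing $F_k$ by $e^{-\mi\theta}F_k$ multiplies $a+\mi b$ by $e^{-2\mi\theta}$ and leaves $\abs{F_k}$ invariant, so I may assume $a=1$, $b=0$; then $E[V_k^2]\to0$, whence $V_k\to0$ in every $L^q$ and $E[U_k^2]\to\sigma^2$. Every moment carrying a factor $V_k$ then vanishes in the limit, so both (ii) and (iii) collapse to the single scalar condition $E[U_k^4]\to3\sigma^4$, which by the real fourth moment theorem is equivalent to $U_k\to N(0,\sigma^2)$; combined with $V_k\to0$ this is precisely (i). This proves (i)$\Leftrightarrow$(ii)$\Leftrightarrow$(iii) at once and explains why the degenerate case effectively reduces to a one-dimensional statement.

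The main obstacle, and the only point requiring more than bookkeeping, is the reverse direction of the non-degenerate case: a single scalar hypothesis on $E[\abs{F_k}^4]$ must control the \emph{two} separate fourth cumulants $\kappa_4(U_k)$ and $\kappa_4(V_k)$. The rotation/averaging trick resolves this by realizing $\abs{F_k}^4$ as an average of the fourth powers of the one-parameter family $G_k^\theta$ and using the sign-definiteness of each $\kappa_4(G_k^\theta)$; equivalently, one could verify directly that the cross cumulant $\kappa(U_k,U_k,V_k,V_k)$ is nonnegative for integrals of equal order (a Cauchy--Schwarz estimate on the contraction norms), so that the nonnegative decomposition $E[\abs{F_k}^4]-(a^2+b^2+2)\sigma^4\to\kappa_4(U_k)+2\kappa(U_k,U_k,V_k,V_k)+\kappa_4(V_k)$ forces each summand to vanish. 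Either route isolates the individual fourth cumulants, after which the real multivariate theorem does the rest.
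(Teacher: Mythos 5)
Your proposal is correct, and its skeleton coincides with the paper's: you transfer everything to the real chaos of order $p=m+n$ via the real--complex correspondence (Theorem~\ref{pp2}), you obtain the forward implications from hypercontractivity on a fixed chaos, and your degenerate case (phase rotation, $V_k\to 0$ in every $L^q$, then joint convergence) is essentially the paper's argument verbatim. The genuine difference is the engine driving the reverse implication in the non-degenerate case. The paper proves a two-function product formula (Lemma~\ref{prod1}, a minor extension of Nourdin's Lemma 4.1) to obtain the cross inequality $E[U_k^2V_k^2]-E[U_k^2]E[V_k^2]-2(E[U_kV_k])^2\ge 0$, and then splits $E[\abs{F_k}^4]$ minus its Gaussian value into three nonnegative cumulant-type terms which must each vanish --- exactly the ``alternative route'' you sketch in your final paragraph. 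Your primary route instead uses the rotation average $\frac1{2\pi}\int_0^{2\pi}\bigl(\RE(e^{-\mi\theta}F_k)\bigr)^4\,\dif\theta=\frac38\abs{F_k}^4$: it needs only the scalar inequality $\kappa_4\ge 0$ for a single real multiple integral, applied to the one-parameter family $G_k^\theta$, together with the equivalence of the $L^1$ and supremum norms on the finite-dimensional space of trigonometric polynomials of degree at most $4$ (applied on the cone of nonnegative ones). This is a legitimate and arguably more elementary mechanism --- no contraction estimates or product formula are required --- and it gives the slightly stronger conclusion $\sup_\theta\kappa_4(G_k^\theta)\to 0$, i.e. asymptotic Gaussianity of every real-linear functional $\RE(e^{-\mi\theta}F_k)$ at once; what the paper's Lemma~\ref{prod1} buys in exchange is explicit control of $\mathrm{Cov}(U_k^2,V_k^2)$, which the paper reuses in the proof of Theorem~\ref{th2} (the $\chi^2$ case), where your averaging identity has no direct analogue. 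A further nice touch in your write-up, left implicit in the paper's Eq.~(\ref{inn}), is the observation that case 1) is literally a sub-case of case 2): for $m\neq n$ the orthogonality of $\mathscr{H}_{m,n}(Z)$ and $\mathscr{H}_{n,m}(Z)$ forces $E[F_k^2]=0$, i.e. $a=b=0$, so the two non-degenerate cases can be handled uniformly rather than by ``mimicking''.
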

\begin{rem}
   Especially, in \textup{3)}, if $a=\pm 1$, i.e. the sequence $(F_k)$ (or the sequence $(\mi F_k)$ ) is real, then as $k\to \infty$, it converges in distribution to $\mathcal{N}(0,\sigma^2)$ if and only if $E[{F_k}^4]\to 3\sigma^4$, which is just the original Nualart-Peccati criterion.
\end{rem}
\begin{thm}\label{th2}Let $\xi(\alpha_1,\alpha_2)=G_1(\alpha_{1})+\mi G_2({\alpha}_{2})$ be a complex random variable such that $G_i(\alpha_i),\,i=1,2,$ being independent variables having centered $\chi^2$ distributions with $\alpha_i$ degree of freedom respectively.
 Consider a sequence of random variable $F_k$ belonging to the $(m,n)$-th complex Wiener-It\^{o} chaos, $m+n\ge 2$ being an even number and suppose that $E[\abs{F_k}^2]\to \sigma^2$ as $k\to \infty$.
  \begin{itemize}
    \item[\textup{1)}]  If $m\neq n$, as $k\to \infty$, the following two assertions are equivalent:
  \begin{itemize}
    \item[\textup{(i)}] The sequence $(F_k)$ converges in distribution to $\xi(\sigma^2/2,\,\sigma^2/2) $;
    \item[\textup{(ii)}] $E[F_k^3+3|F_k|^2\bar{F}_k]\to 8(1-\mi)\sigma^2$ and $E[\abs{F_k}^4]\to 2\sigma^4+24\sigma^2$.
        \end{itemize}
    \item[\textup{2)}]  If $m= n$ and $E[F_k^2]\to \sigma^2(a+\mi b)$ where $a,b\in \Rnum$ such that $a^2+b^2< 1$, as $k\to \infty$, the following two assertions are equivalent:
          \begin{itemize}
    \item[\textup{(i)}] The sequence $(F_k)$ converges in distribution to $\xi(\frac{1+a}{2}\sigma^2,\,\frac{1-a}{2}\sigma^2) $;
    \item[\textup{(ii)}] $E[F_k^3+3|F_k|^2\bar{F}_k]\to 8[1+a-\mi(1-a)]\sigma^2$ and $E[\abs{F_k}^4]\to (2+a^2)\sigma^4+24\sigma^2$.
        \end{itemize}
   \end{itemize}
\end{thm}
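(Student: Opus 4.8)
The plan is to run exactly the reduction-to-the-real-setting that underlies Theorem~\ref{th1}, but now against a $\chi^2$ target instead of a Gaussian one. By the real--complex chaos dictionary developed in this paper, a complex $(m,n)$-th chaos variable $F_k$ has real and imaginary parts $\RE F_k$ and $\IM F_k$ that both belong to the real Wiener--It\^o chaos $\mathcal H_{m+n}(W)$ of the real isonormal process $W$ over $\FH\oplus\FH$; since $m+n$ is assumed \emph{even}, these are real multiple integrals of even order $q=m+n$, and even order is precisely the parity needed for a $\chi^2$ limit of real integrals. So I would first restate the desired complex convergence $F_k\to\xi(\alpha_1,\alpha_2)=G_1+\mi G_2$ as the joint convergence of the real vector $(\RE F_k,\IM F_k)$ toward $(G_1,G_2)$, a pair of \emph{independent} centered $\chi^2$ variables, and then invoke the multivariate $\chi^2$ fourth-moment theorem of Nourdin and Rosi\'nski \cite{nr} (the vector extension of \cite{np1}) for sequences of real multiple integrals of fixed even order.

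The second step is to set up the moment dictionary that converts the complex quantities in the hypotheses into the real second-, third-, and fourth-order data consumed by the Nourdin--Rosi\'nski criterion. Writing $F=\RE F+\mi\,\IM F$, the second-moment data are read off from
\begin{equation*}
E[\abs{F}^2]=E[(\RE F)^2]+E[(\IM F)^2],\qquad
E[F^2]=E[(\RE F)^2]-E[(\IM F)^2]+2\mi\,E[(\RE F)(\IM F)],
\end{equation*}
so the hypotheses on $E[\abs{F_k}^2]$ and $E[F_k^2]$ fix the limiting covariance matrix of $(\RE F_k,\IM F_k)$; when $m\neq n$ one exploits the automatic vanishing $E[F_k^2]=0$ to force equal variances and zero covariance. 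The third-order information is captured by the key algebraic identity
\begin{equation*}
F^3+3\abs{F}^2\bar F=4(\RE F)^3-4\mi\,(\IM F)^3,
\end{equation*}
which isolates exactly the two third moments $E[(\RE F)^3]$ and $E[(\IM F)^3]$, i.e. the skewness-matching conditions for the two $\chi^2$ marginals, while the fourth-order datum is $E[\abs{F}^4]=E[(\RE F)^4]+2E[(\RE F)^2(\IM F)^2]+E[(\IM F)^4]$. Matching each of these against the moments of the target (a centered $\chi^2(\nu)$ having, in the present normalization, cumulants $\kappa_2=\nu$, $\kappa_3=4\nu$, $\kappa_4=24\nu$) reproduces the numerical right-hand sides displayed in (ii): for instance in 1) one gets $E[F^3+3\abs{F}^2\bar F]\to 16\nu(1-\mi)=8\sigma^2(1-\mi)$ and $E[\abs{F}^4]\to 48\nu+8\nu^2=24\sigma^2+2\sigma^4$ with $\nu=\sigma^2/2$, and the $a$-dependent formulas in 2) come out the same way with $\nu_1=\tfrac{1+a}{2}\sigma^2$, $\nu_2=\tfrac{1-a}{2}\sigma^2$.

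With the dictionary in hand, the third step is the application of the multivariate criterion itself: convergence of $(\RE F_k,\IM F_k)$ to independent centered $\chi^2$ laws is equivalent to (a) convergence of the covariance matrix, (b) convergence of each marginal's third and fourth cumulants to the prescribed $\chi^2$ values, and (c) the vanishing of all mixed (off-diagonal) contractions, which is what produces asymptotic independence of $G_1$ and $G_2$. Items (a) and (b) are delivered verbatim by the identities above, so the real content is to show that the short list of complex conditions in (ii) forces (c) as well.

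The main obstacle I anticipate is exactly point (c). The stated complex conditions control only the scalar quantities $E[\abs{F_k}^2]$, $E[F_k^2]$, $E[F_k^3+3\abs{F_k}^2\bar F_k]$, and $E[\abs{F_k}^4]$, whereas the genuine multivariate $\chi^2$ criterion a priori demands control of a larger family of mixed third- and fourth-order contractions of the kernels (those behind $E[(\RE F)^2(\IM F)]$, $E[(\RE F)(\IM F)^2]$, $E[(\RE F)^3(\IM F)]$, and the cross-cumulants that must vanish for independence, together with the requirement that the limiting covariance of $\RE F$ and $\IM F$ actually be zero). Showing that the few complex conditions already imply this full set---that one modulus fourth moment plus one third-moment combination suffice, just as the scalar fourth-moment theorem collapses many contraction conditions into one---is the crux. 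I expect the argument to rely on the rigidity supplied by the fact that $\RE F$ and $\IM F$ arise from one and the same complex symmetric kernel (so their contractions are not independent), combined with hypercontractivity on a fixed chaos, which ties the uncontrolled mixed contractions to the controlled ones and drives them to their correct vanishing or matching limits.
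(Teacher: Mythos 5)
Your overall reduction is the same as the paper's: write $F_k=U_k+\mi V_k$ with $U_k,V_k$ real multiple integrals of even order $m+n$ with respect to $W$ (Theorem~\ref{pp2}), translate the complex hypotheses through $F^3+3|F|^2\bar F=4U^3-4\mi V^3$ and $E[|F|^4]=E[U^4]+2E[U^2V^2]+E[V^4]$ (your cubic identity is exactly the algebraic content of the paper's computation $E[U_k^3]=\tfrac18E[(F_k+\bar F_k)^3]$), and then appeal to the Nourdin--Rosi\'nski multivariate $\chi^2$ machinery of \cite{nr}; your numerical dictionary for the limiting moments is also correct. However, at the step you yourself call the crux --- extracting from the \emph{single} summed condition on $E[|F_k|^4]$ the two \emph{marginal} $\chi^2$ fourth-moment conditions together with the decoupling of $U_k$ and $V_k$ --- you leave a genuine gap. ``Rigidity of the common kernel'' combined with ``hypercontractivity'' is not a mechanism that can split a sum of limits into limits of the summands: hypercontractivity only yields equivalence of $L^p$ norms on a fixed chaos (which is what makes the easy direction (i)$\Rightarrow$(ii) work, via uniform integrability), and it carries no sign information.

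What actually closes the argument is a positivity argument. The hypotheses, rearranged using $E[U_k^2]=E[V_k^2]\to\tfrac12\sigma^2$, give
\begin{align*}
 &\big(E[U_k^4]-12E[U_k^3]+24E[U_k^2]-3(E[U_k^2])^2\big)
 +\big(E[V_k^4]-12E[V_k^3]+24E[V_k^2]-3(E[V_k^2])^2\big)\\
 &\qquad+2\big(E[U_k^2V_k^2]-E[U_k^2]E[V_k^2]\big)\;\to\; 0,
\end{align*}
and each of the three brackets is nonnegative: the first two by the inequalities (3.5)--(3.7) of \cite{np1}, valid for real multiple integrals of \emph{even} order (this is where the parity assumption enters), and the third by Lemma~\ref{prod1}, i.e.\ inequality (\ref{u2v2}), which comes from the product formula for multiple integrals. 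Hence each bracket tends to zero separately. The first two brackets tending to zero are precisely the scalar $\chi^2$ fourth-moment criteria for $U_k$ and $V_k$; the third, $\mathrm{Cov}(U_k^2,V_k^2)\to 0$, is precisely the hypothesis of Theorem 4.5 of \cite{nr}, which upgrades the two marginal convergences to joint convergence toward \emph{independent} $\chi^2$ limits. In particular, the ``larger family of mixed third- and fourth-order contractions'' you worry about never needs to be controlled individually: asymptotic independence is bought by this single covariance of squares. Your outline cannot be completed without these two nonnegativity inputs, so the hypercontractivity/rigidity heuristics in your last paragraph should be replaced by this argument.
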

\begin{rem}
   In the above theorem, when $m+n$ is an odd integer, there does not exist any $(F_k)$ with bounded variances converging in distribution to $\xi(\alpha_1,\alpha_2)$ as $k\to \infty$ \cite{np1}.
\end{rem}
\begin{rem}
   It follows from Theorem 5.2 and Corollary 5.5 in \cite{nournp} by Nourdin, Nualart and Poly that we can strengthen the convergence in law of Theorem~\ref{th1} (when $\tensor{C}$ is positive definite) and Theorem~\ref{th2} to the convergence in total variation. Denote by $ \Gamma(F_k)$ the Malliavin matrix of $F_k=(\RE F_k,\,\IM  F_k)$. As a consequence of Theorem 5.2 and Corollary 5.5 in \cite{nournp}, we deduce that $E[\det \Gamma(F_k)]$ is bounded away form zero. Then each $F_k$ admits a density and the above convergence in total variation is equivalent to the convergence of the densities in $L^1(\Rnum^2)$.
\end{rem}

The key idea of the proof of the main results above is based on Theorem~\ref{co2} and Theorem~\ref{pp2} in section 3. Essentially, Theorem~\ref{pp2} means that both the real part and the imaginary part of a complex multiple integral can be represented by real multiple integrals respectively. Therefore, we can utilize the known multivariate extension of the fourth moment theorem for the real multiple integrals. According to our knowledge, Theorem~\ref{co2} and Theorem~\ref{pp2} are new and the proof is non-trivial.

The rest of the paper is organized as follows. In section~\ref{sec2}, we give some properties of the connection between the real and the complex Hermite polynomials by the view of complex (real) Herimite polynomials being the eigenfunctions of complex (real) Ornstein-Uhlenbeck operator. These properties play an important role of the proofs of the main results. In section~\ref{sec222}, similar to the standard definition of real isonormal Gaussian process (please refer to \cite{jan,np, Nua} and references therein), we define the complex isonormal Gaussian processes and the isometric mapping onto the complex Wiener-It\^{o} chaos. 
In section~\ref{sec23}, we obtain a product formula of Hermite polynomials (see Theorem~\ref{mth00}) and then show the relation between the real Wiener-It\^{o} multiple integrals and the complex Wiener-It\^{o} multiple integrals (see Theorem~\ref{co2} and Theorem~\ref{pp2}). In section~\ref{exp2}, we revisit the classical theory of It\^{o}'s complex multiple integrals and express the abstract Theorem~\ref{mth00}-\ref{co2} in the classical Wiener-It\^{o}'s multiple integrals. The proofs of the main theorems of the paper (Theorem~\ref{th1}-\ref{th2} and Theorem~\ref{co2}-\ref{pp2}) are presented in section~\ref{sec4}. In section~\ref{sec30}, we generalized Theorem~\ref{th1}-\ref{th2} slightly to the case of finite orthogonal sum of Wiener-It\^{o} chaos and the multivariate case.

There exists an alternative definition and representation of the complex multiple Wiener-It\^{o} integrals by Malliavin calculus. For the completion of the theory, we summarize concisely those facts as an appendix finially.
\section{Preliminaries}\label{sec22}
\subsection{Some properties of complex Hermite polynomials }\label{sec2}

Consider a 1-dimensional complex-valued Ornstein-Uhlenbeck process \cite{cl}
\begin{equation}\label{cp}
  \dif C_t=-e^{\mi \theta} C_t\dif t+ \sqrt{\rho\cos\theta}\dif \zeta_t
\end{equation}
where $C_t=C_1(t)+\mi C_2(t)$,  $\theta\in(-\frac{\pi}{2},\,\frac{\pi}{2}),\,\rho>0$, and $\zeta_t$ is a complex Brownian motion. It is clear that this complex-valued process can be represented by the 2-dimensional nonsymmetric (when $\theta\neq 0$) Ornstein-Uhlenbeck process
\begin{equation*}
  \begin{bmatrix}
\dif C_1(t)\\ \dif C_2(t) \end{bmatrix} =\begin{bmatrix} -\cos \theta & \sin \theta\\ - \sin \theta & -\cos \theta \end{bmatrix}
\begin{bmatrix} C_1(t)\\ C_2(t) \end{bmatrix} \dif t + \sqrt{\rho\cos\theta}
\begin{bmatrix} \dif B_1(t)\\ \dif B_2(t) \end{bmatrix}
\end{equation*}
Its generator is
\begin{align}\label{ou.op}
A_{\theta} & =\frac{\rho\cos\theta}{2}(\frac{\partial^2}{\partial x^2}+\frac{\partial^2}{\partial y^2})+(- x\cos\theta+ y\sin\theta)\frac{\partial}{\partial x}-( x\sin\theta+y\cos\theta)\frac{\partial}{\partial y}\nonumber\\
&=2\rho\cos\theta \frac{\partial^2}{\partial z\partial \bar{z}}-e^{\mi\theta} z \frac{\partial}{\partial z}-e^{-\mi\theta}\bar{z} \frac{\partial}{\partial \bar{z}},
\end{align}
which is nonsymmetric (when $\theta\neq 0$) but normal, where
$\frac{\partial f}{\partial z}=\frac12 (\frac{\partial f}{\partial x}-\mi \frac{\partial f}{\partial y}), \frac{\partial f}{\partial \bar{z}}=\frac12 (\frac{\partial f}{\partial x}+\mi \frac{\partial f}{\partial y})$ are
the formal derivative of $f$ at point $z=x+\mi y$ with $x,y\in\Rnum$. We call $\partial:=\frac{\partial }{\partial z}$ and $ \bar{\partial}:=\frac{\partial }{\partial \bar{z}}$ the complex annihilation operators. In \cite[Theorem 2.7]{cl},  the authors show that for any $\theta\in (-\frac{\pi}{2},\frac{\pi}{2})$ and $\rho>0$, $A_{\theta}$ satisfies that
\begin{equation}\label{atheta}
   A_{\theta} J_{m,n}(z,\rho)=-[(m+n)\cos \theta +\mi (m-n)\sin\theta]J_{m,n}(z,\rho),
\end{equation}
where $ J_{m,n}(z,\rho)$ is the so-called complex Hermite polynomials (or say: Hermite-Laguerre-It\^{o} polynomials) given by
\begin{equation}\label{itldefn}
  \begin{array}{ll}
  J_{0,0}(z,\rho)&=1,\\
  J_{m,n}(z,\rho)&=\rho^{m+n}(\partial^*)^m(\bar{\partial}^*)^n 1,\quad m,n\in \Nnum,
    \end{array}
\end{equation}
where $ (\partial^*\phi)(z)=-\frac{\partial}{\partial \bar{z}}\phi(z)+\frac{z }{\rho}\phi(z),\quad (\bar{\partial}^*\phi)(z)=-\frac{\partial}{\partial {z}}\phi(z)+\frac{\bar{z}}{\rho }\phi(z)$ for $\phi\in C^1_0(\Rnum^2)$ are the adjoint of the operators $\partial,\,\bar{\partial} $ respectively (the complex creation operator).
$$\set{((m!n!\rho^{m+n})^{-\frac12} J_{m,n}(z,\rho):\,m,n\in \Nnum}$$ is a complete orthonormal system \cite{cl,ito} of $L^2_\Cnum(\Cnum,\,\nu)$ with $\dif\nu=\frac{1}{\pi \rho}e^{-\frac{x^2+y^2}{\rho}}\dif x\dif y$. If $\rho=2$, we will often write $J_{m,n}(z)$ instead of $J_{m,n}(z,\rho)$.

The real Hermite polynomials $H_n$ are defined by the formula\footnote{Note that $H_n(x)=\frac{(-1)^n}{n!}  e^{x^2/2}\frac{\dif^n}{\dif x^n}e^{-x^2/2}$ in \cite{Nua,nup, shg} and $H_n(x)=\frac{(-1)^n}{\sqrt{n!}} e^{x^2/2}\frac{\dif^n}{\dif x^n}e^{-x^2/2}$ in \cite{str}, here we use the definition in \cite{cl,guo,np}.} \cite[p157]{guo}  $$H_n(x)=(-1)^n e^{x^2/2}\frac{\dif^n}{\dif x^n}e^{-x^2/2},\,n=1,2,\dots.$$
The following property gives the fundamental relation between the real and the complex Hermite polynomials \cite[Corollary 2.8]{cl} by the authors, which plays the important role of the proofs of the main theorems of the present paper (see Theorem~\ref{mth00}-\ref{co2} below ).
\begin{prop}\label{2dim2}
Let $z=x + \mi y$ with $x,y\in \Rnum$. Then the real and the complex Hermite polynomials satisfy that
\begin{equation}\label{h2j}
    \begin{array}{ll}
      J_{m,l-m}(z) = \sum\limits_{k=0}^{l}{\mi^{l-k}}\sum_{r+s=k}{m \choose r}{l-m \choose s}(-1)^{l-m-s} H_k(x)H_{l-k}(y),\\
        H_k(x)H_{l-k}(y) = \frac{\mi^{l-k}}{2^{l}}\sum\limits_{m=0}^l \sum_{r+s=m}{k \choose r}{l-k \choose s}(-1)^{s} J_{m,l-m}(z).
    \end{array}
 \end{equation}
 Thus, both the class $\set{ J_{k,l}(z):\,k+l=n}$ and the class $\set{ H_k(x)H_{l}(y):\,k+l=n }$ generate the same linear subspace of $L^2_{\Cnum}(\Cnum,\nu)$.
\end{prop}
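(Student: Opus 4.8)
The plan is to realize both the complex and the real Hermite polynomials as images of the constant function $1$ under commuting creation operators, and then to pass between the two families by a single binomial expansion. Writing $a_x^*:=x-\frac{\partial}{\partial x}$ and $a_y^*:=y-\frac{\partial}{\partial y}$ for the one-dimensional real creation operators, the Rodrigues formula for $H_n$ gives the recursion $H_{n+1}=a_x^*H_n$, hence $(a_x^*)^k(a_y^*)^{l-k}1=H_k(x)H_{l-k}(y)$. First I would substitute the Wirtinger derivatives $\frac{\partial}{\partial\bar z}=\frac12(\frac{\partial}{\partial x}+\mi\frac{\partial}{\partial y})$ and $\frac{\partial}{\partial z}=\frac12(\frac{\partial}{\partial x}-\mi\frac{\partial}{\partial y})$ into the definitions of $\partial^*,\bar\partial^*$ (with $\rho=2$) to obtain the factorizations
\begin{equation*}
\partial^*=\tfrac12(a_x^*+\mi a_y^*),\qquad \bar\partial^*=\tfrac12(a_x^*-\mi a_y^*),
\end{equation*}
equivalently $a_x^*=\partial^*+\bar\partial^*$ and $a_y^*=-\mi(\partial^*-\bar\partial^*)$. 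Since $a_x^*$ and $a_y^*$ act on different variables they commute (equivalently $[\partial^*,\bar\partial^*]=0$, which can be checked from the canonical commutation relations), so all four operators may be manipulated as commuting scalars inside binomial expansions.

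For the first identity, I would start from $J_{m,l-m}(z)=2^{l}(\partial^*)^m(\bar\partial^*)^{l-m}1=(a_x^*+\mi a_y^*)^m(a_x^*-\mi a_y^*)^{l-m}1$, expand each factor by the binomial theorem letting $r$ (resp.\ $s$) denote the power of $a_x^*$ in the first (resp.\ second) factor, and collect terms. The monomial $(a_x^*)^{r+s}(a_y^*)^{l-r-s}1=H_{r+s}(x)H_{l-r-s}(y)$ has $x$-degree $k:=r+s$, and the accompanying scalar $\mi^{m-r}(-\mi)^{l-m-s}$ simplifies to $(-1)^{l-m-s}\mi^{l-k}$, which reproduces the stated coefficient exactly. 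Grouping by $k$ yields the first line of~\eqref{h2j}. For the second identity I would run the same computation in the opposite direction, starting from $H_k(x)H_{l-k}(y)=(\partial^*+\bar\partial^*)^k\bigl(-\mi(\partial^*-\bar\partial^*)\bigr)^{l-k}1$, expanding with $r,s$ now the powers of $\partial^*$, and using $(\partial^*)^m(\bar\partial^*)^{l-m}1=2^{-l}J_{m,l-m}(z)$; the scalar $(-\mi)^{l-k}(-1)^{l-k-s}$ collapses to $\mi^{l-k}(-1)^{s}$, giving the second line of~\eqref{h2j}.

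The final assertion then follows at once: the first formula exhibits every $J_{k,l-k}$ as a linear combination of the products $H_i(x)H_{l-i}(y)$ with $i+(l-i)=l$, and the second exhibits every such product as a combination of the $J_{k,l-k}$, so the two $(l+1)$-element families span the same homogeneous subspace of $L^2_{\Cnum}(\Cnum,\nu)$; in fact the two expansions are mutually inverse linear maps.

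The conceptual content---the operator factorization---is short, so the real work is purely bookkeeping: keeping the factors of $\mi$ and the signs $(-1)^{(\cdots)}$ straight through two binomial expansions so that the collected coefficients match the precise form printed in~\eqref{h2j}. That sign and power-of-$\mi$ reconciliation is the only place an error is likely to creep in; a secondary technical point is that the constant $1$ lies outside $C^1_0(\Rnum^2)$, so one should read $\partial^*,\bar\partial^*$ as acting formally on polynomials, where all the manipulations are legitimate.
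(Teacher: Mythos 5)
Your proof is correct, and I verified the computations line by line: the factorizations $\partial^*=\tfrac12(a_x^*+\mi a_y^*)$, $\bar\partial^*=\tfrac12(a_x^*-\mi a_y^*)$ follow directly from the definition (\ref{itldefn}) with $\rho=2$ and the Wirtinger substitution; the powers of $2$ cancel exactly as you claim, so $J_{m,l-m}(z)=(a_x^*+\mi a_y^*)^m(a_x^*-\mi a_y^*)^{l-m}1$; and both scalar simplifications check out, namely $\mi^{m-r}(-\mi)^{l-m-s}=\mi^{l-k}(-1)^{l-m-s}$ with $k=r+s$, and $(-\mi)^{l-k}(-1)^{l-k-s}=\mi^{l-k}(-1)^{s}$, reproducing both lines of (\ref{h2j}) precisely. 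Note, however, that the paper itself does not prove this proposition at all: it is imported verbatim from the authors' earlier work \cite[Corollary 2.8]{cl} on eigenfunctions of the complex Ornstein--Uhlenbeck operator, and no argument is reproduced here. So your derivation is a genuinely self-contained alternative, and a natural one, since the paper already defines $J_{m,n}$ through the creation operators $(\partial^*)^m(\bar\partial^*)^n1$; what your route buys is that the whole two-way dictionary between $\set{J_{k,l-k}}$ and $\set{H_k(x)H_{l-k}(y)}$ reduces to two binomial expansions of commuting operators, with no appeal to the spectral theory of $A_\theta$ developed in \cite{cl}. Your closing caveat is also handled consistently with the paper's own usage: the definition (\ref{itldefn}) already applies $\partial^*,\bar\partial^*$ to the constant function $1$, so reading them as formal operators on polynomials is exactly what the paper does. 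One small remark: your parenthetical claim that the two transition matrices are mutually inverse uses the linear independence of the products $H_k(x)H_{l-k}(y)$, $k=0,\dots,l$ (which holds, by their orthogonality in $L^2_{\Cnum}(\Cnum,\nu)$), but the span equality asserted in the proposition needs only the two expansions themselves, so nothing in your argument depends on it.
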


Equality (\ref{atheta}) means that $J_{m,n}(z,\rho)$ is the eigenfunctions of $A_{\theta}$ for all $\theta\in(-\frac{\pi}{2},\frac{\pi}{2})$, and the only difference is the eigenvalue. Especially when $\theta=0$, the normal Ornstein-Uhlenbeck operator $A_{\theta}$ degenerates to a symmetric operator $A_0=\frac{\rho}{2}(\frac{\partial^2}{\partial x^2}+\frac{\partial^2}{\partial y^2})-x\frac{\partial}{\partial x}-y\frac{\partial}{\partial y}$, and it follows from (\ref{atheta}) that both the real part and the imaginary part of $J_{m,n}(z,\rho)$ are the eigenfunctions of $A_0$ with respect to the same eigenvalue $-(m+n)$. Moreover, since \cite[Proposition A.6]{cl}
\begin{align}
   \overline{J_{m,n}(z,\rho)}=J_{n,m}(z,\rho),\label{jbar}
\end{align}
we have that when $m\neq n$, $E_{\nu}[J_{m,n}(z,\rho)^2]=E_{\nu}[J_{m,n}(z,\rho)\overline{J_{n,m}(z,\rho)}]=0 $ which implies that the real part and the imaginary part of $J_{m,n}(z,\rho)$ are orthogonal and having the same norm in $L^2_\Cnum(\Cnum,\,\nu)$. We conclude it as a proposition.
\begin{prop}\label{2dims}
   Let $J_{m,n}(z,\rho)=f+\mi g$, then \begin{align*}
      A_0f&=-(m+n)f,\quad A_0g=-(m+n)g.  \end{align*}
      If $m\neq n$ then $f,\,g$ satisfy
   \begin{align*}
      \norm{f}_{L^2_\Cnum(\nu)}&=\norm{g}_{L^2_\Cnum(\nu)},\quad E_{\nu}[fg]=0.
   \end{align*}
\end{prop}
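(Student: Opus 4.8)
The plan is to derive both claims directly from the two facts already in hand: the eigenvalue identity (\ref{atheta}) specialized to $\theta=0$, and the conjugation rule (\ref{jbar}), together with the orthonormality of the system $\{(m!n!\rho^{m+n})^{-1/2}J_{m,n}(z,\rho)\}$ in $L^2_\Cnum(\Cnum,\nu)$.

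For the eigenfunction statement, I would first note that setting $\theta=0$ in (\ref{atheta}) gives $A_0 J_{m,n}(z,\rho)=-(m+n)J_{m,n}(z,\rho)$, since $\cos 0=1$ and $\sin 0=0$. The key observation is then that the differential operator $A_0$ displayed just above the proposition has real coefficients, so it carries real-valued functions to real-valued functions. Writing $J_{m,n}=f+\mi g$ with $f,g$ real, we therefore have $A_0 J_{m,n}=A_0 f+\mi A_0 g$ with both $A_0 f$ and $A_0 g$ real-valued. Comparing this with $-(m+n)(f+\mi g)=-(m+n)f-\mi(m+n)g$ and matching real and imaginary parts yields $A_0 f=-(m+n)f$ and $A_0 g=-(m+n)g$ at once.

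For the norm and orthogonality statement when $m\neq n$, the central step is to show $E_{\nu}[J_{m,n}(z,\rho)^2]=0$. Using (\ref{jbar}) in the form $J_{m,n}=\overline{J_{n,m}}$, I would rewrite $E_{\nu}[J_{m,n}^2]=E_{\nu}[J_{m,n}\overline{J_{n,m}}]=\innp{J_{m,n},J_{n,m}}_{L^2_\Cnum(\nu)}$. Since $m\neq n$, the multi-indices $(m,n)$ and $(n,m)$ are distinct, so orthonormality of the complex Hermite system forces this inner product to vanish. Expanding $J_{m,n}^2=(f+\mi g)^2=(f^2-g^2)+2\mi fg$ and using that $f,g$ are real, so that both $E_{\nu}[f^2-g^2]$ and $E_{\nu}[fg]$ are real, the vanishing of the real and imaginary parts separately gives $\norm{f}_{L^2_\Cnum(\nu)}^2=\norm{g}_{L^2_\Cnum(\nu)}^2$ and $E_{\nu}[fg]=0$, as claimed.

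I do not anticipate any genuine obstacle here: once (\ref{atheta}) and (\ref{jbar}) are available the argument is essentially bookkeeping. The only points requiring a little care are the harmless but essential remark that $A_0$ has real coefficients (which is precisely what lets one split the eigenvalue equation into its real and imaginary parts), and the identification of $E_{\nu}[u\bar v]$ with the Hermitian inner product $\innp{u,v}_{L^2_\Cnum(\nu)}$ so that orthogonality of $J_{m,n}$ and $J_{n,m}$ can be invoked.
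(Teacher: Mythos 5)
Your proposal is correct and follows essentially the same route as the paper: specialize (\ref{atheta}) to $\theta=0$ and split into real and imaginary parts using the realness of $A_0$, then use (\ref{jbar}) together with orthogonality of the complex Hermite system to get $E_{\nu}[J_{m,n}(z,\rho)^2]=E_{\nu}[J_{m,n}(z,\rho)\overline{J_{n,m}(z,\rho)}]=0$ and read off both conclusions from the expansion of $(f+\mi g)^2$. The points you flag as needing care (real coefficients of $A_0$, identifying $E_{\nu}[u\bar v]$ with the inner product) are exactly the implicit steps in the paper's argument, made explicit.
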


The above basic properties give a heuristic answer to the problem of the relation between real multiple integrals and complex multiple integrals. In fact, an infinite dimensional version of Proposition~\ref{2dims} is given by Theorem~\ref{pp2} below.

\subsection{Complex Gaussian isonormal process and complex Wiener-It\^{o} chaos}\label{sec222}
Before we describe the formulation of Wiener-It\^{o} chaos decomposition theorem for complex Gaussian isonormal process, let us recall the corresponding theory of real Gaussian isonormal process.

The standard approach to define the Wiener-It\^{o} chaos is using Hermite polynomials (please refer to \cite{ito2}, \cite[Definiton~1.1.1]{Nua} and \cite[Definiton~2.2.3]{np}, or see Definition~\ref{nott01} below), and an alternative standard (but equivalent by Proposiiton\ref{rmw}) way is using general polynomial vector spaces  (please refer to \cite[Definition 2.1]{jan}). Here we adopt the former.
\begin{dfn}\label{nott01}
For a fixed real separable Hilbert space $\FH$, an {\it isonormal Gaussian process} over $\FH$, $X=\set{X(h):\,h\in \FH}$ means that $X$ is a centered Gaussian family defined on some probability space $(\Omega, \mathcal{F}_0,P)$ and such that $E[X(g){X(h)}]=\innp{g,h}_{\FH}$ for every $g,h\in \FH$. If $\set{e_i:,\,i\ge 1}$ is a countable orthonormal basis of $\FH$ and $\set{\xi_i}$ is a sequence of i.i.d. standard normal random variables, then $X$ is uniquely determined in the sense of law by
\begin{equation}\label{xh}
   X(h)=\sum^{\infty}_{i=1}\innp{h,\,e_i}_{\FH}\xi_i.
\end{equation}
The $n$-th Wiener-It\^{o} chaos $\mathcal{H}_{n}(X)$ of $X$ is the closed linear subspace of the real $L^{2}(\Omega)$  generated by the random variable of the type $\set{H_n(X(h)),\,h\in \FH,\norm{h}=1}$ where $H_n$ is the $n$-th Hermite polynomial.
\end{dfn}
For a sequence $\mathbf{m}=\set{m_k}_{k=1}^{\infty}$ of nonnegative integrals with finite sum, we set $\abs{\mathbf{m}}=\sum_{k=1}^{\infty} m_k$ and $\mathbf{m}!=\prod_{k=1}^{\infty}  m_k!$ and define a Fourier-Hermite polynomial \cite{Nua,shg}
\begin{equation}\label{bfh}
   \mathbf{H}_{\mathbf{m}}:=\frac{1}{\sqrt{\mathbf{m}!}}\prod_{k=1}^{\infty} H_{m_k}(X(e_k)).
\end{equation}
Set $\xi_k=X(e_k)$ and $\xi=\set{\xi_k:k=1,2,\dots}$ and use the notation $\xi^{\mathbf{m}}:=\prod_{k=1}^{\infty}\xi_k^{m_k}$, then the right hand side of (\ref{bfh}) is exactly the wick product $:\xi^{\mathbf{m}}:$ (please refer to \cite[Theorem 3.15]{jan}).
The next two results are well-known for the real isonormal Gaussian process \cite{np,Nua}.
\begin{prop}{\bf(Wiener-It\^{o} chaos decomposition) }\begin{itemize}
   \item[\textup{i)}] The linear space generated by the class $\set{H_n(X(h)):\,n\ge 0, \norm{h}_{\FH}=1}$ is dense in $L^q(\Omega)$ for every $q\ge 1$.
   \item[\textup{ii)}] The space $L^2(\Omega,\sigma(X),P)$ can be decomposed into the infinite orthogonal sum of the subspace $\mathcal{H}_n(X)$, i.e., $L^2(\Omega)=\bigoplus_{n=0}^{\infty}\mathcal{H}_n(X)$.
\end{itemize}
\end{prop}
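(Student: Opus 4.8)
The crux is part i); once the Hermite variables are shown to be total in $L^2$, part ii) follows by combining this density with the mutual orthogonality of the spaces $\mathcal H_n(X)$. The plan for i) is to pass through the (Wick-)exponential random variables $\mathcal E(h):=\exp\!\big(X(h)-\tfrac12\norm{h}_\FH^2\big)$: I would first show that each $\mathcal E(h)$ lies in the closed linear span of $\set{H_n(X(u)):\norm{u}_\FH=1}$, and then show that $\set{\mathcal E(h):h\in\FH}$ is total in $L^2(\Omega,\sigma(X),P)$.

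For the first step I would invoke the generating identity $\exp(tx-\tfrac12 t^2)=\sum_{n\ge 0}\frac{t^n}{n!}H_n(x)$, valid for the normalization $H_n(x)=(-1)^n e^{x^2/2}\frac{\dif^n}{\dif x^n}e^{-x^2/2}$ used here. Applying it with $x=X(h/\norm{h}_\FH)$ and $t=\norm{h}_\FH$ gives $\mathcal E(h)=\sum_{n\ge 0}\frac{\norm{h}_\FH^n}{n!}H_n\!\big(X(h/\norm{h}_\FH)\big)$; since $X(h/\norm{h}_\FH)$ is a standard normal and $E[H_n(\cdot)^2]=n!$, this series converges in $L^2(\Omega)$, so $\mathcal E(h)$ --- hence $e^{X(h)}$ --- belongs to the closed span of the Hermite variables.

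For totality I would argue by duality. Suppose $Y\in L^2(\Omega,\sigma(X))$ satisfies $E[Y\,e^{X(h)}]=0$ for all $h$. Fixing $N$ and restricting to $h=\sum_{k=1}^N t_k e_k$, set $\psi_N:=E[Y\mid \sigma(X(e_1),\dots,X(e_N))]$, a function of the i.i.d.\ standard normals $\xi_k=X(e_k)$. The hypothesis reads $\int_{\Rnum^N}\psi_N(x)\,e^{\innp{t,x}}\,\dif\gamma_N(x)=0$ for every $t\in\Rnum^N$, where $\gamma_N$ is the standard Gaussian measure. The Laplace transform of the finite signed measure $\psi_N\,\dif\gamma_N$ extends holomorphically to $t\in\Cnum^N$ (the Gaussian tails dominate $e^{\innp{t,x}}$), so it vanishes identically, and evaluating along the imaginary axis shows the Fourier transform of $\psi_N\,\dif\gamma_N$ is zero; hence $\psi_N=0$ a.e. Letting $N\to\infty$ and using $\sigma(X)=\bigvee_N \sigma(X(e_1),\dots,X(e_N))$ together with the martingale convergence theorem yields $Y=\lim_N \psi_N=0$. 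Thus the span of the exponentials is dense in $L^2$, and by the first step so is the span of the Hermite variables.

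The extension to general $q\ge 1$ and the deduction of ii) are then routine. For $L^q$ density one reduces to finite dimension: $E[F\mid \calF_N]\to F$ in $L^q$ by the $L^q$ martingale convergence theorem (with $\calF_N=\sigma(X(e_1),\dots,X(e_N))$), and polynomials are dense in $L^q(\Rnum^N,\gamma_N)$; each polynomial in $X(e_1),\dots,X(e_N)$ is a finite linear combination of the Fourier--Hermite polynomials $\mathbf H_{\mathbf m}$, which lie in the linear span of $\set{H_n(X(u)):\norm{u}_\FH=1}$ by polarization in $u$. For ii) the orthogonality $E[H_m(X(g))H_n(X(h))]=\delta_{mn}\,n!\,\innp{g,h}_\FH^n$ for unit $g,h$ --- obtained by expanding $E[\mathcal E(sg)\mathcal E(th)]=\exp(st\innp{g,h}_\FH)$ in powers of $s,t$ and matching coefficients --- gives $\mathcal H_m(X)\perp\mathcal H_n(X)$ for $m\neq n$; combined with the density from i), this is exactly $L^2(\Omega,\sigma(X),P)=\bigoplus_{n\ge 0}\mathcal H_n(X)$. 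The main obstacle is the totality step: justifying the holomorphic extension and the Fourier-uniqueness needed to conclude $\psi_N=0$, which is precisely where the Gaussian structure is essential.
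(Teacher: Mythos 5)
Your proposal is correct, and since the paper gives no proof of this proposition at all---it is stated as a well-known preliminary with a pointer to the monographs of Nualart and Nourdin--Peccati---the right comparison is with those references, where your argument is precisely the standard one. Every step checks out: the $L^2$-convergent generating-function expansion of $\exp\big(X(h)-\tfrac12\norm{h}_{\FH}^2\big)$, the totality of exponentials via holomorphic extension of the Laplace transform and Fourier uniqueness, the polarization step reducing products of Hermite polynomials (hence all polynomials in finitely many $X(e_k)$) to linear combinations of single variables $H_n(X(u))$ with $\norm{u}_{\FH}=1$, and the orthogonality of the chaoses obtained by matching coefficients in $E[\mathcal{E}(sg)\mathcal{E}(th)]=\exp\big(st\innp{g,h}_{\FH}\big)$.
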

\begin{prop}\label{rmw}{\bf (Real multiple Wiener-It\^{o} integral)}\begin{itemize}
   \item[\textup{i)}] For any $m\ge 1$ the random variables $\set{\mathbf{H}_{\mathbf{m}}:\,\abs{\mathbf{m}}=m}$ form a complete orthonormal system in $\mathcal{H}_m(X)$.
   \item[\textup{ii)}] The linear mapping $\mathcal{I}_m(\mathrm{symm}(\bigotimes_{k=0}^{\infty}e_k^{\otimes m_i}))=\sqrt{\mathbf{m}!}\mathbf{H}_{\mathbf{m}}$ provides an isometry from the tensor product $\FH^{\odot m}$, equipped with the norm $\sqrt{m!}\norm{\cdot}_{\FH^{\otimes m}}$, onto $\mathcal{H}_m(X)$. For any $f\in \FH^{\odot m}$, $\mathcal{I}_{m}(f)$ is called the real multiple Wiener-It\^{o} integral of $f$ with respect to $X$ (please refer to \cite{np1} or \cite{nup}).
\end{itemize}
\end{prop}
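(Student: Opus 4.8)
The plan is to establish part (i) in two stages, orthonormality and completeness, and then read off part (ii) from a single tensor-norm computation. Throughout I write $\xi_k = X(e_k)$; by the defining relation $E[X(g)X(h)] = \innp{g,h}_{\FH}$ together with $\innp{e_j,e_k}_{\FH} = \delta_{jk}$, the family $\set{\xi_k}$ is a sequence of i.i.d. standard normal variables, and $\mathbf{H}_{\mathbf{m}} = \frac{1}{\sqrt{\mathbf{m}!}}\prod_k H_{m_k}(\xi_k)$.

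For orthonormality I would reduce to the one-dimensional relation $E[H_p(\xi)H_q(\xi)] = p!\,\delta_{pq}$ for a single standard normal $\xi$, which follows from the Rodrigues-type definition of $H_n$ by repeated integration by parts. Since the $\xi_k$ are independent and $\mathbf{H}_{\mathbf{m}}$ is a product over $k$, the expectation $E[\mathbf{H}_{\mathbf{m}}\mathbf{H}_{\mathbf{m}'}]$ factors across $k$ and equals $\frac{1}{\sqrt{\mathbf{m}!\,\mathbf{m}'!}}\prod_k m_k!\,\delta_{m_k m_k'} = \delta_{\mathbf{m}\mathbf{m}'}$. Hence $\set{\mathbf{H}_{\mathbf{m}} : \abs{\mathbf{m}} = m}$ is orthonormal, and Fourier--Hermite polynomials of different total degree are orthogonal too.

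The engine for completeness is the generating identity $\sum_{n\ge 0}\frac{t^n}{n!}H_n(x) = e^{tx - t^2/2}$. For a unit vector $h$ I would write $X(h) = \sum_i \innp{h,e_i}_{\FH}\xi_i$, use $\sum_i \innp{h,e_i}_{\FH}^2 = 1$ to factor $e^{tX(h)-t^2/2} = \prod_i e^{(t\innp{h,e_i}_{\FH})\xi_i - (t\innp{h,e_i}_{\FH})^2/2}$, expand each factor, and match the coefficient of $t^m$ to obtain
\[
H_m(X(h)) = m!\sum_{\abs{\mathbf{m}}=m}\frac{1}{\sqrt{\mathbf{m}!}}\Big(\prod_i \innp{h,e_i}_{\FH}^{m_i}\Big)\mathbf{H}_{\mathbf{m}}.
\]
This places every generator $H_m(X(h))$ of $\mathcal{H}_m(X)$ inside the closed span of $\set{\mathbf{H}_{\mathbf{m}} : \abs{\mathbf{m}}=m}$. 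The reverse inclusion, that each individual $\mathbf{H}_{\mathbf{m}}$ lies in $\mathcal{H}_m(X)$, is the step I expect to cost the most care. I would argue by polarization: viewing the Wick exponential $\exp(X(h)-\tfrac12\norm{h}^2_{\FH})$ for a finitely supported $h=\sum_k c_k e_k$ as a power series whose homogeneous degree-$m$ part is both $\frac{1}{m!}\norm{h}^m_{\FH}H_m(X(h/\norm{h}_{\FH}))\in\mathcal{H}_m(X)$ and $\sum_{\abs{\mathbf{m}}=m}\frac{c^{\mathbf{m}}}{\sqrt{\mathbf{m}!}}\mathbf{H}_{\mathbf{m}}$, one sees the coefficient of $c^{\mathbf{m}}$ is a constant multiple of $\mathbf{H}_{\mathbf{m}}$; extracting that coefficient is a finite-difference/derivative operation in the scalars $c_k$, which preserves the closed subspace $\mathcal{H}_m(X)$. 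Combined with orthonormality, this yields completeness of the system in $\mathcal{H}_m(X)$.

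For part (ii) I would first compute the tensor norm. Because the $e_k$ are orthonormal, the $\frac{m!}{\mathbf{m}!}$ distinct rearrangements of $\bigotimes_k e_k^{\otimes m_k}$ are mutually orthogonal unit tensors, so symmetrizing gives $\norm{\mathrm{symm}(\bigotimes_k e_k^{\otimes m_k})}_{\FH^{\otimes m}}^2 = \frac{\mathbf{m}!}{m!}$, and distinct multi-indices produce tensors supported on disjoint index multisets, whence these symmetric tensors form an orthogonal basis of $\FH^{\odot m}$. Under the scaled norm $\sqrt{m!}\,\norm{\cdot}_{\FH^{\otimes m}}$ each such tensor has norm $\sqrt{\mathbf{m}!}$, matching $\norm{\sqrt{\mathbf{m}!}\,\mathbf{H}_{\mathbf{m}}}_{L^2(\Omega)} = \sqrt{\mathbf{m}!}$ from part (i). Thus $\mathcal{I}_m$ sends an orthogonal basis of $\FH^{\odot m}$ to the orthogonal family $\set{\sqrt{\mathbf{m}!}\,\mathbf{H}_{\mathbf{m}}}$ while preserving every norm, so it extends by linearity and continuity to an isometry, and its surjectivity onto $\mathcal{H}_m(X)$ is precisely the completeness established in part (i).
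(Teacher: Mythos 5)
Your proposal is correct, but it cannot be "the same as the paper's proof" for a simple reason: the paper offers no proof of Proposition~\ref{rmw} at all, stating it as well-known and citing the monographs of Nualart and Nourdin--Peccati. The closest in-paper object of comparison is the proof of the complex analogue, Proposition~\ref{ppp1}, which follows the polynomial-class route of Nualart's book: one introduces the closure $\mathcal{P}$ of polynomial random variables of bounded degree, identifies it with a finite direct sum of chaoses via orthogonality relations, and only then reads off that the Fourier--Hermite polynomials form a basis; the isometry statement then rests on the norm identity $\norm{\mathrm{symm}(\bigotimes_k e_k^{\otimes m_k})}_{\FH^{\otimes m}}^2=\mathbf{m}!/m!$, which you compute in exactly the same way. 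Your route to part (i) is genuinely different: you use the Hermite generating function $e^{tx-t^2/2}=\sum_n t^n H_n(x)/n!$ in both directions --- once to expand a generator $H_m(X(h))$ in the $\mathbf{H}_{\mathbf{m}}$'s, and once (through the Wick exponential and coefficient extraction in finitely many scalars $c_k$) to place each $\mathbf{H}_{\mathbf{m}}$ inside $\mathcal{H}_m(X)$ --- so that both inclusions follow directly from Definition~\ref{nott01} of $\mathcal{H}_m(X)$ as a closed span, with no appeal to density of polynomials in $L^2(\Omega)$. What your approach buys is a self-contained and explicit argument with concrete expansion coefficients; what the polynomial-class approach buys is a framework that transfers verbatim to the bi-graded complex setting, where monomials $z^r\bar z^s$ replace powers and the identity (\ref{singlenomial}) plays the role of your generating function, which is why the paper adopts it for Proposition~\ref{ppp1}. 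Two points you should make explicit, though both are routine: the termwise treatment of the infinite product and the matching of $t^m$-coefficients need $L^2$ convergence, which follows from the multinomial identity
\begin{equation*}
\sum_{\abs{\mathbf{m}}=m}\frac{m!}{\mathbf{m}!}\prod_i \innp{h,e_i}_{\FH}^{2m_i}=\norm{h}_{\FH}^{2m}=1,
\end{equation*}
and in the coefficient-extraction step you should say that $h$ is taken finitely supported, so that the degree-$m$ homogeneous part is a polynomial in finitely many variables with coefficients recoverable by finitely many evaluations, all of which lie in the closed subspace $\mathcal{H}_m(X)$.
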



Now, we turn to the definitions of complex isonormal process, which stem from Ito's work on complex multiple integrals essentially \cite{ito}. But we narrate them in the terminology appeared in \cite{jan} and \cite{Nua}.

Let $\set{ \eta_i:\,i\ge 1}$ be an independent copy of $\set{\xi_i}$ on some probability space $(\Omega, \mathcal{F},P)$, then $Y=\set{Y(h):\,h\in \FH}$ satisfying
\begin{align}\label{yh}
   Y(h)=\sum^{\infty}_{i=1}\innp{h,\,e_i}_{\FH}\eta_i
  \end{align}
  is an independent copy of the isonormal Gaussian process $X$ over $\FH$. Then we complexify $\FH$ and $L^{2}(\Omega)$ in the usual way and denote by $\FH_{\Cnum}$ and $L_{\Cnum}^{2}(\Omega)$ respectively.
Suppose that $\FH_{\Cnum}\ni \mathfrak{h}=f+\mi g$ with $f,g\in \FH$, we write
\begin{equation}\label{xh2}
   X_{\Cnum}(\mathfrak{h}):=X(f)+\mi X(g)=\sum_{i=1}^{\infty} \innp{\mathfrak{h},\,e_i}_{\FH_{\Cnum}}\xi_i,
\end{equation}
 which satisfies $E[X_{\Cnum}(\mathfrak{h})\overline{X_{\Cnum}(\mathfrak{h}_1)}]=\innp{\mathfrak{h},\,\mathfrak{h}_1}_{\FH_{\Cnum}}$, where $\mathfrak{h}_1\in\FH_{\Cnum}$.
The complexification of $\mathcal{H}_{n}(X)$ is given by
\begin{align}\label{hcx}
   \mathcal{H}^{\Cnum}_{n}(X):=\mathcal{H}_{n}(X)+\mi\mathcal{H}_{n}(X)=\set{F+\mi G:\,F,G\in \mathcal{H}_{n}(X)},
\end{align} which is the closed linear subspace of $L_{\Cnum}^{2}(\Omega)$ generated by the random variable of the type $\set{H_n(X(h)),\,h\in \FH,\norm{h}=1}$.
Clearly, $\set{e_i:i\ge 1}$ is still the basis of the complex Hilbert space $ \mathfrak{H}_{\Cnum}$.

\begin{dfn}\label{nott02}
 Let $\set{\zeta_i=\xi_i+\mi \eta_i:\,i\ge 1}$ be a sequence of i.i.d. symmetric complex normal random variables with variance $2$ on the probability space $(\Omega, \mathcal{F},P)$. Then
\begin{align}\label{zh}
   Z(\mathfrak{h})&=\frac{X_{\Cnum}(\mathfrak{h})+\mi Y_{\Cnum}(\mathfrak{h})}{\sqrt2}= \frac{1}{\sqrt2}\sum^{\infty}_{i=1}\innp{\mathfrak{h},\,e_i}_{\FH_{\Cnum}}\zeta_i,\quad \mathfrak{h}\in \FH_{\Cnum}
\end{align}
is called a {\it complex isonormal Gaussian process} over $ \FH_{\Cnum}$, which is a centered symmetric complex Gaussian family and such that
\begin{align*}
   E[ Z(\mathfrak{h})^2]=0,\quad E[Z(\mathfrak{g})\overline{Z(\mathfrak{h})}]=\innp{\mathfrak{g},\mathfrak{h}}_{\FH_{\Cnum}},\quad \forall \mathfrak{g},\mathfrak{h}\in \FH_{\Cnum}.
\end{align*}
\end{dfn}
\begin{rem}
Eq.(\ref{zh}) is exactly Janson's idea of the isometric complex Gaussian Hilbert space (see Example 1.9, Theorem 1.23 of \cite{jan} and \cite[p15]{jan}).
\end{rem}

\begin{dfn}\label{df27}
   For each $m,n\ge 0$, we write $\mathscr{H}_{m,n}(Z)$ to indicate the closed linear subspace of $L_{\Cnum}^2(\Omega)$ generated by the random variables of the type
   $J_{m,n}(Z(\mathfrak{h})), \mathfrak{h}\in \mathfrak{H}_{\Cnum},\norm{\mathfrak{h}}_{\mathfrak{H}_{\Cnum}}=\sqrt2$ where $J_{m,n}(z)$ is the complex Hermite polynomials given by (\ref{itldefn}). The space $\mathscr{H}_{m,n}(Z)$ is called the Wiener-It\^{o} chaos of degree of $(m,n)$ of  $Z$(or say: $(m,n)$-th Wiener-It\^{o} chaos of $Z$.)
\end{dfn}
\begin{rem}
Using the polynomial vector spaces, Janson defines complex Wiener-It\^{o} chaos $H_{\Cnum}^{:n:}$. Due to Proposition~\ref{ppp1}, it equals to $$\bigoplus_{k+l=n}\mathscr{H}_{k,l}(Z)$$
in our notation, but he does not decompose $H_{\Cnum}^{:n:}$ into the orthogonal direct sum of $\mathscr{H}_{k,l}(Z)$. 
\end{rem}
\begin{dfn}\label{jmn}
   Take a complete orthonormal system $\set{\mathfrak{e}_k}$ in $\mathfrak{H}_{\Cnum}$. For two sequences $\mathbf{m}=\set{m_k}_{k=1}^{\infty},\,\mathbf{n}=\set{n_k}_{k=1}^{\infty}$ of nonnegative integrals with finite sum, define a complex Fourier-Hermite polynomial
  \begin{equation}\label{fourier}
    \mathbf{J}_{\mathbf{m},\mathbf{n}}:=\prod_{k} \frac{1}{\sqrt{2^{m_k+n_k}m_k!n_k!}}J_{m_k,n_k}(\sqrt2 Z(\mathfrak{e}_k)).
  \end{equation}
\end{dfn}
\begin{rem} Let $\xi_k=Z(\mathfrak{e}_k)$ and we use the notation $\xi^{\mathbf{m}}\bar{\xi}^{\mathbf{n}}=\prod_k\xi_k^{m_k}\bar{\xi}_k^{n_k}$, then the right hand side of (\ref{fourier}) is exactly the Wick product $:\xi^{\mathbf{m}}\bar{\xi}^{\mathbf{n}}:$ ( please refer to Example 3.31 and Example 3.32 of \cite[p31]{jan}).

\end{rem}

In the following key proposition, the basis of $(m,n)$-th Wiener-It\^{o} chaos $\mathscr{H}_{m,n}(Z)$ and an isometry mapping from $\FH_{\Cnum}^{\odot m}\otimes \FH_{\Cnum}^{\odot n}$ onto $\mathscr{H}_{m,n}$ are given.
\begin{prop}\label{ppp1}
  Let $\mathbf{m},\,\mathbf{n}$ and $\mathfrak{e}_k$ be as in Definition~\ref{jmn}. Then:
      \begin{itemize}
    \item[\textup{(i)}]  For any $m,n\ge0$ the random variables
   \begin{equation}\label{eq4}
      \set{\mathbf{J}_{\mathbf{m},\mathbf{n}}:\abs{\mathbf{m}} =m,\,\abs{\mathbf{n}}=n }
   \end{equation}
   form a complete orthonormal system in $\mathscr{H}_{m,n}(Z)$.
      \item[\textup{(ii)}] The linear mapping
\begin{equation}\label{mulint}
   \mathscr{I}_{m,n}(\mathrm{symm}(\otimes_{k=1}^{\infty}\mathfrak{e}_k^{\otimes m_k})\otimes \mathrm{symm}(\otimes_{k=1}^{\infty}\bar{\mathfrak{e}}_k^{\otimes n_k}))=\sqrt{\mathbf{m}!\mathbf{n}!}\mathbf{J}_{\mathbf{m},\mathbf{n}}
\end{equation}
provides an isometry from the tensor product $\FH_{\Cnum}^{\odot m}\otimes \FH_{\Cnum}^{\odot n}$, equipped with the norm $\sqrt{m!n!}\norm{\cdot}_{\FH_{\Cnum}^{\otimes (m+n)}}$, onto the $(m,n)$-th Wiener-It\^{o} chaos $\mathscr{H}_{m,n}(Z)$.
  \end{itemize}
\end{prop}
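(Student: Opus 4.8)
The plan is to establish the two claims of Proposition~\ref{ppp1} by reducing everything to the already-known real theory (Proposition~\ref{rmw}) together with the algebraic relations between the one-variable complex and real Hermite polynomials from Proposition~\ref{2dim2}. The key structural observation is that the complex isonormal process $Z$ over $\FH_{\Cnum}$ is, by Definition~\ref{nott02}, built from two independent real isonormal processes. Thus I would first identify the ambient real Gaussian space: the pair $(X,Y)$ is a real isonormal process $W$ over the direct sum $\FH\oplus\FH$, and $L^2_{\Cnum}(\Omega)$ decomposes via the real Wiener--It\^{o} chaos of $W$. The whole proof is then a bookkeeping argument transporting the orthonormality and isometry already available for $W$ through the complex Hermite change-of-basis \eqref{h2j}.

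First I would prove (i). By \eqref{fourier} each $\mathbf{J}_{\mathbf{m},\mathbf{n}}$ is the Wick product $:\xi^{\mathbf{m}}\bar\xi^{\mathbf{n}}:$ with $\xi_k=Z(\mathfrak{e}_k)$, so I would compute the inner product $E[\mathbf{J}_{\mathbf{m},\mathbf{n}}\overline{\mathbf{J}_{\mathbf{m}',\mathbf{n}'}}]$. The single-variable calculation reduces to
\[
E_\nu\!\left[J_{m,n}(z)\overline{J_{m',n'}(z)}\right]=2^{m+n}m!\,n!\,\delta_{m,m'}\delta_{n,n'},
\]
which follows from \eqref{jbar} together with the stated complete-orthonormality of $\{(m!n!\rho^{m+n})^{-1/2}J_{m,n}\}$ in $L^2_{\Cnum}(\Cnum,\nu)$ at $\rho=2$. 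Multiplying over $k$ and matching against the normalization constants in \eqref{fourier} gives orthonormality of the family \eqref{eq4}. Completeness in $\mathscr{H}_{m,n}(Z)$ I would obtain by showing that every generator $J_{m,n}(Z(\mathfrak{h}))$ with $\norm{\mathfrak{h}}_{\FH_{\Cnum}}=\sqrt2$ lies in the closed span of \eqref{eq4}: expanding $\mathfrak{h}=\sum_k\innp{\mathfrak{h},\mathfrak{e}_k}\mathfrak{e}_k$ and applying a multilinear product formula for $J_{m,n}$ (the complex analogue of the Hermite product expansion, which on the real side underlies Proposition~\ref{rmw}) expresses $J_{m,n}(Z(\mathfrak{h}))$ as a finite linear combination of the $\mathbf{J}_{\mathbf{m},\mathbf{n}}$ with $\abs{\mathbf{m}}=m,\abs{\mathbf{n}}=n$.

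For (ii) I would simply observe that the symmetric tensors $\mathrm{symm}(\otimes_k\mathfrak{e}_k^{\otimes m_k})\otimes\mathrm{symm}(\otimes_k\bar{\mathfrak{e}}_k^{\otimes n_k})$, as $\abs{\mathbf{m}}=m$ and $\abs{\mathbf{n}}=n$ range over all admissible sequences, form an orthogonal basis of $\FH_{\Cnum}^{\odot m}\otimes\FH_{\Cnum}^{\odot n}$, and that under the norm $\sqrt{m!n!}\norm{\cdot}_{\FH_{\Cnum}^{\otimes(m+n)}}$ the vector indexed by $(\mathbf{m},\mathbf{n})$ has squared norm $\mathbf{m}!\,\mathbf{n}!$. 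Since $\mathscr{I}_{m,n}$ sends this vector to $\sqrt{\mathbf{m}!\mathbf{n}!}\,\mathbf{J}_{\mathbf{m},\mathbf{n}}$ and the target family is orthonormal by (i), $\mathscr{I}_{m,n}$ maps an orthogonal basis to an orthogonal basis with matching norms, hence extends to a surjective isometry onto $\mathscr{H}_{m,n}(Z)$. The main obstacle I anticipate is the completeness half of (i): making the complex product formula for $J_{m,n}(Z(\mathfrak{h}))$ precise enough to guarantee that no ``cross'' terms of type $(m',n')\ne(m,n)$ appear and that the expansion closes within a single chaos $\mathscr{H}_{m,n}(Z)$. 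Everything else is a careful but routine transcription of the real-variable computations through the relation \eqref{h2j}.
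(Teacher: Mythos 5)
Your orthonormality computation and your part (ii) are sound (part (ii) essentially coincides with the paper's own argument), but there is a genuine gap in part (i): you never show that the random variables $\mathbf{J}_{\mathbf{m},\mathbf{n}}$ actually \emph{belong} to $\mathscr{H}_{m,n}(Z)$. Recall that Definition~\ref{df27} defines $\mathscr{H}_{m,n}(Z)$ as the closed linear span of the single-variable polynomials $J_{m,n}(Z(\mathfrak{h}))$ with $\norm{\mathfrak{h}}_{\FH_{\Cnum}}=\sqrt2$; it is not at all obvious that a product such as $\prod_k J_{m_k,n_k}(\sqrt2 Z(\mathfrak{e}_k))$ over several distinct basis vectors lies in that span. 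Your multinomial expansion, even if fully justified, only yields the inclusion $\mathscr{H}_{m,n}(Z)\subseteq\overline{\mathrm{span}}\set{\mathbf{J}_{\mathbf{m},\mathbf{n}}:\abs{\mathbf{m}}=m,\,\abs{\mathbf{n}}=n}$; to conclude that the family is a complete orthonormal system \emph{in} $\mathscr{H}_{m,n}(Z)$ you also need the reverse inclusion, i.e., the membership statement. This is precisely the nontrivial half that the paper's proof is organized around: it introduces the polynomial spaces $\mathcal{P}_{m,n}$ and proves $\mathcal{P}_{m,n}=\bigoplus_{k\le m}\bigoplus_{l\le n}\mathscr{H}_{k,l}(Z)$ (using (\ref{singlenomial}) and independence), so that any element of $\mathcal{P}_{m,n}$ orthogonal to $\mathcal{P}_{m-1,n}\cup\mathcal{P}_{m,n-1}$ --- in particular each $\mathbf{J}_{\mathbf{m},\mathbf{n}}$ of top bidegree --- is forced to lie in $\mathscr{H}_{m,n}(Z)$. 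Without that, or a substitute (e.g.\ a polarization argument recovering each product $\prod_k J_{m_k,n_k}(\sqrt2 Z(\mathfrak{e}_k))$ from generators $J_{m,n}(Z(\mathfrak{h}))$ with $\mathfrak{h}$ supported on finitely many $\mathfrak{e}_k$, say by averaging over phases $a_k\mapsto e^{\mi\theta_k}a_k$), your proof does not establish the proposition as stated.

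Two smaller points. First, the multilinear expansion you invoke --- the complex analogue of $H_n(\sum_k a_k\xi_k)=\sum_{\abs{\mathbf{m}}=n}\frac{n!}{\mathbf{m}!}\prod_k a_k^{m_k}H_{m_k}(\xi_k)$ --- is indeed true and can be proved from the generating function of $J_{m,n}(\cdot,\rho)$, which factorizes over independent summands and is bihomogeneous, so no cross bidegrees $(m',n')\neq(m,n)$ appear; but you assume it rather than prove or cite it, and since you yourself flag it as the main obstacle, it would have to be carried out. Second, when $\mathfrak{h}$ has infinitely many nonzero coordinates, the expansion of $J_{m,n}(Z(\mathfrak{h}))$ is an infinite series rather than a ``finite linear combination'', so you additionally need an $L^2(\Omega)$-convergence argument (routine, but necessary for the closed-span conclusion).
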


Proof of Proposition~\ref{ppp1} is presented in Section~\ref{sec4}.
\begin{dfn}\label{imn}
For any $f\in \FH_{\Cnum}^{\odot m}\otimes \FH_{\Cnum}^{\odot n}$, we call $\mathscr{I}_{m,n}(f)$ the complex multiple Wiener-It\^{o} integral of $f$ with respect to $Z$ .
\end{dfn}

\begin{rem}
   Janson gives another complex multiple integrals $I_n$ from the viewpoint of Gaussian Hilbert space formally \cite[Theorem 7.52]{jan}. Actually, using the above Definition~\ref{jmn} and Proposition~\ref{ppp1}, his definition is only equivalent to a linear isometric mapping from $\bigoplus_{p+q=n}\FH_{\Cnum}^{\odot p}\otimes \FH_{\Cnum}^{\odot q}$ onto $\bigoplus_{p+q=n}\mathscr{H}_{p,q}(Z)$.
    In our opinion, (\ref{mulint}) matches the theory of It\^{o}'s complex multiple integrals \cite{ito} better (see (\ref{itomulitp}) and Remark~\ref{rm9} for the reason to call ``integrals").
\end{rem}


\begin{thm}{\bf (Complex Wiener-It\^{o} chaos decomposition)}\label{chaos}
\begin{itemize}
    \item[\textup{(i)}]The linear space generated by the class
    \begin{equation}\label{jmn2}
       \set{J_{m,n}(Z(\mathfrak{h})):\, m,n\ge 0, \mathfrak{h}\in \mathfrak{H}_{\Cnum},\norm{\mathfrak{h}}_{\mathfrak{H}_{\Cnum}}=\sqrt2}
    \end{equation}
   is dense in $L_{\Cnum}^q(\Omega,\sigma(X,Y),P)$ for every $q\in [1,\infty)$.
    \item[\textup{(ii)}] One has that $ L_{\Cnum}^2(\Omega,\sigma(X,Y),P)=\bigoplus^{\infty}_{m=0}\bigoplus^{\infty}_{n=0}\mathscr{H}_{m,n}.$
This means that every random variable $F\in  L_{\Cnum}^2(\Omega,\sigma(X,Y),P)$ admits a unique expansion of the type $F=\sum_{m=0}^\infty\sum_{n=0}^\infty F_{m,n}$, where $F_{m,n}\in \mathscr{H}_{m,n}, F_{0,0}=E[F]$ and the series converges in $ L_{\Cnum}^2(\Omega)$.
 \end{itemize}
\end{thm}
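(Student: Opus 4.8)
The plan is to derive both assertions from the corresponding \emph{real} Wiener--It\^{o} chaos decomposition, applied to the real isonormal Gaussian process $W$ over $\FH\oplus\FH$, which I identify with $\FH_{\Cnum}$ regarded as a real Hilbert space; note that $\sigma(X,Y)=\sigma(W)$. The bridge between the real and complex pictures is Proposition~\ref{2dim2}, which I apply coordinatewise. First I introduce real coordinates: fixing the complex orthonormal basis $\set{\mathfrak{e}_k}$ of $\FH_{\Cnum}$ as in Definition~\ref{jmn}, I set $a_k=\sqrt2\,\RE Z(\mathfrak{e}_k)$ and $b_k=\sqrt2\,\IM Z(\mathfrak{e}_k)$, so that $\sqrt2\,Z(\mathfrak{e}_k)=a_k+\mi b_k$. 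Writing $\mathfrak{e}_k=f_k+\mi g_k$ and using \eqref{zh} and \eqref{xh2}, a direct computation gives $a_k=X(f_k)-Y(g_k)$ and $b_k=X(g_k)+Y(f_k)$. Since $X$ and $Y$ are independent and $\norm{\mathfrak{e}_k}_{\FH_{\Cnum}}=1$, the family $\set{a_k,b_k}$ consists of i.i.d. standard real Gaussians, being $W$ evaluated along a real orthonormal basis of $\FH\oplus\FH$; in particular it generates $\sigma(W)=\sigma(X,Y)$. Hence, by the real chaos decomposition (Proposition~\ref{rmw} applied to $W$), the normalized products $\prod_k H_{p_k}(a_k)H_{q_k}(b_k)$ form a complete orthonormal system of the real $L^2(\Omega,\sigma(X,Y),P)$.

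For assertion (ii) I show that the whole family $\set{\mathbf{J}_{\mathbf{m},\mathbf{n}}}$ (over all $\mathbf{m},\mathbf{n}$) is a complete orthonormal system of $L_{\Cnum}^2(\Omega,\sigma(X,Y),P)$. Orthonormality across all total degrees is routine: the variables $\set{Z(\mathfrak{e}_k)}$ are independent (being jointly complex Gaussian with $E[Z(\mathfrak{e}_k)\overline{Z(\mathfrak{e}_l)}]=\delta_{kl}$ and $E[Z(\mathfrak{e}_k)Z(\mathfrak{e}_l)]=0$ by polarizing $E[Z(\mathfrak{h})^2]=0$), so $E[\mathbf{J}_{\mathbf{m},\mathbf{n}}\overline{\mathbf{J}_{\mathbf{m}',\mathbf{n}'}}]$ factorizes over $k$; each factor is computed from $\overline{J_{m',n'}}=J_{n',m'}$ together with the orthogonality of $\set{(m!n!2^{m+n})^{-1/2}J_{m,n}(\cdot,2)}$ in $L_{\Cnum}^2(\Cnum,\nu)$, and vanishes unless $\mathbf{m}=\mathbf{m}'$, $\mathbf{n}=\mathbf{n}'$. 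For completeness I apply Proposition~\ref{2dim2} to $z=\sqrt2\,Z(\mathfrak{e}_k)=a_k+\mi b_k$: for each $k$ the two mutually inverse formulas show that, degree by degree, the complex linear span of $\set{J_{m,n}(\sqrt2\,Z(\mathfrak{e}_k))}$ equals that of $\set{H_p(a_k)H_q(b_k)}$. Taking products over $k$, the complex linear span of $\set{\mathbf{J}_{\mathbf{m},\mathbf{n}}}$ coincides with that of $\set{\prod_k H_{p_k}(a_k)H_{q_k}(b_k)}$, which is dense by the first paragraph; hence $\set{\mathbf{J}_{\mathbf{m},\mathbf{n}}}$ is complete. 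Since by Proposition~\ref{ppp1}(i) each $\mathscr{H}_{m,n}$ is the closed span of $\set{\mathbf{J}_{\mathbf{m},\mathbf{n}}:\abs{\mathbf{m}}=m,\abs{\mathbf{n}}=n}$, and these subfamilies are mutually orthogonal and jointly complete, assertion (ii) follows.

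For assertion (i) I must recover \emph{all} polynomials, not just an $L^2$ frame. Specializing the second identity of Proposition~\ref{2dim2} to $k=l$ (so the $H_{l-k}(y)$ factor is trivial) yields $H_l(\RE Z(\mathfrak{h}))=2^{-l}\sum_{m=0}^{l}\binom{l}{m}J_{m,l-m}(Z(\mathfrak{h}))$, so each $H_l(\RE Z(\mathfrak{h}))$ lies in the linear span of $\set{J_{m,n}(Z(\mathfrak{h}))}$. As $\mathfrak{h}=f+\mi g$ ranges over $\norm{\mathfrak{h}}_{\FH_{\Cnum}}=\sqrt2$, one checks that $\RE Z(\mathfrak{h})=W\!\left(\tfrac1{\sqrt2}(f,-g)\right)$ realizes $W(u)$ for \emph{every} unit vector $u\in\FH\oplus\FH$. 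A standard polarization argument then shows that the linear span of $\set{H_l(W(u)):l\ge0,\norm{u}=1}$ is the space of all polynomials in $\set{W(u)}$, that is, all complex polynomials in $\set{X(h),Y(h)}$; by the real chaos decomposition these are dense in $L_{\Cnum}^q(\Omega,\sigma(X,Y),P)$ for every $q\in[1,\infty)$, which is assertion (i).

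I expect the main obstacle to be the completeness/density step rather than the orthonormality, which is a routine factorization. The delicate points are transferring the single--variable relation of Proposition~\ref{2dim2} to the infinite product structure while controlling that only finitely many indices are nonzero, supplying the polarization argument showing that the $J$--span already contains every polynomial, and verifying carefully that $\set{a_k,b_k}$ are genuinely i.i.d. standard Gaussians generating $\sigma(X,Y)$.
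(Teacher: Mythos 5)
Your proof is correct, but it takes a genuinely different route from the paper's. The paper deduces both parts in a few lines from Theorem~\ref{co2} and Corollary~\ref{ccc1}: part (i) because the class (\ref{jmn2}) and the class $\set{H_n(X(f)+Y(g)):\norm{f}^2_{\FH}+\norm{g}^2_{\FH}=1}$ span the same linear subspace --- this is the content of the two-way formulas (\ref{man010})--(\ref{man011}), whose proof requires the product formula of Theorem~\ref{mth00} and the determinant Lemma~\ref{lm01} --- and part (ii) from the real decomposition $L_{\Cnum}^2(\Omega,\sigma(W),P)=\bigoplus_n\mathcal{H}^{\Cnum}_n(W)$ combined with the gradation (\ref{eq3}), whose orthogonality input is Proposition~\ref{pp3}. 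You bypass Theorem~\ref{co2} altogether: you fix the basis $\set{\mathfrak{e}_k}$, apply Proposition~\ref{2dim2} coordinatewise to identify the linear span of the complex Fourier--Hermite polynomials $\mathbf{J}_{\mathbf{m},\mathbf{n}}$ with that of the real Fourier--Hermite basis of $W$ built from $a_k,b_k$, obtain orthonormality by factorizing over the independent coordinates $Z(\mathfrak{e}_k)$ (a special case of Proposition~\ref{pp3}), and then invoke Proposition~\ref{ppp1}(i) to assemble (ii); for (i) you need only the single containment coming from the $k=l$ specialization of Proposition~\ref{2dim2}, $H_l(\RE Z(\mathfrak{h}))=2^{-l}\sum_{m=0}^{l}\binom{l}{m}J_{m,l-m}(Z(\mathfrak{h}))$, together with the observation that $\RE Z(\mathfrak{h})$ realizes every $W(u)$ with $\norm{u}_{\FH\oplus\FH}=1$ --- at which point you could cite the real chaos decomposition (i) for $W$ directly and skip the polarization step. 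What each approach buys: yours is more elementary and self-contained (no matrix inversion, no determinant identity) and exhibits $\set{\mathbf{J}_{\mathbf{m},\mathbf{n}}}$ as a complete orthonormal system of the whole space, but it is basis-dependent and does not yield the finer degree-wise identity $\mathcal{H}^{\Cnum}_n(W)=\bigoplus_{k+l=n}\mathscr{H}_{k,l}(Z)$, which the paper's basis-free argument establishes simultaneously and uses later (notably in the proof of Theorem~\ref{pp2}). The one detail you should make explicit is the completeness of the system $\set{(f_k,-g_k),(g_k,f_k)}_k$ in $\FH\oplus\FH$ (equivalently, that $\sigma(a_k,b_k:k\ge1)$ equals $\sigma(X,Y)$ up to null sets): if $(u,v)$ is orthogonal to all of these vectors, then $u-\mi v$ is orthogonal to every $\mathfrak{e}_k$ in $\FH_{\Cnum}$, hence zero; this is short, but it is what legitimizes treating $\set{a_k,b_k}$ as a full Fourier--Hermite coordinate system for $W$.
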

One can give several different proofs of the above theorem along the line of \cite{np,Nua} or \cite{jan}. In Subsection~\ref{sec4_1}, we will give a simple proof using the connection between the real Wiener-It\^{o} chaos and the complex Wiener-It\^{o} chaos based on Theorem~\ref{co2}.

\section{Relation between real and complex Wiener-It\^{o} chaos}\label{sec3}

\subsection{A product formula and relation between real multiple integrals and complex multiple integrals}\label{sec23}

To establish the connection between real multiple integrals and complex multiple integrals, we need a third real  isonormal Gaussian process $W$ over  the Hilbert space direct sum of the spaces $\FH$ and $\FH$.

Suppose $h,f\in \FH$, then denote $(h,f)$ the Cartesian product (or say, the order pair) of $\FH$ and $\FH$. Denote by $\FH\oplus \FH$ the Hilbert space direct sum of the spaces $\FH$ and $\FH$ with the natural inner product (see \cite[p48]{rs}), i.e., for any $h_1,h_2,f_1,f_2\in\FH$,
   \begin{align*}
      \innp{(h_1,f_1),\,(h_2,f_2)}_{\FH\oplus \FH}=\innp{h_1,h_2}_{\FH}+\innp{f_1,\,f_2}_{\FH}.
   \end{align*}
With respect to this inner product, $\FH\oplus \FH$ is a Hilbert space. We write $W=\set{W(h,f):\,h,f\in \FH}$ the isonormal Gaussian process over $\FH\oplus \FH$ and denote by $\mathcal{H}_n(W)$ the $n$-th Wiener-It\^{o} chaos of $W$.

For any $0<\theta_{n}<\dots<\theta_0<\pi$, denote a $(n+1)\times (n+1)$ matrix
\begin{align}\label{matr}
   \tensor{M}&=\tensor{M}(\theta_{0},\,\dots,\,\theta_n)\nonumber \\
     &=\left[
\begin{array}{lllll}
     (\sin\theta_0)^n & \,\,{n \choose 1}(\sin{\theta_0})^{n-1}\cos\theta_0   & \dots  &\,\, {n \choose n-1}\sin{\theta_0}(\cos\theta_0)^{n-1}&\,\, (\cos\theta_0)^{n}\\
     (\sin\theta_1)^n &\,\, {n \choose 1}(\sin{\theta_1})^{n-1}\cos\theta_1   & \dots  &\,\, {n \choose n-1}\sin{\theta_1}(\cos\theta_1)^{n-1}&\,\, (\cos\theta_1)^{n}\\
         \hdotsfor{5}\\
     (\sin\theta_n)^n &\,\, {n \choose 1}(\sin{\theta_n})^{n-1}\cos\theta_n   & \dots  &\,\, {n \choose n-1}\sin{\theta_n}(\cos\theta_n)^{n-1}&\,\, (\cos\theta_n)^{n}
\end{array}
\right]
\end{align}

\begin{thm}\label{mth00}{\bf (A product formula of real multiple integrals)}
Let Definition~\ref{nott01},\,\ref{nott02},\,\ref{imn} prevail. Denote by $  \mathcal{H}_{k}(X)\mathcal{H}_{l}(Y)$ the closed linear subspace of $L^2(\Omega, \mathcal{F},P)$ generated by the random variables of the type
\begin{equation*}
   \set{H_{k}(X(f))H_{l}(Y(g)):\,\norm{f}_{\FH}=\norm{g}_{\FH}=1 }.
\end{equation*}
Suppose that $\norm{f}_{\FH}^2+\norm{g}_{\FH}^2=1$, then
    \begin{equation}\label{man000}
   H_n(X(f) +Y(g))=\sum_{l=0}^n {n\choose l} \norm{f}^l\norm{g}^{n-l} H_l\big(\frac{X(f)}{\norm{f}}\big)H_{n-l}\big(\frac{Y(g)}{\norm{g}}\big).
\end{equation}
Suppose that $\norm{f}_{\FH}=\norm{g}_{\FH}=1$, for any fixed $0<\theta_{n}<\dots<\theta_0<\pi$, then
    \begin{equation}\label{man001}
  {H}_{l}(X(f)){H}_{n-l}(Y(g))=\sum_{k} \tensor{M}^{-1}_{l,k} H_n\big(\cos\theta_k X(f)+\sin\theta_k Y(g)\big),
  \end{equation} where $\tensor{M}^{-1}_{l,k}$ is the $(l,k)$-entry of $\tensor{M}^{-1}$, the inverse of $\tensor{M}$ (see (\ref{matr})).
That is to say, $\mathcal{H}_n(W)$, the $n$-th Wiener-It\^{o} chaos of $W$ satisfies
    \begin{equation}\label{typ1}
   \mathcal{H}_{n}(W)=\bigoplus_{k+l=n}\mathcal{H}_{k}(X)\mathcal{H}_{l}(Y).
\end{equation}
\end{thm}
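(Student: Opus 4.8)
The plan is to prove the three displayed identities in order and then read off the chaos decomposition (\ref{typ1}) from the first two. The whole argument rests on two facts: since $X$ and $Y$ are independent isonormal processes, $X(f)$ and $Y(g)$ are independent centred Gaussians with variances $\norm{f}^2_{\FH}$ and $\norm{g}^2_{\FH}$; and the isonormal process over $\FH\oplus\FH$ can be realized as $W(h,f)=X(h)+Y(f)$, since by independence this has the required covariance $\innp{h_1,h_2}_{\FH}+\innp{f_1,f_2}_{\FH}$.

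First I would establish the addition formula (\ref{man000}) from the generating function $\sum_{n}\frac{t^n}{n!}H_n(w)=e^{tw-t^2/2}$. Writing $a=\norm{f}_{\FH}$, $b=\norm{g}_{\FH}$ (so $a^2+b^2=1$) and $u=X(f)/\norm{f}$, $v=Y(g)/\norm{g}$ (independent standard normals), the factorization $e^{t(au+bv)-t^2/2}=e^{tau-(ta)^2/2}\,e^{tbv-(tb)^2/2}$, which holds precisely because $a^2+b^2=1$, splits the generating function of $H_n(X(f)+Y(g))$ into the product of the generating functions of $H_l(u)$ and $H_{n-l}(v)$. Comparing the coefficients of $t^n$ on both sides yields (\ref{man000}).

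Next, for (\ref{man001}) I take $\norm{f}_{\FH}=\norm{g}_{\FH}=1$ and apply the same addition formula to each $H_n(\cos\theta_k X(f)+\sin\theta_k Y(g))$; the argument has unit variance because $\cos^2\theta_k+\sin^2\theta_k=1$, giving
\[
H_n(\cos\theta_k X(f)+\sin\theta_k Y(g))=\sum_{j=0}^n \binom{n}{j}(\cos\theta_k)^j(\sin\theta_k)^{n-j}H_j(X(f))H_{n-j}(Y(g)).
\]
The coefficient array is exactly the matrix $\tensor{M}$ of (\ref{matr}), so with $Q_k=H_n(\cos\theta_k X(f)+\sin\theta_k Y(g))$ and $P_j=H_j(X(f))H_{n-j}(Y(g))$ the system reads $Q_k=\sum_j \tensor{M}_{k,j}P_j$; inverting it produces (\ref{man001}). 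The crux is therefore the invertibility of $\tensor{M}$, which I expect to be the main (though not deep) obstacle. I would factor $\tensor{M}=\diag\big((\sin\theta_k)^n\big)\,V\,\diag\big(\binom{n}{j}\big)$, where $V_{k,j}=(\cot\theta_k)^j$ is a Vandermonde matrix; the only subtlety is to note that $\sin\theta_k\neq0$ on $(0,\pi)$ legitimizes extracting $(\sin\theta_k)^n$. Since $\cot$ is strictly decreasing on $(0,\pi)$ and the $\theta_k$ are distinct, the nodes satisfy $\cot\theta_0<\dots<\cot\theta_n$, so $V$ is nonsingular and hence so is $\tensor{M}$.

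Finally, (\ref{typ1}) follows by a double inclusion together with orthogonality of the summands. Every generator $H_n(W(h,f))=H_n(X(h)+Y(f))$ of $\mathcal{H}_n(W)$ lies in $\bigoplus_{k+l=n}\mathcal{H}_k(X)\mathcal{H}_l(Y)$ by (\ref{man000}); conversely each generator $H_l(X(f))H_{n-l}(Y(g))$ of $\mathcal{H}_l(X)\mathcal{H}_{n-l}(Y)$ is, by (\ref{man001}), a finite linear combination of the $H_n(W(\cos\theta_k f,\sin\theta_k g))\in\mathcal{H}_n(W)$. Orthogonality of $\mathcal{H}_k(X)\mathcal{H}_l(Y)$ and $\mathcal{H}_{k'}(X)\mathcal{H}_{l'}(Y)$ for $(k,l)\neq(k',l')$ with $k+l=k'+l'=n$ comes from independence of $X$ and $Y$, which factors the expectation of a product of generators, combined with the orthogonality of distinct real Wiener chaoses (here $k\neq k'$ forces the $X$-factor to vanish). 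Passing to closed linear spans upgrades these set inclusions to the asserted orthogonal direct-sum identity.
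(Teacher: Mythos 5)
Your proposal is correct, and its overall architecture matches the paper's: prove the Hermite addition formula, read (\ref{man001}) as an $(n+1)\times(n+1)$ linear system with coefficient matrix $\tensor{M}$, show $\tensor{M}$ is invertible, and deduce (\ref{typ1}) from the realization $W(h,f)=X(h)+Y(f)$ together with the two displayed identities. Where you genuinely diverge is in the two ingredients the paper imports from the literature. First, the paper quotes the rotation-invariance identity
\begin{equation*}
   H_n(x\cos \theta+y\sin\theta)=\sum_{l=0}^n {n\choose l} (\cos \theta)^l(\sin\theta)^{n-l} H_l(x)H_{n-l}(y)
\end{equation*}
from Malliavin and Stroock, whereas you derive it from the generating function $e^{tw-t^2/2}$, using that the Gaussian exponent splits exactly when $a^2+b^2=1$; same identity, but self-contained. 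Second, and more substantively, the paper establishes invertibility of $\tensor{M}$ by computing its determinant through a cited determinant identity (Problem 342 of Proskuryakov, the paper's Lemma 4.2), obtaining $\prod_{k}{n\choose k}\prod_{i<j}\sin(\theta_i-\theta_j)\neq0$; you instead factor $\tensor{M}=\diag\big((\sin\theta_k)^n\big)\,V\,\diag\big({n\choose j}\big)$ with $V$ the Vandermonde matrix in the nodes $\cot\theta_k$, which are pairwise distinct because $\cot$ is strictly monotone on $(0,\pi)$ and $\sin\theta_k\neq0$ there. Your factorization is more elementary and needs no external reference; it also recovers the paper's determinant, since $\cot\theta_j-\cot\theta_i=\sin(\theta_i-\theta_j)/(\sin\theta_i\sin\theta_j)$ and the powers of $\sin\theta_k$ cancel. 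Finally, your treatment of (\ref{typ1}) — double inclusion via the two formulas, plus orthogonality of $\mathcal{H}_{k}(X)\mathcal{H}_{l}(Y)$ for distinct $(k,l)$ obtained by factoring expectations through independence of $X$ and $Y$ and invoking orthogonality of distinct real chaoses — is exactly the argument the paper compresses into one sentence (its Proposition 4.2 plus the remark that $\norm{f\cos\theta_k}^2+\norm{g\sin\theta_k}^2=1$), only spelled out in full; making the orthogonality explicit is a small but genuine improvement in rigor, since the direct-sum assertion requires it.
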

\begin{thm}\label{co2}{\bf (The connection between real and complex Wiener-It\^{o} chaos)}\\
Let Definition~\ref{nott01},\ref{nott02},\ref{imn} prevail.
Suppose that $\norm{f}_{\FH}^2+\norm{g}_{\FH}^2=1$, $\norm{\tilde{f}}_{\FH}^2+\norm{\tilde{g}}_{\FH}^2=1$, then  for any fixed $\theta\in \Rnum$,
    \begin{align}\label{man010}
   H_n\big(X(f) +Y(g)\big)+\mi  H_n\big(X(\tilde{f}) +Y(\tilde{g})\big)\nonumber \\
   = \sum_{k=0}^n \, d_k \big(J_{k,n-k}(Z(\mathfrak{h}))+\mi J_{k,n-k}(Z(\tilde{\mathfrak{h}}))\big),
\end{align} where 
$\mathfrak{h}=\sqrt2 e^{\mi \theta}(f-\mi g),\,\tilde{\mathfrak{h}}=\sqrt2 e^{\mi \theta}(\tilde{f}-\mi \tilde{g})$,
and
\begin{align}\label{dk}
   d_k=\frac{1}{2^n}\sum_{r+s=k}(-1)^s \sum_{l=0}^n {n \choose l}{l \choose r}{n-l\choose s}(\cos\theta)^l(\mi \cdot\sin\theta)^{n-l}.
\end{align}

Suppose that $\FH_{\Cnum}\ni\mathfrak{h} $ with $\norm{\mathfrak{h}}_{\FH_{\Cnum}}=\sqrt{2}$, then
\begin{equation}\label{man011}
  J_{k,n-k}(Z(\mathfrak{h}))= \sum_{i=0}^n\tilde{c}_i H_n(X(f_i)+Y(g_i)),
\end{equation}
where 
$f_i+\mi g_i=\frac{1}{\sqrt2} e^{\mi \theta_i}\bar{\mathfrak{h}}$,
and
\begin{equation}\label{ci}
   \tilde{c}_i=\sum_{j=0}^n\tensor{M}^{-1}_{j,i}{\mi^{n-k}}\sum_{r+s=j}{k \choose r}{n-k \choose s}(-1)^{n-k-s}.
\end{equation}

That is to say, the complexification of $\mathcal{H}_n(W)$ satisfies
   \begin{equation}\label{eq3}
   \mathcal{H}^{\Cnum}_n(W):=\mathcal{H}_n(W)+\mi \mathcal{H}_n(W)=\bigoplus_{k+l=n}\mathscr{H}_{k,l}(Z).
\end{equation}
\end{thm}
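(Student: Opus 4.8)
\section*{Proof proposal for Theorem~\ref{co2}}

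The plan is to realize the complex Gaussian variable $Z(\mathfrak{h})$ as a rotation of a pair of independent real standard Gaussians built from $X$ and $Y$, and then to \emph{compose two dictionaries}: the finite-dimensional dictionary of Proposition~\ref{2dim2}, which relates $J_{k,n-k}$ to products $H_j(x)H_{n-j}(y)$ of real Hermite polynomials in the two real coordinates $x=\RE Z(\mathfrak{h})$, $y=\IM Z(\mathfrak{h})$, and the chaos addition formula of Theorem~\ref{mth00}, which relates products $H_lH_{n-l}$ to single Hermite polynomials $H_n$ of linear combinations of the arguments.

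First I would pin down the key identification. Writing $\mathfrak{h}=\sqrt2\,e^{\mi\theta}(f-\mi g)$ with $\norm{f}^2+\norm{g}^2=1$ and using the complex linearity of $Z$ together with Definition~\ref{nott02}, a direct computation gives $Z(\mathfrak{h})=e^{\mi\theta}(u+\mi v)$, where $u=X(f)+Y(g)$ and $v=Y(f)-X(g)$. Because $X,Y$ are independent and $\norm{f}^2+\norm{g}^2=1$, the pair $(u,v)$ consists of two independent standard real Gaussians, so $x=\RE Z(\mathfrak{h})=\cos\theta\,u-\sin\theta\,v$ and $y=\IM Z(\mathfrak{h})=\sin\theta\,u+\cos\theta\,v$ are again independent standard Gaussians and $E[\abs{Z(\mathfrak{h})}^2]=2$. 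This is exactly the normalization $\rho=2$ for which $J_{k,n-k}(z)=J_{k,n-k}(z,2)$, so Proposition~\ref{2dim2} applies verbatim to $J_{k,n-k}(Z(\mathfrak{h}))=J_{k,n-k}(x+\mi y)$.

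For (\ref{man010}), the direction $H\to J$, I would invert the rotation to get $u=\cos\theta\,x+\sin\theta\,y$, apply the addition formula (\ref{man000}) of Theorem~\ref{mth00} to expand $H_n(u)=H_n(\cos\theta\,x+\sin\theta\,y)=\sum_l\binom{n}{l}(\cos\theta)^l(\sin\theta)^{n-l}H_l(x)H_{n-l}(y)$, and then replace each product via the second identity of Proposition~\ref{2dim2}. Collecting the two nested binomial sums produces precisely the coefficients $d_k$ of (\ref{dk}); carrying the same computation for $(\tilde f,\tilde g)$ and adding $\mi$ times the second yields (\ref{man010}). For (\ref{man011}), the direction $J\to H$, I would write a general $\mathfrak{h}$ with $\norm{\mathfrak{h}}_{\FH_{\Cnum}}=\sqrt2$ as $\sqrt2(f-\mi g)$ (absorbing the phase into $f,g$), apply the first identity of Proposition~\ref{2dim2} to express $J_{k,n-k}(Z(\mathfrak{h}))$ as a combination of products $H_j(x)H_{n-j}(y)$, and then use the inversion formula (\ref{man001}) of Theorem~\ref{mth00} to turn each product into $\sum_i\tensor{M}^{-1}_{j,i}H_n(\cos\theta_i x+\sin\theta_i y)$. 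Since $\cos\theta_i x+\sin\theta_i y=X(f_i)+Y(g_i)$ with $f_i+\mi g_i=e^{\mi\theta_i}(f+\mi g)=\tfrac{1}{\sqrt2}e^{\mi\theta_i}\bar{\mathfrak{h}}$, regrouping the coefficients gives $\tilde c_i$ as in (\ref{ci}).

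Finally, the subspace identity (\ref{eq3}) follows by taking closed linear spans of the generators. Indeed $\mathcal{H}_n(W)$ is spanned by the $H_n(X(f)+Y(g))$ with $\norm{f}^2+\norm{g}^2=1$ (since $W(h,k)=X(h)+Y(k)$), while $\mathscr{H}_{k,n-k}(Z)$ is spanned by the $J_{k,n-k}(Z(\mathfrak{h}))$ with $\norm{\mathfrak{h}}_{\FH_{\Cnum}}=\sqrt2$ by Definition~\ref{df27}. Formula (\ref{man010}) shows every generator of the complexification $\mathcal{H}^{\Cnum}_n(W)$ lies in $\bigoplus_{k+l=n}\mathscr{H}_{k,l}(Z)$, giving one inclusion, and (\ref{man011}) shows every generator of each $\mathscr{H}_{k,n-k}(Z)$ lies in $\mathcal{H}^{\Cnum}_n(W)$, giving the reverse. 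I expect the main obstacle to be the key identification $Z(\mathfrak{h})=e^{\mi\theta}(u+\mi v)$ and the verification that the two composed dictionaries recombine into the exact closed-form coefficients $d_k$ and $\tilde c_i$: the algebra of nested binomial sums is routine but delicate, and one must keep careful track of the $\rho=2$ normalization and of the conjugation $\bar{\mathfrak{h}}$ so that the arguments $f_i+\mi g_i$ come out correctly.
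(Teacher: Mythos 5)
Your treatment of the two identities (\ref{man010}) and (\ref{man011}) is correct and is essentially the paper's own argument. The key identification you isolate, $Z(\mathfrak{h})=e^{\mi\theta}(u+\mi v)$ with $u=X(f)+Y(g)$, $v=Y(f)-X(g)$, is exactly the paper's Eq.~(\ref{aaa}) inside Proposition~\ref{pm10} (there phrased as $\cos\theta_i\,\RE Z(\mathfrak{h})+\sin\theta_i\,\IM Z(\mathfrak{h})=X(f_i)+Y(g_i)$ with $f_i+\mi g_i=\tfrac{1}{\sqrt2}e^{\mi\theta_i}\bar{\mathfrak{h}}$), and composing the two ``dictionaries'' --- Proposition~\ref{2dim2} to pass between $J_{k,n-k}$ and the products $H_jH_{n-j}$ of the real coordinates, and the addition/inversion formulas (\ref{man000})--(\ref{man001}) of Theorem~\ref{mth00} to pass between those products and single $H_n$'s --- is precisely what the paper does in Propositions~\ref{p0402} and~\ref{pm10}. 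Your bookkeeping of the $\rho=2$ normalization, the phase, and the conjugate $\bar{\mathfrak{h}}$ checks out, and the nested binomial sums do recombine into $d_k$ and $\tilde c_i$ as you claim.

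There is, however, a genuine gap in your proof of the last claim, (\ref{eq3}). Your span argument establishes only that
\begin{equation*}
\mathcal{H}^{\Cnum}_n(W)\;=\;\overline{\mathrm{span}}\Big(\bigcup_{k+l=n}\mathscr{H}_{k,l}(Z)\Big),
\end{equation*}
i.e.\ equality of the two closed subspaces of $L^2_{\Cnum}(\Omega)$. But (\ref{eq3}) asserts an \emph{orthogonal direct sum}: the spaces $\mathscr{H}_{k,l}(Z)$ with $k+l=n$ must be mutually orthogonal, and nothing in your argument addresses this; it is not automatic, since for instance $\mathscr{H}_{2,0}(Z)$ and $\mathscr{H}_{1,1}(Z)$ sit inside the same space $\mathcal{H}^{\Cnum}_2(W)$, so one must rule out any overlap. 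The paper closes exactly this point with Proposition~\ref{pp3}: for jointly symmetric complex Gaussian $\zeta_1,\zeta_2$ one has $E[J_{m_1,n_1}(\zeta_1)\overline{J_{m_2,n_2}(\zeta_2)}]=0$ unless $(m_1,n_1)=(m_2,n_2)$, proved by differentiating the joint Laplace transform; applied with $\zeta_1=\sqrt2\,Z(\mathfrak{h})/\norm{\mathfrak{h}}$, $\zeta_2=\sqrt2\,Z(\mathfrak{g})/\norm{\mathfrak{g}}$ this gives the orthogonality of the generators of distinct chaoses. (Alternatively you could invoke the orthogonality already contained in Eq.~(\ref{pmn}) in the proof of Proposition~\ref{ppp1}, which is established independently of Theorem~\ref{co2}.) Once this orthogonality step is supplied, your proof is complete and coincides with the paper's.
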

 Proofs of Theorem~\ref{mth00}-\ref{co2} are presented in Section~\ref{sec4}.
 \begin{rem}
 Substituting (\ref{typ1}) into (\ref{eq3}), we get
 \begin{equation*}
\bigoplus_{k+l=n}\mathscr{H}_{k,l}(Z) = \bigoplus_{i+j=n}\big[\mathcal{H}_{i}(X)\mathcal{H}_{j}(Y)+\mi \mathcal{H}_{i}(X)\mathcal{H}_{j}(Y)\big],
 \end{equation*}which can be regarded as the infinite dimensional version of Equality (\ref{h2j}).
 \end{rem}

\begin{thm}\label{pp2} 
Suppose $\varphi\in \FH_{\Cnum}^{\odot m}\otimes \FH_{\Cnum}^{\odot n}$ and $F=\mathscr{I}_{m,n}(\varphi)=U+\mi V$. Then
     there exist real $u,\,v\in (\FH\oplus \FH)^{\odot (m+n)}$ such that
      \begin{align}
        U&=\mathcal{I}_{m+n}(u),\quad V=\mathcal{I}_{m+n}(v),\label{eeq}
      \end{align}
      where $\mathcal{I}_p(g)$ is the $p$-th real Wiener-It\^{o} multiple integral of $g$ with respect to $W$. And if $m\neq n$ then
       \begin{align}
         E[U^2]&=E[V^2], \quad E[UV]=(m+n)!\innp{u,\,v}_{(\FH\oplus\FH)^{\otimes (m+n)}}=0.\label{inn}
      \end{align}
\end{thm}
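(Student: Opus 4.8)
The plan is to exploit the isometry $\mathscr{I}_{m,n}$ from Proposition~\ref{ppp1} together with the decomposition identity \eqref{man011} of Theorem~\ref{co2}, which expresses each generating element $J_{k,n-k}(Z(\mathfrak{h}))$ of $\mathscr{H}_{m,n}(Z)$ as a real linear combination of variables $H_n(X(f_i)+Y(g_i))$, each lying in the $(m+n)$-th real chaos $\mathcal{H}_{m+n}(W)$. First I would reduce to the generating case: by Proposition~\ref{ppp1}(ii) the element $\varphi\in\FH_{\Cnum}^{\odot m}\otimes\FH_{\Cnum}^{\odot n}$ can be approximated in norm by finite linear combinations of symmetrized tensors $\mathrm{symm}(\otimes\mathfrak e_k^{\otimes m_k})\otimes\mathrm{symm}(\otimes\bar{\mathfrak e}_k^{\otimes n_k})$, equivalently $F=\mathscr{I}_{m,n}(\varphi)$ is an $L^2_\Cnum$-limit of finite sums of terms $J_{k,n-k}(Z(\mathfrak h))$. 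For a single such term, \eqref{man011} writes it as $\sum_i \tilde c_i\, H_n(X(f_i)+Y(g_i))$ with $\tilde c_i\in\Cnum$; writing $\tilde c_i=\alpha_i+\mi\beta_i$ and using that $H_n(X(f_i)+Y(g_i))=\mathcal{I}_{m+n}(w_i)$ for a real kernel $w_i\in(\FH\oplus\FH)^{\odot(m+n)}$ (the symmetrized $(m+n)$-fold tensor power of $(f_i,g_i)\in\FH\oplus\FH$, via Proposition~\ref{rmw}), I would identify the real and imaginary parts as real multiple integrals $U=\mathcal{I}_{m+n}(\sum_i\alpha_i w_i)$ and $V=\mathcal{I}_{m+n}(\sum_i\beta_i w_i)$.

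Next I would pass to the limit. Since $F\mapsto U=\RE F$ and $F\mapsto V=\IM F$ are continuous in $L^2$, and since $\mathcal{I}_{m+n}$ is an isometry onto $\mathcal{H}_{m+n}(W)$, the approximating real kernels $\sum_i\alpha_i w_i$ and $\sum_i\beta_i w_i$ form Cauchy sequences in $(\FH\oplus\FH)^{\odot(m+n)}$; their limits $u,v$ are the desired real kernels with $U=\mathcal{I}_{m+n}(u)$, $V=\mathcal{I}_{m+n}(v)$. This establishes \eqref{eeq}. Here it is essential that $\RE F,\IM F\in\mathcal{H}_{m+n}(W)$, which follows directly from \eqref{eq3}: indeed $F\in\mathscr{H}_{m,n}(Z)\subseteq\bigoplus_{k+l=m+n}\mathscr{H}_{k,l}(Z)=\mathcal{H}_n^\Cnum(W)=\mathcal{H}_{m+n}(W)+\mi\mathcal{H}_{m+n}(W)$, so both parts live in a single real chaos of fixed order $m+n$.

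For the orthogonality claim \eqref{inn} in the case $m\neq n$, the plan is to compute the relevant complex moments of $F$ using the Wick/isometry structure rather than the kernels directly. By the complex isometry, $E[|F|^2]=m!n!\,\norm{\varphi}^2$ and $E[F^2]=E[U^2]-E[V^2]+2\mi E[UV]$. I would show $E[F^2]=0$ when $m\neq n$: this is the infinite-dimensional analogue of the vanishing $E_\nu[J_{m,n}^2]=0$ recorded after \eqref{jbar}, and follows because $F^2\in\bigoplus_{k+l=2m,\,k'+l'=2n}\mathscr{H}_{k+k',l+l'}$ has no component in $\mathscr{H}_{0,0}$ unless $2m=0$ and $2n=0$; more cleanly, $E[F^2]=E[\mathscr{I}_{m,n}(\varphi)^2]$ pairs the holomorphic kernel against itself and by the conjugation relation $\overline{J_{m,n}}=J_{n,m}$ (Eq.~\eqref{jbar}) one gets a pairing that vanishes identically for $m\neq n$. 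Splitting $E[F^2]=0$ into real and imaginary parts yields simultaneously $E[U^2]=E[V^2]$ and $E[UV]=0$; the stated identity $E[UV]=(m+n)!\innp{u,v}$ is then just the real isometry Proposition~\ref{rmw}(ii) applied to $\mathcal{I}_{m+n}(u)$ and $\mathcal{I}_{m+n}(v)$.

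I expect the main obstacle to be the verification that $E[F^2]=0$ when $m\neq n$ at the abstract isonormal level, i.e.\ translating the one-dimensional fact $E_\nu[J_{m,n}(z)^2]=0$ into the statement for $\mathscr{I}_{m,n}(\varphi)$. The cleanest route is to use the complete orthonormal system $\{\mathbf J_{\mathbf m,\mathbf n}\}$ from Proposition~\ref{ppp1}(i): one reduces $E[F^2]$ to sums of products $E[\mathbf J_{\mathbf m,\mathbf n}\mathbf J_{\mathbf m',\mathbf n'}]$, and since $\mathbf J_{\mathbf m,\mathbf n}$ is the Wick product $:\!\xi^{\mathbf m}\bar\xi^{\mathbf n}\!:$, the expectation of a product of two Wick monomials is nonzero only when the holomorphic indices of one match the antiholomorphic indices of the other, forcing $\abs{\mathbf m}=\abs{\mathbf n'}$ and $\abs{\mathbf n}=\abs{\mathbf m'}$; with both factors in $\mathscr{H}_{m,n}$ this forces $m=n$, so $E[F^2]=0$ whenever $m\neq n$. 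Care is needed to justify termwise expectation under the $L^2$ limit, but this is routine given the orthonormality.
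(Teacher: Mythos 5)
Your proposal is correct and is essentially the paper's own proof: for \eqref{eeq} the paper simply combines the identification $\mathcal{H}^{\Cnum}_{m+n}(W)=\bigoplus_{k+l=m+n}\mathscr{H}_{k,l}(Z)$ from Theorem~\ref{co2} (i.e.\ \eqref{eq3}) with the surjectivity of the isometries $\mathcal{I}_{m+n}$ and $\mathscr{I}_{m,n}$ — your approximation/Cauchy-sequence argument via \eqref{man011} merely unfolds this — and for \eqref{inn} the paper proves $E[F^2]=\innp{F,\bar{F}}_{L^2(\Omega)}=0$ by noting that conjugation swaps the chaos indices ($\overline{\mathscr{I}_{m,n}(\varphi)}=\mathscr{I}_{n,m}(\psi)$, Lemma~\ref{lm1}) and that $\mathscr{H}_{m,n}(Z)\perp\mathscr{H}_{n,m}(Z)$ for $m\neq n$ (Theorem~\ref{chaos}), which is exactly the mechanism of your Wick-basis computation resting on \eqref{jbar}. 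The only blemish is your aside that $F^2$ has no $\mathscr{H}_{0,0}$-component ``unless $2m=0$ and $2n=0$'' — the correct condition is unless $m=n$ — but the clean conjugation/orthogonality argument you give immediately afterwards supersedes it.
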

This theorem can be seen as the infinite dimensional version of Proposition~\ref{2dims}, the proof is presented in Section~\ref{sec4}.

\subsection{Ito's complex multiple integrals revisited}\label{exp2}
If $\FH$ is a real separable Hilbert space $L^2(T,\mathcal{B},\mu)$ and $\mu$ is a non-atomic measure, then the definition of $\mathscr{I}_{m,n}$ (see Definition~\ref{imn} ) coincides with a multiple Wiener-It\^{o} integrals defined by It\^{o} \cite{ito}. In fact, at first, It\^{o} gave a continuous complex normal random measure $\mathbf{M}=\set{M(B):B\in \mathcal{B},\mu(B)<\infty}$ on $(T,\mathcal{B})$, such that, for every $B, C\in \mathcal{B}$ with finite measure,
   \begin{equation*}
      E[M(B)\overline{M(C)}]=\mu(B\cap C).
   \end{equation*}
 Next, for the off-diagonal simple function $f\in {\FH}_{\Cnum}^{\otimes m}\otimes {\FH}_{\Cnum}^{\otimes n}$ of the form
\begin{equation}\label{sfc}
   f(t_1,\dots,t_m,s_1,\dots,s_n)=\sum a_{i_1\dots i_m j_1\dots j_n} \mathbf{1}_{E_{i_1}\times\dots\times E_{i_m}\times E_{j_1}\times\dots\times E_{j_n} },
\end{equation}
with $\mathbf{1}_{B}(\cdot) $ the characteristic function of the set $B$, he defined the multiple integrals $I_{m,n}(f)$ by
\begin{equation}
   I_{m,n}(f)=\sum a_{i_1\dots i_m j_1\dots j_n} M(E_{i_1})\dots M(E_{i_m})\overline{M(E_{j_1})}\dots \overline{ M(E_{j_n})}.
\end{equation}
And then by density argument, he extended the multiple integrals to any $f\in {\FH}_{\Cnum}^{\otimes m}\otimes {\FH}_{\Cnum}^{\otimes n}$,
\begin{equation*}
 I_{m,n}(f)=\int\cdots\int f(t_1,\dots,t_m,s_1\dots,s_n) \dif M(t_{1})\dots \dif M(t_{m})\overline{\dif M(s_{1})}\dots \overline{ \dif M(s_{n})}.
\end{equation*}
Moreover, It\^{o} established the relation between complex multiple integrals and complex Hermite polynomials: suppose that $ \mathfrak{h}_1(t),\dots,\mathfrak{h}_l(t)$ be any orthonormal system in $\FH_{\Cnum}$ and $\alpha_i,\,\beta_j=1,\dots,l$, then
\begin{align}
  &\int\cdots\int \mathfrak{h}_{\alpha_1}(t_1)\cdots \mathfrak{h}_{\alpha_m}(t_m)\overline{\mathfrak{h}_{\beta_1}(s_1)}\dots\overline{\mathfrak{h}_{\beta_n}(s_n)} \dif M(t_{1})\dots \dif M(t_{m})\overline{\dif M(s_{1})}\dots \overline{ \dif M(s_{n})}\nonumber \\
  &=\prod_{k=1}^l \, {2^{-\frac{m_k+n_k}{2}}}J_{m_k,n_k}(\sqrt2 z_k),\label{itomulitp}
\end{align}
where $z_k=\int \mathfrak{h}_k(t) \dif {M}(t),\,k=1,\dots,l$ and $m_k,\,n_k$ are the number of $k$ appearing in $\alpha_i$ and $\beta_j$ respectively.
\begin{rem}\label{rm9}
Here the notation $2^{-\frac{i+j}{2}}J_{i,j}(\sqrt2 z)$ is exactly the notation $H_{i,j}(z,\bar{z})$ of \cite{ito} by It\^{o}. It follows from (\ref{mulint}) and (\ref{itomulitp}) that $\mathscr{I}_{m,n}$ coincides with $I_{m,n}$.
\end{rem}
Now we turn to express Eq.(\ref{man000})-(\ref{man001}) and Eq.(\ref{man010})-(\ref{man011}) in terms of It\^{o}'s theory. Set $\mathbf{M}=\frac{1}{\sqrt2}[\mathbf{M}_1+\mi \mathbf{M}_2]$. Then $\mathbf{M}_1,\, \mathbf{M}_2$ are two real independent continuous normal system such that , for every $B, C\in \mathcal{B}$ with finite measure, $E[M_1(B)M_1(C)]=E[M_2(B)M_2(C)]=\mu(B\cap C)$.  Set $\widehat{T}=\set{1,2}\times T,\,\mathcal{B}(\widehat{T})=\mathcal{B}(\set{1,2}\times {T})$. And set
\begin{equation*}
  \widehat{{M}}(B)={M}_1(B_1)+ {M}_2(B_2),\quad \forall B=\big(\set{1}\times B_1 \big) \bigcup \big(\set{2}\times B_2 \big)\in \mathcal{B}(\widehat{T}).
\end{equation*}
Then $\widehat{\mathbf{M}}=\set{\widehat{{M}}(B):\,B=\big(\set{1}\times B_1 \big) \bigcup \big(\set{2}\times B_2 \big),\, \mu(B_1)+\mu(B_2)<\infty} $ is a normal random measure on $(\hat{T},\,\mathcal{B}(\widehat{T}))$ and
$ L^2(\widehat{T}) =L^2(T)\oplus L^2(T).$

For any $\widehat{f}=(f_1,f_2)$ with $f_i\in \FH,\,i=1,2$.
Suppose $\norm{f_1}_{\FH}^2+\norm{f_2}_{\FH}^2=1$, then we have that
\begin{equation*}
  \int \widehat{f} \dif \widehat{{M}} =\int f_1\dif {M}_1 +\int f_2\dif {M}_2,
\end{equation*}
and Eq.(\ref{man000}) means that
\begin{align*}
 &\int\cdots \int  \widehat{f}^{\otimes n} \dif \widehat{{M}}(t_1)\cdots\dif \widehat{{M}}(t_n)\\
 &= H_n (\int \widehat{f} \dif \widehat{{M}} ) \\
&=\sum_{l=0}^n {n\choose l} \norm{f_1}^l\norm{f_2}^{n-l} H_l\big(\frac{\int f_1\dif {M}_1}{\norm{f_1}}\big)H_{n-l}\big(\frac{\int f_2\dif {M}_2}{\norm{f_2}}\big)\\
&=\sum_{l=0}^n {n\choose l} \int\cdots \int {f_1}^{\otimes l}\dif {{M}_1}(t_1)\cdots\dif{{M}_1}( t_l) \int\cdots \int  {f_2}^{\otimes (n-l)}\dif {{M}_2}( t_{l+1})\cdots\dif {{M}_2}(t_n).
\end{align*}
Suppose that $\norm{f_1}_{\FH}^2=\norm{f_2}_{\FH}^2=1$, then Eq.(\ref{man001}) means that
\begin{align*}
&\int\cdots \int {f_1}^{\otimes l}\dif {{M}_1}(t_1)\cdots\dif {{M}_1}(t_l) \int\cdots \int  {f_2}^{\otimes (n-l)}\dif {{M}_2}(t_{l+1})\cdots\dif {{M}_2}(t_n)\\
&=   {H}_{l}\big (\int f_1\dif {M}_1\big){H}_{n-l}\big(\int f_2\dif {M}_2 \big)\\
&=\sum_{k} \tensor{M}^{-1}_{l,k} H_n\big(\cos\theta_k \int f_1\dif {M}_1+\sin\theta_k \int f_2\dif {M}_2\big)\\
&=\sum_{k} \tensor{M}^{-1}_{l,k}  \int  \widehat{f^{(k)}}^{\otimes n} \dif \widehat{{M}}(t_1)\cdots\dif \widehat{{M}}(t_n),
\end{align*} where $ \widehat{f^{(k)}}= (f^{(k)}_1,\,f^{(k)}_2)=(\cos\theta_k f_1,\, \sin\theta_k f_2)  $.

Moreover, let $\widehat{g}=(g_1,\,g_2)$, and suppose that $\norm{f_1}_{\FH}^2+\norm{f_2}_{\FH}^2=1,\,\norm{g_1}_{\FH}^2+\norm{g_2}_{\FH}^2=1$, then Eq.(\ref{man010}) means that
\begin{align*}
     &\int\cdots \int  (\widehat{f}^{\otimes n} +\mi  \widehat{g}^{\otimes n}) \,\dif \widehat{{M}}(t_1)\cdots\dif \widehat{{M}}(t_n)\\
 &= H_n (\int \widehat{f} \dif \widehat{{M}} ) +\mi H_n (\int \widehat{g} \dif \widehat{{M}} )\\
 &= \sum_{k=0}^n \, d_k \big(J_{k,n-k}(\int \mathfrak{h} \dif {M})+\mi J_{k,n-k}(\int \tilde{\mathfrak{h}} \dif {M})\big)\\
 &=\sum_{k=0}^n \, d_k \int\cdots\int  \big(\mathfrak{h}^{\otimes k}\otimes  \overline{\mathfrak{h}}^{\otimes (n-k)}+\mi\, \tilde{\mathfrak{h}}^{\otimes k}\otimes  \overline{\tilde{\mathfrak{h}}}^{\otimes (n-k)}\big)\dif M(t_{1})\cdots \dif M(t_{m})\overline{\dif M(s_{1})}\cdots \overline{ \dif M(s_{n})} ,
\end{align*}
where $\mathfrak{h}=\sqrt2 e^{\mi \theta}(f_1-\mi f_2),\,\tilde{\mathfrak{h}}=\sqrt2 e^{\mi \theta}(g_1-\mi g_2)$.

Suppose that $\mathfrak{h}\in \FH_{\Cnum}$ with $\norm{\mathfrak{h}}_{\FH_{\Cnum}}=\sqrt2$,  then Eq.(\ref{man011}) means that
\begin{align*}
&\int\cdots\int \mathfrak{h}^{\otimes k}\otimes  \overline{\mathfrak{h}}^{\otimes (n-k)} \dif M(t_{1})\dots \dif M(t_{m})\overline{\dif M(s_{1})}\cdots \overline{ \dif M(s_{n})}\\
&=  J_{k,n-k}(\int \mathfrak{h} \dif {M})= \sum_{i=0}^n\tilde{c}_i H_n(\int \widehat{f}_i \dif \widehat{{M}})\\
&= \sum_{i=0}^n\tilde{c}_i \int\cdots \int \widehat{f^{(i)}}^{\otimes n} \dif \widehat{{M}}(t_1)\cdots\dif \widehat{{M}}(t_n),
\end{align*} where  $\widehat{f^{(i)}}=(f_i,g_i)$ and $f_i+\mi g_i=\frac{1}{\sqrt2} e^{\mi \theta_i}\bar{\mathfrak{h}}$.

\begin{exmp}
Let $(B_1(t),B_2(t))$ denote 2-dimensional Brownian motion on $t\in[0,\,\infty)$. We put $\zeta_t:=\frac{B_1(t)+\mi B_2(t)}{\sqrt2}$. $\zeta_t$ is called complex Brownian motion. Extending Theorem 9.6.9 in the textbook by Kuo \cite{guo} to 2-dimensional Brownian motion, we have
   \begin{align*}
      H_n(\int \widehat{f} \dif \widehat{{M}} )&=H_n\big(\frac{1}{\sqrt2}\int_0^{\infty} f_1(t)\dif B_1(t) +\frac{1}{\sqrt2}\int_0^{\infty} f_2(t)\dif B_2(t) \big)\\
      &=\frac{n!}{2^{n/2}}\sum_{i_1,\dots,i_n=1}^2 \int_0^{\infty}\int_0^{t_n}\cdots \int_0^{t_2} f_{i_1}(t_1)\cdots f_{i_n}(t_n)\dif B_{i_1}(t_1)\cdots\dif B_{i_n}(t_n).
   \end{align*}
Thus, we can express Eq.(\ref{man000}) in It\^{o}'s {\it iterated} integrals as follows
    \begin{align*}
       & \sum_{i_1,\dots,i_n=1}^2 \int_0^{\infty}\int_0^{t_n}\cdots \int_0^{t_2} f_{i_1}(t_1)\cdots f_{i_n}(t_n)\dif B_{i_1}(t_1)\cdots\dif B_{i_n}(t_n)\\
      &= \sum_{l=0}^n  \int_0^{\infty}\int_0^{t_l}\cdots \int_0^{t_2} {f_1}(t_1)\cdots{f_1}(t_l)\dif B_1(t_1)\cdots\dif B_1(t_l)\\
      &\quad \times \int_0^{\infty}\int_0^{t_{n-l}}\cdots \int_0^{t_{l+2}}  {f_2}(t_{l+1})\cdots{f_2}(t_n)\dif B_2(t_{l+1})\cdots \dif B_2(t_{n}).
    \end{align*}
We express Eq.(\ref{man001}) in It\^{o}'s {\it iterated} integrals as follows
\begin{align*}
& \int_0^{\infty}\int_0^{t_l}\cdots \int_0^{t_2} {f_1}(t_1)\cdots{f_1}(t_l)\dif B_1(t_1)\cdots\dif B_1(t_l)\\
      &\quad \times \int_0^{\infty}\int_0^{t_{n-l}}\cdots \int_0^{t_{l+2}}  {f_2}(t_{l+1})\cdots{f_2}(t_n)\dif B_2(t_{l+1})\cdots \dif B_2(t_{n})\\
&={n \choose l}\sum_{k} \tensor{M}^{-1}_{l,k} \sum_{i_1,\dots,i_n=1}^2 \int_0^{\infty}\int_0^{t_n}\cdots \int_0^{t_2} f^{(k)}_{i_1}(t_1)\cdots f^{(k)}_{i_n}(t_n)\dif B_{i_1}(t_1)\cdots\dif B_{i_n}(t_n),
\end{align*}
We express Eq.(\ref{man010}) in It\^{o}'s iterated integrals as follows
\begin{align*}
& \sum_{i_1,\dots,i_n=1}^2 \int_0^{\infty}\int_0^{t_n}\cdots \int_0^{t_2}[ f_{i_1}(t_1)\cdots f_{i_n}(t_n)+\mi g_{i_1}(t_1)\cdots g_{i_n}(t_n)]\dif B_{i_1}(t_1)\cdots\dif B_{i_n}(t_n)\\
 &=\frac{2^{n/2}}{n!}\sum_{k=0}^n \, d_k \int\cdots\int  \big(\mathfrak{h}^{\otimes k}\otimes  \overline{\mathfrak{h}}^{\otimes (n-k)}+\mi\, \tilde{\mathfrak{h}}^{\otimes k}\otimes  \overline{\tilde{\mathfrak{h}}}^{\otimes (n-k)}\big)\dif \zeta(t_{1})\cdots \dif \zeta(t_{m})\overline{\dif \zeta(s_{1})}\cdots \overline{ \dif \zeta(s_{n})} ,
\end{align*} where $\norm{f_1}_{\FH}^2+\norm{f_2}_{\FH}^2=1,\,\norm{g_1}_{\FH}^2+\norm{g_2}_{\FH}^2=1$ and $\mathfrak{h}=\sqrt2 e^{\mi \theta}(f_1-\mi f_2),\,\tilde{\mathfrak{h}}=\sqrt2 e^{\mi \theta}(g_1-\mi g_2)$.

We express Eq.(\ref{man011}) in It\^{o}'s iterated integrals as follows
\begin{align*}
    &  \int\cdots\int \mathfrak{h}^{\otimes k}\otimes  \overline{\mathfrak{h}}^{\otimes (n-k)} \,\dif \zeta(t_{1})\cdots \dif \zeta(t_{m})\overline{\dif \zeta(s_{1})}\cdots \overline{ \dif \zeta(s_{n})} \\
    &= \frac{n!}{2^{n/2}} \sum_{j=0}^n\tilde{c}_j\sum_{i_1,\dots,i_n=1}^2 \int_0^{\infty}\int_0^{t_n}\cdots \int_0^{t_2} f^{(j)}_{i_1}(t_1)\cdots f^{(j)}_{i_n}(t_n)\dif B_{i_1}(t_1)\cdots\dif B_{i_n}(t_n),
\end{align*} where  $\widehat{f}^{(j)}=({f}^{(j)}_1,{f}^{(j)}_2)$ and ${f}^{(j)}_1+\mi {f}^{(j)}_2=\frac{1}{\sqrt2} e^{\mi \theta_j}\bar{\mathfrak{h}}$.
\end{exmp}
\begin{exmp}
  Let $\set{\zeta_j=\xi^{(1)}_j+\mi \xi^{(2)}_j:\,j\ge 1}$ be a sequence of i.i.d. symmetric complex normal random variables with variance $1$. Set $T=\set{1,2\dots,n,\dots}$, $\sharp$ be the counting measure on $T$ and $\mathbf{M}(E)=\sum\limits_{j\in E\subset T} \zeta_j$, which is the complex normal random measure on $(T,\sharp)$.

  This system is not included in It\^{o}'s framework, because $(T,\sharp)$ is not continuous (or say non-atomic, see \cite{ito} and \cite{Nua} ). Specifically,
  set $\FH=l^2$,  $f_1,\,f_2\in \FH$ and $\widehat{f}=f_1+\mi f_2$ with $\norm{f_1}^2_{\FH}+\norm{f_2}^2_{\FH}=1$, by direct computation,
  \begin{align*}
   & \sum_{t_1\dots t_n,s_1\dots s_m}\widehat{f}(t_1)\cdots\widehat{f}(t_n)\overline{\widehat{f}(s_1)}\cdots\overline{\widehat{f}(s_m)}\zeta_{t_1}\cdots\zeta_{t_n}
   \overline{\zeta_{s_1}}\cdots\overline{\zeta_{s_m}}\\
   \neq& H_{n,m}\big(\sum_{n}\widehat{f}(n)\zeta_n, \sum_{n}\overline{\widehat{f}(n)}\overline{\zeta_n}\big)\\
   =& 2^{-\frac{n+m}{2}}J_{n,m}\big(\sqrt{2}\sum_{n}\widehat{f}(n)\zeta_n\big).\qquad \text{(see Remark~\ref{rm9})}
  \end{align*}
  This means that It\^{o}'s multiple integrals with discrete time do not coincide with the chaos decomposition with respect to real or complex Hermite polynomials (see Definition~\ref{df27} ). For an alternative theory of stochastic integrals with discrete time and the corresponding chaos decomposition, please refer to the monograph by Privault \cite{hpp}.


\end{exmp}
\section{Proof of theorems}\label{sec4}
\subsection{Proof of Proposition~\ref{ppp1} and Theorem~\ref{mth00}-\ref{pp2}}\label{sec4_1}

\noindent{\it Proof of Proposition~\ref{ppp1}.\,}
(i): We follow the arguments of the real Wiener-It\^{o} chaos \cite[Page 7]{Nua}. Concretely, denote $\mathcal{P}_{m,n}$ \cite[Page 6]{Nua} be the closure of the linear space generated by the class
\begin{align}\label{ply}
   \set{p(Z(\mathfrak{h}_1),\dots, Z(\mathfrak{h}_j)):\,j\ge 1,\mathfrak{h}_1,\dots,\mathfrak{h}_j\in \FH_{\Cnum}},
\end{align}
where $p$ is a polynomial in complex variables $z_1,\dots, z_j$ of degree less than $m$ and $\bar{z}_1,\dots, \bar{z}_j$ of degree less than $n$ (for short, say the degree of $p$ less than or equal to $(m,n)$).
We claim that
\begin{align}\label{pmn}
   \mathcal{P}_{m,n}=\bigoplus^{m}_{k=0}\bigoplus^{n}_{l=0}\mathscr{H}_{k,l}(Z).
\end{align}
Indeed, since $J_{k,l}(z)$ is a polynomial in complex variable $z$ of degree $k$ and $\bar{z}$ of degree $l$ (see \cite[Theorem 2.15]{cl} or
\cite[Example 3.31 ]{jan}), the inclusion $\bigoplus^{m}_{k=0}\bigoplus^{n}_{l=0}\mathscr{H}_{k,l}(Z)\subset \mathcal{P}_{m,n}$ is immediate. To prove the converse inclusion, it is enough to check $ \mathcal{P}_{m,n}$ is orthogonal to all $\mathscr{H}_{k,l}(Z)$ for $k> m$ or $l>n$, i.e., to show that for any $\mathfrak{h}\in\FH_{\Cnum}$ $E[p(Z(\mathfrak{h}_1),\dots, Z(\mathfrak{h}_j))J_{l,k}(Z(\mathfrak{h}))]=0 $ . By the Gram-Schmidt orthogonalization process, we can suppose that $\mathfrak{h}\in\set{\mathfrak{h}_1,\dots,\mathfrak{h}_j}$ which is an orthonormal system. The equality \cite[Corollary 2.8]{cl}
\begin{align}\label{singlenomial}
  z^r\bar{z}^s&= \sum_{i=0}^{r\wedge s} {r \choose i}{s \choose i}i! 2^{i} J_{r-i,s-i}(z)
\end{align}
implies $E[Z(\mathfrak{h})^r\overline{Z(\mathfrak{h})}^s J_{l,k}(Z(\mathfrak{h})) ]=0$ when $r<l,\,s<k$. This ends the proof together with the independent property of $Z(\mathfrak{h}_i)$ for $i=1,\dots,j$.

Clearly, the random variables of the class $\set{\mathbf{J}_{\mathbf{m},\mathbf{n}}:\abs{\mathbf{m}} =m,\,\abs{\mathbf{n}}=n}$ belong to $ \mathcal{P}_{m,n}$ and are orthonormal system. Since $\set{\mathfrak{e}_i}$ is an orthonormal basis of $\FH_{\Cnum}$, every polynomial random variable $p(Z(\mathfrak{h}_1),\dots,Z(\mathfrak{h}_j))$ as (\ref{ply}) can be approximated by polynomials $q(Z(\mathfrak{e}_1),\dots,Z(\mathfrak{e}_r))$ with the degree of $q$ less than or equal to $(m,n)$. Thus Eq.(\ref{singlenomial}) implies that the random variables of the class $\set{\mathbf{J}_{\mathbf{m},\mathbf{n}}:\abs{\mathbf{m}} \le m,\,\abs{\mathbf{n}}\le n}$ are a basis of $\mathcal{P}_{m,n}$. But $\set{\mathbf{J}_{\mathbf{m},\mathbf{n}}:\abs{\mathbf{m}} = m,\,\abs{\mathbf{n}}= n}$ are orthogonal to $\mathcal{P}_{m-1,n}\bigcup\mathcal{P}_{m,n-1}$. Thus the class $\set{\mathbf{J}_{\mathbf{m},\mathbf{n}}:\abs{\mathbf{m}} = m,\,\abs{\mathbf{n}}= n}$ are a basis of $\mathscr{H}_{m,n}(Z)$.

(ii): The isometry property is deduced from $\norm{\mathrm{symm}(\otimes_{k=1}^{\infty}\mathfrak{e}_k^{\otimes m_i})}^2_{\FH_{\Cnum}^{\otimes m}}=\frac{\mathbf{m}!}{m!}$ \cite[Page 8]{Nua}. Since by (i) the span of $\set{\mathbf{J}_{\mathbf{m},\mathbf{n}}:\abs{\mathbf{m}} =m,\,\abs{\mathbf{n}}=n}$ generates $\mathscr{H}_{m,n}(Z)$ and since linear combinations of vectors of the
type $\mathrm{symm}(\otimes_{k=1}^{\infty}\mathfrak{e}_k^{\otimes m_k})\otimes \mathrm{symm}(\otimes_{k=1}^{\infty}\bar{\mathfrak{e}}_k^{\otimes n_k}) $ are dense in $\FH_{\Cnum}^{\odot m}\otimes \FH_{\Cnum}^{\odot n}$, we have that the mapping between $\FH_{\Cnum}^{\odot m}\otimes \FH_{\Cnum}^{\odot n} $ and $\mathscr{H}_{m,n}(Z)$ is onto.
{\hfill\large{$\Box$}}
\vskip 5mm

We cite a special determinant which is Problem 342 of \cite{pro}.
\begin{lem}\label{lm01} Let $\mathbb{F}[x,y]$ be the set of 2-variate polynomials over the field $\mathbb{F}$.
Set $f_i(a,b)\in \mathbb{F}[a,b]$ be the homogeneous polynomials of degree $i$ with $i=1,\dots,n$. Then the $n+1$ order determinant
   \begin{align*}
  &\quad   \left|
\begin{array}{lllll}
     f_n(a_0,b_0) & b_0f_{n-1}(a_0,b_0)   & \dots  & b_0^{n-1}f_1(a_0,b_0)& b_0^n\\
     f_n(a_1,b_1) & b_1f_{n-1}(a_1,b_1)   & \dots  & b_1^{n-1}f_1(a_1,b_1)& b_1^n\\
         \hdotsfor{5}\\
     f_n(a_{n},b_{n}) & b_{n}f_{n-1}(a_{n},b_{n})   & \dots  & b_{n}^{n-1}f_1(a_{n},b_{n})& b_{n}^n
\end{array}
\right|\\
&=c_{1}c_{2}\dots c_{n}\prod_{0\le i<j\le n}(a_i b_j-a_j b_i),
   \end{align*}
   where $c_i$ is the coefficient of $a^i$ in the polynomial $f_i(a,b)$.
\end{lem}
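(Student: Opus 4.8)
The plan is to realize the given determinant---call it $D$, with rows indexed by $j=0,\dots,n$ and columns by $k=0,\dots,n$, the $(j,k)$-entry being $b_j^{k}f_{n-k}(a_j,b_j)$ (the final column $b_j^{n}$ corresponding to the convention $f_0\equiv 1$)---as a product of the homogeneous Vandermonde matrix with a triangular matrix. First I would note that each entry is homogeneous of degree $n$ in the pair $(a_j,b_j)$. Writing $f_i(a,b)=\sum_{p=0}^{i}c_p^{(i)}a^{p}b^{i-p}$ with leading coefficient $c_i^{(i)}=c_i$, each entry expands as
\[
b_j^{k}f_{n-k}(a_j,b_j)=\sum_{p=0}^{n-k}c_p^{(n-k)}a_j^{p}b_j^{n-p}=\sum_{q=k}^{n}c_{n-q}^{(n-k)}a_j^{n-q}b_j^{q},
\]
so that the $k$-th column of $D$ is a combination of the columns of the homogeneous Vandermonde matrix $V:=\big(a_j^{n-q}b_j^{q}\big)_{j,q=0}^{n}$ involving only the indices $q\ge k$.

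Consequently $D=VT$, where $T=(T_{q,k})$ is the lower triangular matrix with $T_{q,k}=c_{n-q}^{(n-k)}$ for $q\ge k$ and $0$ otherwise; the triangularity is forced precisely by $\deg f_{n-k}=n-k$, which prevents the $a$-degree from exceeding $n-k$. Its diagonal entries are $T_{k,k}=c_{n-k}^{(n-k)}=c_{n-k}$, whence $\det T=\prod_{k=0}^{n}c_{n-k}=c_1c_2\cdots c_n$, the trailing factor $c_0=1$ coming from $f_0\equiv 1$. Therefore $\det D=\det V\cdot\det T=(c_1\cdots c_n)\det V$, and the lemma is reduced to the classical homogeneous Vandermonde identity $\det V=\prod_{0\le i<j\le n}(a_ib_j-a_jb_i)$.

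For this remaining identity---which I regard as the only genuine content---I would use the standard divisibility argument, carried out over the infinite field $\mathbb{Q}$ (or an algebraic closure) so that the resulting polynomial identity specializes to any $\mathbb{F}$. Because every entry of $V$ is homogeneous of degree $n$ in its pair, $\det V$ is homogeneous of total degree $n(n+1)$, and whenever $(a_i,b_i)$ and $(a_j,b_j)$ are proportional the rows $i$ and $j$ become proportional, so $\det V$ vanishes on the irreducible hypersurface $a_ib_j-a_jb_i=0$; hence each of the $\binom{n+1}{2}$ pairwise non-associate factors $a_ib_j-a_jb_i$ divides $\det V$. Their product already has total degree $n(n+1)$, so $\det V$ equals it up to a scalar, which is pinned to $1$ by setting every $b_j=1$ and recognizing the ordinary Vandermonde after a reversal of the columns. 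The only place demanding genuine care is the index bookkeeping in the factorization $D=VT$; alternatively, one may bypass $V$ altogether and run the divisibility argument directly on $D$, reading off the constant $c_1\cdots c_n$ from its top $a$-degree part $\det\big(c_{n-k}a_j^{n-k}b_j^{k}\big)$.
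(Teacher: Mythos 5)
Your proof is correct. Note, however, that the paper itself does not prove this lemma at all: it is quoted verbatim as Problem 342 of Proskuryakov's \emph{Problems in Linear Algebra}, and is used as a black box to invert the matrix $\tensor{M}$ in the proof of Theorem~3.2 (where $f_l(a,b)={n\choose l}a^l$, so $c_l={n \choose l}$). So there is no ``paper approach'' to compare against; what you have supplied is a self-contained proof where the authors rely on a citation. Your argument is sound on both of its legs: the factorization $D=VT$ with $V=\big(a_j^{n-q}b_j^q\big)_{j,q}$ and $T_{q,k}=c_{n-q}^{(n-k)}$ for $q\ge k$ (zero otherwise) is forced, as you say, by $\deg f_{n-k}=n-k$, and it cleanly isolates the constant $\det T=c_1\cdots c_n$ (including the degenerate case where some $c_i=0$, when both sides vanish); the remaining identity $\det V=\prod_{0\le i<j\le n}(a_ib_j-a_jb_i)$ follows either by your divisibility argument (best phrased over an algebraically closed field, then transported to arbitrary $\mathbb{F}$ as an identity in $\mathbb{Z}[a_0,\dots,b_n]$) or, even more elementarily, by factoring $b_j^n$ out of each row and reducing to the ordinary Vandermonde in $x_j=a_j/b_j$, the column reversal's sign $(-1)^{\binom{n+1}{2}}$ being absorbed by flipping each of the $\binom{n+1}{2}$ factors. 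One small caveat: your closing ``alternative'' of running divisibility directly on $D$ and reading the constant off the top $a$-degree part is not quite a shortcut, since identifying that top part as $c_1\cdots c_n\prod_{i<j}(a_ib_j-a_jb_i)$ again requires the homogeneous Vandermonde evaluation; but this is a side remark, and your main line of proof stands.
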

\vskip 0.5cm
\noindent{\it Proof of Theorem~\ref{mth00}.\,}
Eq.(\ref{man000}) can be directly induced from the invariant property of Hermite polynomials \cite{Ma,str}
\begin{equation}\label{hn0}
   H_n(x\cos \theta+y\sin\theta)=\sum_{l=0}^n {n\choose l} (\cos \theta)^l(\sin\theta)^{n-l} H_l(x)H_{n-l}(y).
\end{equation}
In fact, if $\norm{f}_{\FH}^2+\norm{g}_{\FH}^2=1$ we can choose $\theta\in\Rnum$ such that $\cos\theta=\norm{f}_{\FH},\,\sin\theta=\norm{g}_{\FH}$ and let $x=\frac{X(f)}{\norm{f}_{\FH}},\,y=\frac{Y(g)}{\norm{g}_{\FH}}$ in the above equation.

Now we turn to Eq.(\ref{man001}). Denote $a=\sin\theta,\, b=\cos\theta$ and let $f_l(a,b)={n \choose l}a^l$. If we choose $\theta=\theta_k,\,k=0,\dots,n$, then Eq.(\ref{hn0}) can be looked as a system of $n+1$ linear equations in $n+1$ unknowns $H_l(x)H_{n-l}(y),\,l=0,\dots,n$ and the matrix of coefficients of the system is
\begin{equation}\label{matr11}
   \tensor{M}=\big(b_k^{l}f_{n-l}(a_k,b_k)\big)_{k,l},
\end{equation} where $k,l=0,\dots,n,\,a_k=\sin\theta_k,\,b_k=\cos\theta_k$. It follows from Lemma~\ref{lm01} that the determinant of the coefficient matrix equals to
\begin{align*}
   \prod_{k=0}^n{n \choose k}\prod_{0\le i<j\le n}(a_i b_j-a_j b_i)=\prod_{k=0}^n{n \choose k}\prod_{0\le i<j\le n}\sin(\theta_i-\theta_j)\neq 0.
\end{align*}
That is to say, the coefficient matrix $\tensor{M}$ is invertible. Thus the unknowns $H_l(x)H_{n-l}(y)$ can be expressed as
\begin{equation*}
   H_l(x)H_{n-l}(y)=\sum_{k=0}^n \tensor{M}^{-1}_{l,k}H_n(x\cos\theta_k+y\sin\theta_k),\quad l=0,\dots,n.
\end{equation*}
Set $x=X(f),\,y=Y(g)$ in the above equation displayed, Eq.(\ref{man001}) is induced. Since $\norm{f\cos \theta_k}^2_{\FH}+\norm{g\sin \theta_k}^2_{\FH}=1$, Eq.(\ref{typ1}) can be deduced from the following
Proposition~\ref{p0401} and the definition of $\mathcal{H}_k(X)\mathcal{H}_l(Y)$.
{\hfill\large{$\Box$}}
\vskip 0.5cm

We divide the proof of Theorem~\ref{co2} into four propositions.
\begin{prop}\label{p0401} 
Suppose that $Y$ is an independent copy of the real isonormal Gaussian process $X$ over the Hilbert space $\FH$. Then a realization of the isonormal Gaussian process $W$ over the Hilbert space direct sum $\FH\oplus \FH$ is
\begin{align*}
      W(h,f)&=X(h)+Y(f),\quad\forall h,f\in\FH.
   \end{align*}
  That is to say, $\set{X(h)+Y(f):\,\forall h,f\in\FH}$ is a centered Gaussian family such that
 $\forall h_1,f_1,h_2,f_2\in \FH$,
     \begin{align*}
      E[(X(h_1)+Y(f_1))(X(h_2)+Y(f_2))]= \innp{(h_1,f_1),\,(h_2,f_2)}_{\FH\oplus \FH}.
   \end{align*}
\end{prop}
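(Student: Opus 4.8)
The plan is to verify directly that the family $\{X(h)+Y(f):h,f\in\FH\}$ is a centered Gaussian family with the covariance structure prescribed by the inner product on $\FH\oplus\FH$; by Definition~\ref{nott01}, this is exactly what it means for $W(h,f):=X(h)+Y(f)$ to be a realization of the isonormal process over $\FH\oplus\FH$. First I would note that each $X(h)$ and each $Y(f)$ is centered Gaussian, and that any finite linear combination of the variables $X(h_i)+Y(f_i)$ is again a finite linear combination of the jointly Gaussian family $\{X(h),Y(f)\}$; since $X$ and $Y$ are independent Gaussian processes, their union is jointly Gaussian, so $W$ is a centered Gaussian family. The only substantive thing to check is therefore the covariance identity.

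For the covariance I would simply expand
\begin{align*}
  E[(X(h_1)+Y(f_1))(X(h_2)+Y(f_2))]
  &= E[X(h_1)X(h_2)] + E[X(h_1)Y(f_2)]\\
  &\quad + E[Y(f_1)X(h_2)] + E[Y(f_1)Y(f_2)].
\end{align*}
Because $Y$ is an independent copy of $X$, the two cross terms vanish: $E[X(h_1)Y(f_2)]=E[X(h_1)]\,E[Y(f_2)]=0$ and likewise $E[Y(f_1)X(h_2)]=0$. The remaining two terms are handled by the defining isometry of the isonormal processes, $E[X(h_1)X(h_2)]=\innp{h_1,h_2}_{\FH}$ and $E[Y(f_1)Y(f_2)]=\innp{f_1,f_2}_{\FH}$. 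Summing gives $\innp{h_1,h_2}_{\FH}+\innp{f_1,f_2}_{\FH}$, which is precisely $\innp{(h_1,f_1),(h_2,f_2)}_{\FH\oplus\FH}$ by the definition of the direct-sum inner product recalled just before the statement.

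This being essentially a bookkeeping argument, there is no genuine obstacle; the one point that deserves care is justifying that the joint law of the whole family is Gaussian rather than merely that each individual variable is. I would make this precise by using the concrete series representations \eqref{xh} and \eqref{yh}: since $X(h)=\sum_i\innp{h,e_i}_{\FH}\xi_i$ and $Y(f)=\sum_i\innp{f,e_i}_{\FH}\eta_i$ with $\{\xi_i\}\cup\{\eta_i\}$ an i.i.d. standard Gaussian sequence, every variable $X(h)+Y(f)$ is an $L^2$-limit of linear combinations of a single independent Gaussian sequence, whence the family is jointly Gaussian and the cross-covariances between the $\xi$ and $\eta$ blocks are automatically zero. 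With centering, Gaussianity, and the covariance identity established, the map $(h,f)\mapsto X(h)+Y(f)$ satisfies all the requirements of Definition~\ref{nott01} over the Hilbert space $\FH\oplus\FH$, completing the proof.
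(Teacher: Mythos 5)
Your proposal is correct and follows essentially the same route as the paper: a direct verification that the family is centered Gaussian, followed by expanding the covariance and using independence to kill the cross terms. The only cosmetic difference is in justifying joint Gaussianity — the paper collapses any finite linear combination $\sum_j a_j[X(h_j)+Y(f_j)]=X(\sum_j a_j h_j)+Y(\sum_j a_j f_j)$ into a sum of two independent one-dimensional Gaussians via linearity of $X$ and $Y$, whereas you invoke the series representations \eqref{xh} and \eqref{yh}; both arguments are valid and equally elementary.
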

\begin{proof}
Note that $Y$ is an independent copy of the isonormal Gaussian process $X$, we have that $\sum_{j}a_j[X(h_j)+Y(f_j)]=X(\sum_{j}a_j h_j)+Y(\sum_{j}a_j f_j)$ is 1-dimensional centered normal random variable, where $a_j\in \Rnum$. Thus $\set{X(h)+Y(f):\,\forall h,f\in\FH}$ is a centered Gaussian family. The independent property of $X$ and $Y$ implies that
   \begin{align*}
     E[(X(h_1)+Y(f_1))(X(h_2)+Y(f_2))]&=\innp{h_1,\,h_2}_{\FH}+\innp{f_1,\,f_2}_{\FH}\\
     &= \innp{(h_1,f_1),\,(h_2,f_2)}_{\FH\oplus \FH}.
   \end{align*}
\end{proof}


\begin{prop}\label{pm10}
   If $\FH_{\Cnum}\ni \mathfrak{h}=u+\mi v$ such that $\norm{\mathfrak{h}}_{{\FH}_{\Cnum}}=\sqrt2$, $0<\theta_n<\cdots<\theta_0<\pi$ and $f_i+\mi g_i=\frac{1}{\sqrt2} e^{\mi \theta_i}\bar{\mathfrak{h}}$, then \begin{equation}\label{dp1}
   J_{k,n-k}(Z(\mathfrak{h}))=\sum_{i}\tilde{c}_i H_n(X(f_i)+Y(g_i))
\end{equation} holds, where $\tilde{c}_i$ is a constant depending on $\theta_k$ given by (\ref{ci}).
\end{prop}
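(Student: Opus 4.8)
The plan is to realize $J_{k,n-k}(Z(\mathfrak{h}))$ inside the real Wiener--It\^o chaos $\mathcal{H}_n(W)$ by composing two dictionaries already established: the pointwise real/complex Hermite relation (\ref{h2j}) of Proposition~\ref{2dim2} and the product formula (\ref{man001}) of Theorem~\ref{mth00}. The bridge between them is the observation that the real and imaginary parts of the complex Gaussian $Z(\mathfrak{h})$ form an orthonormal pair of $W$-variables.

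Concretely, first I would write $Z(\mathfrak{h})$ through its real and imaginary parts. For $\mathfrak{h}=u+\mi v$, Definition~\ref{nott02} together with $X_{\Cnum}(\mathfrak{h})=X(u)+\mi X(v)$ and $Y_{\Cnum}(\mathfrak{h})=Y(u)+\mi Y(v)$ gives
\begin{equation*}
 Z(\mathfrak{h})=\frac{X(u)-Y(v)}{\sqrt2}+\mi\,\frac{X(v)+Y(u)}{\sqrt2}=:P+\mi Q .
\end{equation*}
Using Proposition~\ref{p0401} I would set $\alpha=(u/\sqrt2,-v/\sqrt2)$ and $\beta=(v/\sqrt2,u/\sqrt2)$ in $\FH\oplus\FH$, so that $P=W(\alpha)$ and $Q=W(\beta)$. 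The hypothesis $\norm{\mathfrak{h}}_{\FH_{\Cnum}}^2=\norm{u}_{\FH}^2+\norm{v}_{\FH}^2=2$ then forces $\norm{\alpha}=\norm{\beta}=1$ and $\innp{\alpha,\beta}_{\FH\oplus\FH}=\frac12(\innp{u,v}_{\FH}-\innp{v,u}_{\FH})=0$, so $(P,Q)$ is a standard bivariate Gaussian with independent $\mathcal N(0,1)$ coordinates. This orthonormal identification is the crux that couples the complex chaos of $Z$ to the real chaos of $W$.

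Next I would expand $J_{k,n-k}(Z(\mathfrak{h}))=J_{k,n-k}(P+\mi Q)$ by the first identity of (\ref{h2j}) (with $x=P$, $y=Q$ and indices $m\to k$, $l\to n$), producing $J_{k,n-k}(Z(\mathfrak{h}))$ as an explicit linear combination of the products $H_j(P)H_{n-j}(Q)=H_j(W(\alpha))H_{n-j}(W(\beta))$, $0\le j\le n$. Since $\alpha,\beta$ are orthonormal, the invariance identity (\ref{hn0}) underlying (\ref{man001}) applies with $X(f),Y(g)$ replaced by $W(\alpha),W(\beta)$, so each product rewrites as $\sum_i\tensor{M}^{-1}_{j,i}H_n\big(\cos\theta_i\,W(\alpha)+\sin\theta_i\,W(\beta)\big)$. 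A short computation of $\cos\theta_i\,\alpha+\sin\theta_i\,\beta$ identifies its two components as exactly the $f_i,g_i$ of the statement, namely $f_i+\mi g_i=\frac{1}{\sqrt2}e^{\mi\theta_i}\bar{\mathfrak{h}}$, while $\norm{f_i}_{\FH}^2+\norm{g_i}_{\FH}^2=\cos^2\theta_i+\sin^2\theta_i=1$ ensures $\cos\theta_i W(\alpha)+\sin\theta_i W(\beta)=X(f_i)+Y(g_i)$ is standard normal, so $H_n$ of it lies in $\mathcal{H}_n(W)$.

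Finally I would interchange the $j$- and $i$-summations and collect the coefficient of each $H_n(X(f_i)+Y(g_i))$, which produces $\tilde{c}_i$ as in (\ref{ci}). I expect the genuinely delicate part to be this last bookkeeping, where the powers of $\mi$ and the alternating signs coming from (\ref{h2j}) must be tracked through the index shift dictated by $\tensor{M}^{-1}$; the orthonormality check of the second paragraph and the verification $\cos\theta_i\alpha+\sin\theta_i\beta=(f_i,g_i)$ are where the structural content sits, everything else being a mechanical composition of (\ref{h2j}) and (\ref{man001}).
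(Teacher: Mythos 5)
Your proposal is correct and follows essentially the same route as the paper's proof: decompose $Z(\mathfrak{h})=\frac{1}{\sqrt2}[X(u)-Y(v)]+\frac{\mi}{\sqrt2}[X(v)+Y(u)]$, expand $J_{k,n-k}$ via the pointwise relation (\ref{h2j}), convert each product $H_j H_{n-j}$ through the matrix-inversion identity (\ref{man001}), and verify $\cos\theta_i\,\RE Z(\mathfrak{h})+\sin\theta_i\,\IM Z(\mathfrak{h})=X(f_i)+Y(g_i)$ with $f_i+\mi g_i=\frac{1}{\sqrt2}e^{\mi\theta_i}\bar{\mathfrak{h}}$ before collecting coefficients into $\tilde{c}_i$. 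Your additional framing of the real and imaginary parts as $W(\alpha),W(\beta)$ for an explicit orthonormal pair $\alpha,\beta\in\FH\oplus\FH$ is a harmless (and correct) embellishment of what the paper does implicitly via (\ref{aaa}).
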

\begin{proof}
(\ref{xh2}) and (\ref{zh}) imply that
\begin{align*}
   Z(\mathfrak{h})&=\frac{X_{\Cnum}(\mathfrak{h})+\mi Y_{\Cnum}(\mathfrak{h})}{\sqrt2}
   =\frac{1}{\sqrt2}[X(u)-Y(v)]+\frac{\mi }{\sqrt2} [X(v)+Y(u) ].
\end{align*}
 It follows from the fundamental relation Eq.(\ref{h2j}) and Theorem~\ref{mth00} that we have
\begin{align}
   J_{k,n-k}(Z(\mathfrak{h}))&= \sum_{j}c_j H_j(\RE(Z(\mathfrak{h})))H_{n-j}(\IM(Z(\mathfrak{h})))\nonumber\\
   &= \sum_{j}c_j \sum_{i} \tensor{M}^{-1}_{j,i} H_n\big(\cos\theta_i\RE(Z(\mathfrak{h}))+\sin\theta_i \IM(Z(\mathfrak{h}))\big), \nonumber\\
   &=\sum_{i}\tilde{c}_i H_n\big(\cos\theta_i\RE(Z(\mathfrak{h}))+\sin\theta_i \IM(Z(\mathfrak{h}))\big)   \label{jkl}
 \end{align} where $c_j={\mi^{n-k}}\sum_{r+s=j}{k \choose r}{n-k \choose s}(-1)^{n-k-s}$,\,$\tilde{c}_i=\sum_{j}c_j\tensor{M}^{-1}_{j,i}$ are constant.
Since
 \begin{align}
   \cos\theta_i \RE(Z(\mathfrak{h}))+\sin\theta_i \IM(Z(\mathfrak{h}))&= \frac{ \cos\theta_i}{\sqrt2}[X(u)-Y(v)]+ \frac{\sin\theta_i}{\sqrt2} [X(v)+Y(u) ]\nonumber \\
   &=X(f_i)+Y(g_i), \label{aaa}
 \end{align}
 and $\norm{f_i}_{\FH}^2+\norm{g_i}_{\FH}^2=\frac12\norm{\mathfrak{h}}^2_{\FH_{\Cnum}}=1$,
we have that
\begin{align*}
  H_n(\cos\theta_i\RE(Z(\mathfrak{h}))+ \sin\theta_i\IM(Z(\mathfrak{h})))=H_n(X(f_i)+Y(g_i))
\end{align*}
Inserting the above equation displayed into (\ref{jkl}), we get Eq.(\ref{dp1}).
\end{proof}
\begin{prop}\label{p0402}
   If $f,\,g\in \FH$ such that $\norm{f}^2_{\FH}+\norm{g}^2_{\FH}=1$ and $\mathfrak{h}=\sqrt2 e^{\mi \theta}(f-\mi g)$ with $\theta\in \Rnum $,
   then
    \begin{equation}\label{dp2}
  H_n(X(f)+Y(g))=\sum_{k=0}^n d_k J_{k,n-k}(Z(\mathfrak{h}))
\end{equation}
holds, where $d_k$ is a constant depending on $\theta$ given by (\ref{dk}).
\end{prop}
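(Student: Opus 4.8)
The plan is to run the computation of Proposition~\ref{pm10} in reverse: instead of decomposing $J_{k,n-k}$ into real chaoses, I expand $H_n(X(f)+Y(g))$ by the Hermite invariance (addition) formula and then convert the resulting products of real Hermite polynomials into complex ones via the second identity of \eqref{h2j}. The starting point is the linear identity
\[
X(f)+Y(g)=\cos\theta\,\RE\big(Z(\mathfrak{h})\big)+\sin\theta\,\IM\big(Z(\mathfrak{h})\big).
\]
This is exactly \eqref{aaa} specialized to the single angle $\theta$: writing $\mathfrak{h}=u+\mi v$ with $u=\sqrt2(\cos\theta\, f+\sin\theta\, g)$ and $v=\sqrt2(\sin\theta\, f-\cos\theta\, g)$, and using $\RE Z(\mathfrak{h})=\tfrac1{\sqrt2}[X(u)-Y(v)]$, $\IM Z(\mathfrak{h})=\tfrac1{\sqrt2}[X(v)+Y(u)]$ from \eqref{xh2}--\eqref{zh}, the coefficients of $f$ and $g$ collapse by $\cos^2\theta+\sin^2\theta=1$. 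Equivalently, since $\mathfrak{h}=\sqrt2 e^{\mi\theta}(f-\mi g)$ gives $\tfrac1{\sqrt2}e^{\mi\theta}\bar{\mathfrak{h}}=f+\mi g$, this is the $\theta_i=\theta$ instance of the construction already used in Proposition~\ref{pm10}.

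Because $(\cos\theta)^2+(\sin\theta)^2=1$, the invariance property \eqref{hn0} (the one-angle case of \eqref{man000}) applies as a polynomial identity with $x=\RE Z(\mathfrak{h})$ and $y=\IM Z(\mathfrak{h})$, giving
\[
H_n\big(X(f)+Y(g)\big)=\sum_{l=0}^n\binom{n}{l}(\cos\theta)^l(\sin\theta)^{n-l}\,H_l\big(\RE Z(\mathfrak{h})\big)H_{n-l}\big(\IM Z(\mathfrak{h})\big).
\]
Next I apply the second fundamental relation in \eqref{h2j} with $x=\RE Z(\mathfrak{h})$, $y=\IM Z(\mathfrak{h})$ and $z=x+\mi y=Z(\mathfrak{h})$, which for each $l$ reads
\[
H_l\big(\RE Z(\mathfrak{h})\big)H_{n-l}\big(\IM Z(\mathfrak{h})\big)=\frac{\mi^{\,n-l}}{2^n}\sum_{k=0}^n\sum_{r+s=k}\binom{l}{r}\binom{n-l}{s}(-1)^s\,J_{k,n-k}\big(Z(\mathfrak{h})\big).
\]

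Substituting the second display into the first and interchanging the two finite sums, the coefficient of $J_{k,n-k}(Z(\mathfrak{h}))$ becomes
\[
\frac{1}{2^n}\sum_{r+s=k}(-1)^s\sum_{l=0}^n\binom{n}{l}\binom{l}{r}\binom{n-l}{s}(\cos\theta)^l(\mi\sin\theta)^{n-l},
\]
after merging $\mi^{\,n-l}(\sin\theta)^{n-l}=(\mi\sin\theta)^{n-l}$; this is precisely $d_k$ as defined in \eqref{dk}, establishing \eqref{dp2}. I expect no genuine obstacle: the sole conceptual input is the linear identity of the first paragraph, and everything afterward is the substitution of two polynomial identities already in hand. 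The one point that demands care is the index bookkeeping—correctly aligning $(l,k,r,s)$ across the two relations and verifying that the merged double sum reproduces \eqref{dk} \emph{verbatim} rather than a superficially different but numerically equal expression.
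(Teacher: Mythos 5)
Your proposal is correct and follows essentially the same route as the paper's own proof: the linear identity $X(f)+Y(g)=\cos\theta\,\RE Z(\mathfrak{h})+\sin\theta\,\IM Z(\mathfrak{h})$ (the paper's Eq.~(\ref{aaa}) with $\theta_i=\theta$), then the Hermite invariance formula (\ref{hn0}), then the second relation in (\ref{h2j}) with coefficients collected into $d_k$ of (\ref{dk}). Your write-up merely makes explicit the details the paper compresses into its three-line display, and the bookkeeping you carry out does reproduce (\ref{dk}) verbatim.
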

\begin{proof}
It follows from Eq.(\ref{aaa}), Eq.(\ref{hn0}) and the fundamental relation Eq.(\ref{h2j}) that
\begin{align*}
    H_n(X(f)+Y(g))&=H_n(\cos\theta\, \RE(Z(\mathfrak{h}))+\sin\theta\, \IM(Z(\mathfrak{h})))\\
    &=\sum_{l=0}^n {n \choose l}(\cos\theta)^l(\sin\theta)^{n-l}H_{l}(\RE(Z(\mathfrak{h}))H_{n-l}( \IM(Z(\mathfrak{h}))\\
    &=\sum_{k=0}^n \, d_k J_{k,n-k}(Z(\mathfrak{h})).
\end{align*}
\end{proof}
It follows from Proposition~\ref{p0401}-\ref{p0402} and the definition of $\mathcal{H}^{\Cnum}_n(W)$ that one has the following corollary.
\begin{cor}\label{ccc1} Let $\mathcal{H}^{\Cnum}_{n}(W)$ be as in Equality (\ref{hcx}). Then
   $\mathcal{H}^{\Cnum}_{n}(W)$ is also the closed linear subspace of $L_{\Cnum}^{2}(\Omega, \sigma(W) ,P)$ generated by the random variable of the type $\set{J_{k,l}(Z(\mathfrak{h})):k+l=n,\,\mathfrak{h}\in \FH_{\Cnum},\norm{\mathfrak{h}}_{\FH_{\Cnum}}=\sqrt2}$.
\end{cor}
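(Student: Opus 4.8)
The plan is to deduce the set equality in the corollary by a double inclusion, reading off one inclusion from each of Propositions~\ref{pm10} and~\ref{p0402}. Write $\mathcal{V}_n$ for the closed complex-linear subspace of $L_{\Cnum}^{2}(\Omega,\sigma(W),P)$ generated by $\set{J_{k,l}(Z(\mathfrak{h})):k+l=n,\ \norm{\mathfrak{h}}_{\FH_\Cnum}=\sqrt2}$; the goal is $\mathcal{H}^{\Cnum}_{n}(W)=\mathcal{V}_n$. Throughout I would use the realization $W(h,f)=X(h)+Y(f)$ of Proposition~\ref{p0401}, so that the generators of the real chaos $\mathcal{H}_n(W)$ are exactly the variables $H_n(X(f)+Y(g))$ with $\norm{f}^2_\FH+\norm{g}^2_\FH=1$, and $\mathcal{H}^{\Cnum}_{n}(W)=\mathcal{H}_n(W)+\mi\mathcal{H}_n(W)$ by the definition~(\ref{hcx}).

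First I would prove $\mathcal{H}^{\Cnum}_{n}(W)\subseteq\mathcal{V}_n$. Fix a generator $H_n(X(f)+Y(g))$ with $\norm{f}^2_\FH+\norm{g}^2_\FH=1$ and apply Proposition~\ref{p0402} with any fixed $\theta\in\Rnum$: this expresses it as the finite combination $\sum_{k=0}^n d_k\,J_{k,n-k}(Z(\mathfrak{h}))$ with $\mathfrak{h}=\sqrt2\,e^{\mi\theta}(f-\mi g)$. The only thing to check is the normalization $\norm{\mathfrak{h}}_{\FH_\Cnum}=\sqrt2$, which follows from $\norm{\mathfrak{h}}^2_{\FH_\Cnum}=2(\norm{f}^2_\FH+\norm{g}^2_\FH)=2$. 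Hence each such generator lies in $\mathcal{V}_n$, giving $\mathcal{H}_n(W)\subseteq\mathcal{V}_n$; since $\mathcal{V}_n$ is a complex subspace we also get $\mi\mathcal{H}_n(W)\subseteq\mathcal{V}_n$, and therefore $\mathcal{H}^{\Cnum}_{n}(W)=\mathcal{H}_n(W)+\mi\mathcal{H}_n(W)\subseteq\mathcal{V}_n$.

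Next I would prove the reverse inclusion $\mathcal{V}_n\subseteq\mathcal{H}^{\Cnum}_{n}(W)$. Fix a generator $J_{k,l}(Z(\mathfrak{h}))$ with $k+l=n$ and $\norm{\mathfrak{h}}_{\FH_\Cnum}=\sqrt2$, choose any admissible sequence $0<\theta_n<\cdots<\theta_0<\pi$, and apply Proposition~\ref{pm10}: this writes it as $\sum_i\tilde c_i\,H_n(X(f_i)+Y(g_i))$ with $f_i+\mi g_i=\tfrac1{\sqrt2}e^{\mi\theta_i}\bar{\mathfrak{h}}$. Here the normalization $\norm{f_i}^2_\FH+\norm{g_i}^2_\FH=\tfrac12\norm{\mathfrak{h}}^2_{\FH_\Cnum}=1$ guarantees that each $H_n(X(f_i)+Y(g_i))$ is a real generator of $\mathcal{H}_n(W)$, so the complex combination lies in $\mathcal{H}^{\Cnum}_{n}(W)$. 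As $\mathcal{H}^{\Cnum}_{n}(W)$ is closed and every generator of $\mathcal{V}_n$ lies in it, we conclude $\mathcal{V}_n\subseteq\mathcal{H}^{\Cnum}_{n}(W)$.

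Combining the two inclusions yields $\mathcal{H}^{\Cnum}_{n}(W)=\mathcal{V}_n$, which is the assertion. I expect no genuine obstacle here: both propositions are already established, and the entire content of the corollary is the observation that the change of variables $\mathfrak{h}=\sqrt2\,e^{\mi\theta}(f-\mi g)$ and its inverse carry the two normalization constraints ($\norm{f}^2_\FH+\norm{g}^2_\FH=1$ and $\norm{\mathfrak{h}}_{\FH_\Cnum}=\sqrt2$) onto one another, so that each generating family transforms into finite complex-linear combinations of the other. The one point deserving care---the closest thing to a subtlety---is that the identities of Propositions~\ref{pm10} and~\ref{p0402} hold for an \emph{arbitrary} fixed phase (respectively, fixed increasing sequence of angles), so no compatibility of the $\theta$-parameters across the two directions is needed; each inclusion is checked on generators and then extended to the closed spans by linearity and closedness.
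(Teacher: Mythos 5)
Your proof is correct and is essentially the paper's own argument spelled out: the paper deduces Corollary~\ref{ccc1} in one line from Propositions~\ref{p0401}--\ref{p0402} and the definition (\ref{hcx}), which is exactly your two inclusions via Proposition~\ref{p0402} (generators of $\mathcal{H}_n(W)$ into $\mathcal{V}_n$) and Proposition~\ref{pm10} (generators of $\mathcal{V}_n$ into $\mathcal{H}^{\Cnum}_n(W)$), with the same normalization bookkeeping $\norm{\mathfrak{h}}^2_{\FH_{\Cnum}}=2(\norm{f}^2_{\FH}+\norm{g}^2_{\FH})$.
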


\begin{prop}\label{pp3}
   Let $\zeta_1,\zeta_2\sim \mathcal{CN}(0,2)$ be jointly symmetric complex Gaussian (i.e., $c_1\zeta_1+c_2\zeta_2$ is a symmetric complex Gaussian variable for any $c_i\in \Cnum$ \cite{ito}.).\footnote{It is necessary that $E[\zeta_1\zeta_2]=0$.} Then for all $m_1,n_1,m_2,n_2\in \Nnum$:
   \begin{equation*}
      E[J_{m_1,n_1}(\zeta_1)\overline{J_{m_2,n_2}(\zeta_2)}]=\left\{
      \begin{array}{ll}
      m_1!n_1!(E[\zeta_1\bar{\zeta}_2])^{m_1}(E[\bar{\zeta}_1\zeta_2])^{n_1}, \quad &\text{if\quad} m_1=m_2,\,n_1=n_2\\
      0,\quad &\text{otherwise.}
      \end{array}
      \right.
   \end{equation*}
\end{prop}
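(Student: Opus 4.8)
The plan is to prove this orthogonality-type formula for complex Hermite polynomials evaluated at jointly complex Gaussian variables by reducing everything to the single-variable moment identities already available in the paper. First I would record the two structural facts that make the computation tractable. The complex Hermite polynomial $J_{m,n}(z)$ (with $\rho=2$) is built from the creation operators in \eqref{itldefn}, and the inversion formula \eqref{singlenomial} expresses ordinary monomials in terms of the $J_{r,s}$; conversely each $J_{m,n}(z)$ is a polynomial of degree $m$ in $z$ and degree $n$ in $\bar z$. The key probabilistic input is that $\zeta_1,\zeta_2$ are \emph{jointly} symmetric complex Gaussian, so $E[\zeta_1\zeta_2]=0$ (as noted in the footnote) and likewise $E[\zeta_i^2]=0$; the only nonvanishing second moments are $E[\zeta_i\bar\zeta_j]$. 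This is exactly the kind of structure under which the $J_{m,n}$ behave like an orthogonal Wick-ordered family.

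The cleanest route I would take is to expand both factors using the single-variable generating function or the binomial-type expansion of $J_{m,n}$ into monomials, then compute the mixed moment of a product of monomials $\zeta_1^{a}\bar\zeta_1^{b}\,\bar\zeta_2^{c}\,\zeta_2^{d}$ by Wick's theorem (Isserlis' formula) for complex Gaussians. Under the symmetric-Gaussian hypothesis, a pairing contributes only when it matches an unbarred variable with a barred variable, so nonzero contractions are of the form $E[\zeta_i\bar\zeta_j]$ only. This already forces the total number of unbarred $\zeta_1$'s plus unbarred $\zeta_2$'s to equal the total number of barred ones in a balanced way, which is where the constraints $m_1=m_2$ and $n_1=n_2$ will emerge. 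Concretely, I would track the ``charge'' of a monomial (number of $\zeta$ minus number of $\bar\zeta$ of each index) and observe that $J_{m,n}(z)$ has a definite net holomorphic degree $m-n$; since $E[\zeta_i\bar\zeta_j]$-contractions preserve charge, the whole expectation vanishes unless the charges of the two $J$-factors cancel, giving $m_1-n_1=m_2-n_2$, and a second degree-count gives the full pair of equalities.

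An alternative and perhaps slicker approach, if one prefers to avoid bookkeeping over all Wick pairings, is to use the eigenfunction/operator characterization from \eqref{atheta}. The $J_{m,n}$ are the eigenfunctions of the complex Ornstein--Uhlenbeck generator $A_\theta$ with eigenvalue $-[(m+n)\cos\theta+\mi(m-n)\sin\theta]$, and this operator is normal with respect to the relevant Gaussian measure. When $\zeta_1=\zeta_2$ (or more generally when the two variables are driven by the same complex OU semigroup with a correlation parameter), the spectral orthogonality of eigenfunctions with distinct eigenvalues gives vanishing unless $(m_1,n_1)=(m_2,n_2)$, the eigenvalue being injective in $(m,n)$ for generic $\theta$. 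I would then identify the normalizing constant on the diagonal from the known orthonormality of $(m!n!\,2^{m+n})^{-1/2}J_{m,n}$ in $L^2_{\Cnum}(\Cnum,\nu)$ stated right after \eqref{itldefn}, which pins down the factor $m_1!n_1!$ with $\rho=2$ absorbed, and then insert the general covariances $E[\zeta_1\bar\zeta_2],\,E[\bar\zeta_1\zeta_2]$ by a bilinearity/rescaling argument to recover $(E[\zeta_1\bar\zeta_2])^{m_1}(E[\bar\zeta_1\zeta_2])^{n_1}$.

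The main obstacle, I expect, is not the vanishing part but getting the \emph{exact} constant on the diagonal with the correct powers of the two distinct covariances $E[\zeta_1\bar\zeta_2]$ and $E[\bar\zeta_1\zeta_2]$ (which need not be real or equal when $\zeta_1\neq\zeta_2$). The orthonormality statement only handles the equal-variable, unit-covariance case, so I must either (a) carefully run Wick's theorem and count the number of perfect matchings that pair each of the $m_1$ holomorphic slots of the first factor against the $m_1$ anti-holomorphic slots associated with $\zeta_2$ and symmetrically for the $n_1$ slots — yielding the $m_1!\,n_1!$ combinatorial factor — or (b) justify a bilinear-extension argument that promotes the diagonal constant from the $\zeta_1=\zeta_2$ case to general jointly Gaussian $\zeta_1,\zeta_2$ with the covariances entering to the powers $m_1$ and $n_1$ respectively. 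The bookkeeping that separates which slots contract with which, and the verification that cross-contractions $E[\zeta_1\bar\zeta_1]$ within a single factor do not survive the Wick-ordering built into $J_{m,n}$, is the delicate step; I would handle it by leaning on the monomial-inversion identity \eqref{singlenomial} to make the Wick-ordered (no self-contraction) structure explicit.
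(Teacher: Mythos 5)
Your Wick-theorem route is workable in principle, but as written the proposal has a genuine gap exactly where the proposition's content lies: the off-diagonal vanishing. Charge conservation only gives $m_1-n_1=m_2-n_2$, and your claim that a ``second degree-count gives the full pair of equalities'' is not an argument --- no counting can do this, because the individual monomial moments do \emph{not} vanish when the total degrees differ. Concretely, take $(m_1,n_1)=(2,1)$ and $(m_2,n_2)=(1,0)$, which passes the charge test. With $\rho=2$ one has $J_{2,1}(z)=z^2\bar z-4z$ and $\overline{J_{1,0}(\zeta_2)}=\bar\zeta_2$, and Isserlis' theorem gives
\begin{equation*}
E[\zeta_1^2\bar\zeta_1\bar\zeta_2]=2E[\abs{\zeta_1}^2]\,E[\zeta_1\bar\zeta_2]=4E[\zeta_1\bar\zeta_2]\neq 0\quad\text{in general,}
\end{equation*}
so that $E[J_{2,1}(\zeta_1)\overline{J_{1,0}(\zeta_2)}]=4E[\zeta_1\bar\zeta_2]-4E[\zeta_1\bar\zeta_2]=0$ vanishes only through exact cancellation between the self-contraction terms and the lower-order term $-4z$ of the Hermite polynomial. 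That cancellation --- which you defer at the end as ``the delicate step'' --- is therefore not a final bookkeeping detail but the whole theorem; the proposal names the difficulty without supplying the idea that resolves it. The spectral alternative does not close the gap either: eigenfunction orthogonality of $A_\theta$ takes place in $L^2_{\Cnum}(\Cnum,\nu)$, i.e.\ for functions of a \emph{single} complex Gaussian, so it covers only $\zeta_1=\zeta_2$; and since the covariances enter the answer polynomially, to the specific powers $m_1$ and $n_1$, there is no bilinear structure along which an ``extension by bilinearity/rescaling'' could proceed (rescaling $\zeta_2$ also changes its variance, destroying the normalization built into $J_{m_2,n_2}$).

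For contrast, the paper disposes of both the vanishing and the constant simultaneously with a short generating-function computation: by the Gaussian Laplace transform,
\begin{equation*}
E\big(\exp\set{s\bar\zeta_1+\bar s\zeta_1-2\abs{s}^2}\exp\set{t\bar\zeta_2+\bar t\zeta_2-2\abs{t}^2}\big)=\exp\set{s\bar t\,E[\bar\zeta_1\zeta_2]+\bar s t\,E[\zeta_1\bar\zeta_2]},
\end{equation*}
and applying $\partial_{\bar s}^{m_1}\partial_s^{n_1}\partial_t^{m_2}\partial_{\bar t}^{n_2}$ at $s=t=0$ extracts $E[J_{m_1,n_1}(\zeta_1)\overline{J_{m_2,n_2}(\zeta_2)}]$ on the left, while on the right the relevant Taylor coefficient is nonzero only if $m_1=m_2$ and $n_1=n_2$, in which case it equals $m_1!n_1!(E[\zeta_1\bar\zeta_2])^{m_1}(E[\bar\zeta_1\zeta_2])^{n_1}$. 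If you prefer to keep your combinatorial route, the missing ingredient is the pairing theorem for Wick products of jointly Gaussian variables (see the Wick calculus in \cite{jan}, whose Example 3.32 the paper already invokes to identify $J_{m,n}(\zeta)$ with the Wick product of $\zeta^m\bar\zeta^n$): only complete cross-pairings between the two Wick factors survive, which yields the constraint $m_1=m_2$, $n_1=n_2$ and the factor $m_1!\,n_1!$ at once. Citing or reproving that theorem is what would close your gap.
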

\begin{proof}
By the Laplace transform of the jointly Gaussian distribution, we have that for all $s,t\in \Cnum$,
   \begin{align*}
      &E\big( \exp\set{s\bar{\zeta}_1+\bar{s}\zeta_1-2|s|^2}\exp\set{t\bar{\zeta}_2+\bar{t}\zeta_2-2|t|^2}\big)\\
      =&\exp\set{E(s\bar{\zeta}_1+\bar{s}\zeta_1)(t\bar{\zeta}_2+\bar{t}\zeta_2)}\\
      =&\exp\set{s\bar{t}E[\bar{\zeta}_1\zeta_2]+\bar{s}tE[\zeta_1\bar{\zeta}_2]}.
   \end{align*}
   Taking the partial derivative $\frac{\partial^{m_1+n_1}}{\partial \bar{s}^{m_1}\partial s^{n_1}}\frac{\partial^{m_2+n_2}}{\partial t^{m_2}\partial \bar{t}^{n_2}}$ at $s=t=0$ in both sides of the above equality yields
   \begin{eqnarray*}
      E[J_{m_1,n_1}(\zeta_1)\overline{J_{m_2,n_2}(\zeta_2)}]&=&E(J_{m_1,n_1}(\zeta_1)J_{n_2, m_2}(\zeta_2))\\
      &=&\left\{
      \begin{array}{ll}
      m_1!n_1!(E[\zeta_1\bar{\zeta}_2])^{m_1}(E[\bar{\zeta}_1\zeta_2])^{n_1}, \text{\,if\,} (m_1,n_1)=(m_2,n_2)\\
      0,\quad \text{otherwise.}
      \end{array}
      \right.
   \end{eqnarray*}
\end{proof}
\noindent{\it Proof of Theorem~\ref{co2}.\,}
Eq.(\ref{man010}) and Eq.(\ref{man011}) are Proposition~\ref{p0402} and Proposition~\ref{pm10} respectively.
Corollary~\ref{ccc1} implies that $\mathcal{H}^{\Cnum}_n(W)=\sum_{k+l=n}\mathscr{H}_{k,l}(Z)$. Corollary~\ref{pp3} implies that $\mathscr{H}_{k,l}(Z)$ are orthogonal for distinct $(k,l)$. Thus (\ref{eq3}) is valid.
{\hfill\large{$\Box$}}
\vskip 0.5cm
As a by-product of Theorem~\ref{co2}, we can give a simple proof of the Wiener-It\^{o} chaos decomposition for complex Gaussian isonormal processes.
\vskip 0.5cm
\noindent{\it An alternative proof of Theorem~\ref{chaos}.\,}
(i) Corollary~\ref{ccc1} implies that two types random variables
   \begin{equation}\label{typ2}
      \set{J_{k,l}(Z(h)):k+l=n,\,h\in \FH_{\Cnum},\norm{h}_{\FH_{\Cnum}}=\sqrt2},
   \end{equation} 
   and  \begin{equation}\label{eqq02}
   \set{H_{n}(X(f)+ Y(g)):\,f,\,g\in \FH,\,\norm{f}^2_{\FH}+\norm{g}^2_{\FH}=1}
 \end{equation}
generate the same linear subspace of $L_{\Cnum}^q(\Omega)$. Since the linear space generated by the class $$ \set{H_{n}(X(f)+ Y(g)):\,n\ge 0,\,f,\,g\in\FH,\,\norm{f}^2_{\FH}+\norm{g}^2_{\FH}=1}$$ is dense in $L_{\Cnum}^q(\Omega)$, the linear space generated by the class (\ref{typ2}) is also dense in $L_{\Cnum}^q(\Omega)$.\\
(ii) Clearly, $\sigma(W)=\sigma{(X,Y)}$.
It follows from the Wiener-It\^{o} chaos decomposition of $W$ that
\begin{align*}
   L_{\Cnum}^2(\Omega, \sigma(W),P)=\bigoplus_{n=0}^{\infty}\mathcal{H}^{{\Cnum}}_n(W)=\bigoplus_{n=0}^{\infty}\bigoplus_{k+l=n}\mathscr{H}_{k,l}=\bigoplus^{\infty}_{k=0}\bigoplus^{\infty}_{l=0}\mathscr{H}_{k,l}.
\end{align*}
{\hfill\large{$\Box$}}

\begin{lem}\label{lm1}
  Suppose $  \varphi(t_1,\dots,t_m,s_1,\dots,s_n)\in \FH_{\Cnum}^{\odot m}\otimes \FH_{\Cnum}^{\odot n}$. Let
  \begin{equation}\label{psi}
     \psi(t_1,\dots,t_n,s_1,\dots,s_m)=\bar{\varphi}(s_1,\dots,s_m,t_1,\dots,t_n),
  \end{equation} then
\begin{equation}\label{oli}
   \overline{\mathscr{I}_{m,n}(\varphi)}=\mathscr{I}_{n,m}(\psi).
\end{equation}
\end{lem}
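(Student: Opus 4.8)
The plan is to verify (\ref{oli}) first on the orthonormal basis of $\FH_{\Cnum}^{\odot m}\otimes\FH_{\Cnum}^{\odot n}$ supplied by Proposition~\ref{ppp1} and then extend it to all $\varphi$ by continuity. Both sides of (\ref{oli}) are conjugate-linear in $\varphi$: the map $\varphi\mapsto\overline{\mathscr{I}_{m,n}(\varphi)}$ is conjugate-linear since $\mathscr{I}_{m,n}$ is linear and complex conjugation is conjugate-linear, while $\varphi\mapsto\psi$ is conjugate-linear (it conjugates and permutes the arguments) followed by the linear map $\mathscr{I}_{n,m}$. Each side is moreover bounded, because conjugation is an $L^2_{\Cnum}(\Omega)$-isometry, the conjugate-plus-swap $\varphi\mapsto\psi$ preserves the $\FH_{\Cnum}^{\otimes(m+n)}$-norm, and $\mathscr{I}_{m,n},\mathscr{I}_{n,m}$ are isometries up to the same constant $\sqrt{m!n!}=\sqrt{n!m!}$ by Proposition~\ref{ppp1}(ii). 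Hence it suffices to check (\ref{oli}) on the dense family of elementary symmetric tensors.

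Fix sequences $\mathbf{m}=\{m_k\},\,\mathbf{n}=\{n_k\}$ with $\abs{\mathbf{m}}=m,\,\abs{\mathbf{n}}=n$ and take the basis element $\varphi=\mathrm{symm}(\otimes_k\mathfrak{e}_k^{\otimes m_k})\otimes\mathrm{symm}(\otimes_k\bar{\mathfrak{e}}_k^{\otimes n_k})$, so that $\mathscr{I}_{m,n}(\varphi)=\sqrt{\mathbf{m}!\mathbf{n}!}\,\mathbf{J}_{\mathbf{m},\mathbf{n}}$ by (\ref{mulint}). The essential computational input is the conjugation relation (\ref{jbar}), namely $\overline{J_{p,q}(z)}=J_{q,p}(z)$, which is an identity of functions on $\Cnum$ and therefore remains valid after substituting the random argument $z=\sqrt2\,Z(\mathfrak{e}_k)$. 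Applying it factorwise to the definition (\ref{fourier}) yields
\[
\overline{\mathbf{J}_{\mathbf{m},\mathbf{n}}}=\prod_k\frac{1}{\sqrt{2^{m_k+n_k}m_k!n_k!}}\,\overline{J_{m_k,n_k}(\sqrt2\,Z(\mathfrak{e}_k))}=\prod_k\frac{1}{\sqrt{2^{n_k+m_k}n_k!m_k!}}\,J_{n_k,m_k}(\sqrt2\,Z(\mathfrak{e}_k))=\mathbf{J}_{\mathbf{n},\mathbf{m}}.
\]
Consequently $\overline{\mathscr{I}_{m,n}(\varphi)}=\sqrt{\mathbf{n}!\mathbf{m}!}\,\mathbf{J}_{\mathbf{n},\mathbf{m}}$, which by (\ref{mulint}) equals $\mathscr{I}_{n,m}\big(\mathrm{symm}(\otimes_k\mathfrak{e}_k^{\otimes n_k})\otimes\mathrm{symm}(\otimes_k\bar{\mathfrak{e}}_k^{\otimes m_k})\big)$.

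It then remains to identify this last tensor with $\psi$. Conjugating $\varphi$ interchanges $\mathfrak{e}_k$ and $\bar{\mathfrak{e}}_k$, so $\bar{\varphi}(t_1,\dots,t_m,s_1,\dots,s_n)$ factors as $\mathrm{symm}(\otimes_k\bar{\mathfrak{e}}_k^{\otimes m_k})$ in the $t$-variables times $\mathrm{symm}(\otimes_k\mathfrak{e}_k^{\otimes n_k})$ in the $s$-variables. Substituting according to the definition (\ref{psi}), $\psi(t_1,\dots,t_n,s_1,\dots,s_m)=\bar{\varphi}(s_1,\dots,s_m,t_1,\dots,t_n)$, feeds the block $(s_1,\dots,s_m)$ into the first factor and $(t_1,\dots,t_n)$ into the second, producing $\psi=\mathrm{symm}(\otimes_k\mathfrak{e}_k^{\otimes n_k})\otimes\mathrm{symm}(\otimes_k\bar{\mathfrak{e}}_k^{\otimes m_k})\in\FH_{\Cnum}^{\odot n}\otimes\FH_{\Cnum}^{\odot m}$. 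This is exactly the argument of $\mathscr{I}_{n,m}$ obtained above, establishing (\ref{oli}) on the basis.

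The only delicate point is this final bookkeeping: one must track precisely how the combined conjugation-and-swap encoded in (\ref{psi}) acts on the symmetrized tensors and confirm that it reproduces the index exchange $\mathbf{m}\leftrightarrow\mathbf{n}$, $\mathfrak{e}_k\leftrightarrow\bar{\mathfrak{e}}_k$ generated by (\ref{jbar}); the potential for a mismatch in the order of the two symmetric blocks or in the conjugation of the $\mathfrak{e}_k$ is where care is needed. Once the identity is verified on the basis, the conjugate-linearity and boundedness of both sides extend it by density to all $\varphi\in\FH_{\Cnum}^{\odot m}\otimes\FH_{\Cnum}^{\odot n}$, which completes the proof.
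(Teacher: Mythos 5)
Your proof is correct and takes essentially the same route as the paper's: reduce by (conjugate-)linearity and density to the elementary tensors $\mathrm{symm}(\otimes_k\mathfrak{e}_k^{\otimes m_k})\otimes\mathrm{symm}(\otimes_k\bar{\mathfrak{e}}_k^{\otimes n_k})$, apply the conjugation identity (\ref{jbar}) factorwise in the definition (\ref{fourier}) of $\mathbf{J}_{\mathbf{m},\mathbf{n}}$ to get $\overline{\mathbf{J}_{\mathbf{m},\mathbf{n}}}=\mathbf{J}_{\mathbf{n},\mathbf{m}}$, and identify $\psi$ with the conjugated-and-swapped tensor, exactly as the paper does when it writes $\psi=\bar{g}\otimes\bar{f}$. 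Your explicit verification of conjugate-linearity, boundedness, and the final bookkeeping only makes precise the steps the paper leaves implicit.
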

\begin{proof}
   By the linear property of $\mathscr{I}_{m,n}$, we need only to show that (\ref{oli}) is valid for $ \varphi=f\otimes g$ such that
   \begin{align*}
      f=\mathrm{symm}(\otimes_{k=1}^{\infty}\mathfrak{e}_k^{\otimes m_k}),\quad g= \mathrm{symm}(\otimes_{k=1}^{\infty}\bar{\mathfrak{e}}_k^{\otimes n_k}),
   \end{align*} where $\mathbf{m},\,\mathbf{n}$ and $\mathfrak{e}_k$ are as in Definition~\ref{jmn}.
Clearly, $\psi=\bar{g}\otimes\bar{f}$. It follows from (\ref{jbar}) and (\ref{fourier}) that
   \begin{align*}
      \overline{\mathscr{I}_{m,n}(f\otimes g)}&=2^{-\frac{m+n}{2}}\prod_{k}  \overline{J_{m_k,n_k}(\sqrt2 Z(\mathfrak{e}_k))}\\
      &=2^{-\frac{m+n}{2}}\prod_{k}  J_{n_k,m_k}(\sqrt2 Z(\mathfrak{e}_k))\\
      &=\mathscr{I}_{n,m}(\bar{g}\otimes\bar{f}).
   \end{align*}
\end{proof}

\noindent{\it Proof of Theorem~\ref{pp2}.\,} Set $p=m+n$.
It follows from Proposition~\ref{rmw} (or see \cite[Theorem 2.7.7]{np}, \cite[Proposition~1.1.7]{Nua}) that the multiple integral $\mathcal{I}_p$ provides an isometry from $\FH\oplus\FH$ onto $\mathcal{H}_p(W)$, i.e.,
\begin{equation*}
   \mathcal{I}_p((\FH\oplus \FH)^{\odot p}):=\set{\mathcal{I}_p(u):\,u\in (\FH\oplus \FH)^{\odot p}}=\mathcal{H}_p(W).
\end{equation*}
Proposition~\ref{ppp1} implies that the complex multiple integral $\mathscr{I}_{m,n}$ provides an isometry from $\FH_{\Cnum}^{\odot m}\otimes\FH_{\Cnum}^{\odot n} $ onto $ \mathscr{H}_{m,n}(Z)$, i.e.,
\begin{equation*} 
   \mathscr{I}_{m,n}(\FH_{\Cnum}^{\odot m}\otimes\FH_{\Cnum}^{\odot n}):=\set{\mathscr{I}_{m,n}(\varphi):\,\varphi\in \FH_{\Cnum}^{\odot m}\otimes\FH_{\Cnum}^{\odot n}}=\mathscr{H}_{m,n}(Z).
\end{equation*}
Therefore, (\ref{eeq}) is deduced from Theorem~\ref{co2}.

Let $\psi$ be as in (\ref{psi}). When $m\neq n$, it follows from Theorem~\ref{chaos} and Lemma~\ref{lm1} that
\begin{align*}
   0&=\innp{\mathscr{I}_{m,n}(\varphi),\mathscr{I}_{n,m}(\psi)}_{L^2_{\Cnum}(\Omega)}\\
   &=\innp{F,\bar{F}}_{L^2(\Omega)}=E[(U+\mi V)^2]\\
   &=E[U^2]-E[V^2]+ 2\mi E[UV].
\end{align*}
Thus (\ref{inn}) is valid. {\hfill\large{$\Box$}}
\subsection{Proof of Theorem~\ref{th1}-\ref{th2}}

\begin{lem}\label{prod1}
Suppose that $u,\,v\in (\FH\oplus \FH)^{\odot q}$ and $U=\mathcal{I}_{q}(u),\, V=\mathcal{I}_{q}(v)$. Then we have
\begin{align*}
   E[U^2V^2]&=2(E[UV])^2+E[U^2]E[V^2]\\
   &+\sum\limits_{r=1}^{q-1}{q \choose r}^2\big[(q!)^2\norm{u\otimes_r v}^2+(r!)^2{q \choose r}^2(2q-2r)!\norm{u\tilde{\otimes}_r v}^2\big],
   \end{align*}
   where $u\otimes_r v$ is the $r$-th contraction of $u$ and $v$, and $u\tilde{\otimes}_r v$ is the symmetrization of $u\otimes_r v$.
\end{lem}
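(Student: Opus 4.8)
The plan is to reduce the identity to the multiplication formula for multiple Wiener--It\^o integrals of equal order, together with the isometry relations of Proposition~\ref{rmw}. Recall the product formula \cite{np}: for symmetric kernels $u,v\in(\FH\oplus\FH)^{\odot q}$,
\begin{equation*}
\mathcal{I}_q(u)\mathcal{I}_q(v)=\sum_{r=0}^q r!{q\choose r}^2\mathcal{I}_{2q-2r}(u\tilde{\otimes}_r v),
\end{equation*}
together with the orthogonality relation $E[\mathcal{I}_p(\phi)\mathcal{I}_{p'}(\psi)]=\delta_{p,p'}\,p!\innp{\phi,\psi}$ valid for symmetric $\phi,\psi$. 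Since $U$ and $V$ are real-valued, $U^2V^2=(UV)^2$, so the whole computation is driven by applying the product formula once to $UV$ and then taking the second moment.

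First I would expand $UV$ by the product formula and compute $E[(UV)^2]$. Because integrals of distinct orders are orthogonal, only the diagonal terms survive, giving
\begin{equation*}
E[U^2V^2]=\sum_{r=0}^q (r!)^2{q\choose r}^4(2q-2r)!\,\norm{u\tilde{\otimes}_r v}^2.
\end{equation*}
This already exhibits the two endpoint contributions: the term $r=q$ equals $(q!)^2\innp{u,v}^2=(E[UV])^2$, while the term $r=0$ equals $(2q)!\,\norm{u\tilde{\otimes}_0 v}^2$, the squared norm of the full symmetrization of $u\otimes v$.

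The decisive step is to re-express the $r=0$ term through the unsymmetrized contraction norms. I would invoke the identity
\begin{equation*}
{2q\choose q}\norm{u\tilde{\otimes}_0 v}^2=\sum_{r=0}^q {q\choose r}^2\norm{u\otimes_r v}^2,
\end{equation*}
which, together with $(2q)!=(q!)^2{2q\choose q}$, turns the $r=0$ term into $(q!)^2\sum_{r=0}^q{q\choose r}^2\norm{u\otimes_r v}^2$. In this last sum the index $r=0$ produces $(q!)^2\norm{u}^2\norm{v}^2=E[U^2]E[V^2]$ and the index $r=q$ produces a second copy of $(E[UV])^2$; adding the latter to the endpoint term already found yields $2(E[UV])^2$. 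The surviving indices $r=1,\dots,q-1$, collected from the two sources and after factoring out ${q\choose r}^2$, combine into exactly the bracketed expression of the statement, completing the proof.

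The main obstacle is establishing the symmetrization identity, where the combinatorics actually lives. I would prove it by writing $\norm{u\tilde{\otimes}_0 v}^2=\innp{u\tilde{\otimes}_0 v,\,u\otimes v}$, since symmetrization is an orthogonal projection, expanding $u\tilde{\otimes}_0 v=\frac{1}{(2q)!}\sum_{\sigma\in S_{2q}}\sigma(u\otimes v)$, and then grouping the permutations $\sigma$ by the integer $r=\abs{\sigma(\set{1,\dots,q})\cap\set{q+1,\dots,2q}}$. Using that $u$ and $v$ are each symmetric, every $\sigma$ in the class with parameter $r$ yields the same complete contraction, equal to $\norm{u\otimes_r v}^2$, and a direct count shows that this class contains exactly $(q!)^2{q\choose r}^2$ permutations; dividing by $(2q)!$ then gives the weight ${q\choose r}^2/{2q\choose q}$. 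Verifying that all permutations in a fixed class produce the same scalar, which rests on the symmetry of $u$ and $v$, is the delicate point; the remaining bookkeeping is routine.
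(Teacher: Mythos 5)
Your proposal is correct and follows essentially the same route as the paper's proof: apply the product formula to $UV$, use orthogonality and isometry to get $E[U^2V^2]=\sum_{r=0}^q (r!)^2{q\choose r}^4(2q-2r)!\norm{u\tilde{\otimes}_r v}^2$, identify the $r=q$ term as $(E[UV])^2$, and convert the $r=0$ term via the identity $(2q)!\norm{u\tilde{\otimes}_0 v}^2=(q!)^2\sum_{r=0}^q{q\choose r}^2\norm{u\otimes_r v}^2$. The only difference is that where the paper simply cites this last identity (as in (4.26) of Nourdin's paper, ``using some combinatorics''), you supply a correct self-contained proof of it by the projection argument and the permutation count $(q!)^2{q\choose r}^2$, which makes your write-up slightly more complete than the paper's.
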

The lemma is a minor extension of the case $u=v$ in \cite[ Lemma 4.1 ]{nour}, the reader can also refer to (3.5-6) of \cite{nr} for a similar case $u\in (\FH\oplus \FH)^{\odot q},\,v\in (\FH\oplus \FH)^{\odot p}$ where $p,\,q$ can be different. But in the present paper we just need $p=q$ and for a convenicence we show it shortly.
\begin{proof}
The product formula of real multiple Wiener-It\^{o} integral \cite{nour,np} implies that
\begin{align*}
      UV&=\mathcal{I}_{q}(u)\mathcal{I}_{q}(v)=\sum_{r=0}^q r!{q \choose r}^2 \mathcal{I}_{2q-2r}(u\tilde{\otimes}_r v).\label{uv}
\end{align*}
Using the orthogonality and isometry properties of the integrals $\mathcal{I}_{q}$, we have that
\begin{equation}
    E[U^2V^2]=\sum_{r=0}^q (r!)^2{q \choose r}^4(2q-2r)!\norm{u\tilde{\otimes}_r v}^2_{\FH^{\otimes (2q-2r)}}.\label{uv2}
\end{equation}
Clearly, when $r=q$ in (\ref{uv2}),
 \begin{align}\label{dd1}
   (q!)^2\norm{u\tilde{\otimes}_q v}^2= \big[q!{\innp{u,\, v}_{\FH^{\otimes q}}}\big]^2=(E[UV])^2.
\end{align}
Along the same line to get (4.26) of \cite{nour}, using some combinatorics, it is readily checked that when $r=0$ in (\ref{uv2}),
\begin{align}
   (2q)!\norm{u\tilde{\otimes} v}^2_{\FH^{\otimes 2q}}&=(q!)^2\big[(u\otimes_q v)^2+\norm{u\otimes v}^2+\sum_{r=1}^{q-1}{q \choose r}^2\norm{u\otimes_r v}^2_{\FH^{\otimes (2q-2r)}}\big]\nonumber\\
   &=(E[UV])^2+E[U^2]E[V^2]+(q!)^2\sum_{r=1}^{q-1}{q \choose r}^2\norm{u\otimes_r v}^2_{\FH^{\otimes (2q-2r)}}\label{dd2}.
\end{align}
Substituting  (\ref{dd1}) and (\ref{dd2}) into (\ref{uv2}) yields the desired result.
\end{proof}

\noindent{\it Proof of Theorem~\ref{th1}.\,}  Let the notation in Theorem~\ref{pp2} prevail. It follows directly from Theorem~\ref{pp2}
 that $F_k=U_k+\mi V_k=\mathcal{I}_{m+n}(u_k)+\mi\mathcal{I}_{m+n}(v_k)$. Since $U_k$'s and $V_k$'s admit moments of all order, $F_k$'s also admit moments of all order. The implication (i)$ \to$ (ii) is trivial, whereas the implication (ii)$ \to$ (i) follows directly from Theorem~\ref{pp2} and Theorem 6.2.3 of \cite{np}. Concretely, we divide the implication (ii)$ \to$ (i) into several cases.

If $m\neq n$, Eq.(\ref{inn}) implies that $E[U_k^2]=E[V_k^2]\to \frac12 \sigma^2$. If $E[|F_k|^4]\to 2\sigma^4$ as $k\to \infty$ then we have that
\begin{align*}
E[U_k^4]-\frac34 \sigma^4+ E[V_k^4]- \frac34 \sigma^4+2( E[U_k^2V_k^2]- \frac14 \sigma^4)=E[|F_k|^4]-2\sigma^4\to 0.
\end{align*}
Thus,
\begin{align*}
   E[U_k^4]-3 (E[U_k^2])^2+ E[V_k^4]- 3 (E[V_k^2])^2+2( E[U_k^2V_k^2]-E[U_k^2]E[V_k^2] )\to 0.
\end{align*}
But Lemma 4.1 in \cite{nour} implies the non-trivial fact (or see (5.2.8) in \cite[p96]{np}):
\begin{equation}\label{u4}
   E[U_k^4]\ge 3 (E[U_k^2])^2\text{\,\, and \,\,}E[V_k^4]\ge 3 (E[V_k^2])^2,
\end{equation} and Lemma~\ref{prod1} implies that
\begin{align}\label{u2v2}
   E[U_k^2V_k^2]-E[U_k^2]E[V_k^2]\ge 0.
\end{align}
Then we have
\begin{align*}
   E[U_k^4]-3 (E[U_k^2])^2\to 0,\quad  E[V_k^4]- 3 (E[V_k^2])^2\to 0.
\end{align*}
Thus
$E[U_k^4]\to \frac34 \sigma^4$ and $E[V_k^4]\to \frac34 \sigma^4$. It follows from the Nualart-Peccati criterion that both $U_k$ and $V_k$ converge in law to $N(0, \frac12 \sigma^2)$. (\ref{inn}) implies that $E[U_kV_k]=0 $. Thus the Peccati-Tudor criterion (see Theorem~ 6.2.3 of \cite{np}) implies that
$ F_k/\sigma$ converges in distribution to a standard complex normal law.

If $m=n$ and $a^2+b^2<1$, we can finish the proof by simply mimicking the arguments of the case $m\neq n$.

If $m=n$ and $a^2+b^2=1$, then there exists $\beta\in (-\pi,\,\pi]$ such that $a+\mi b=e^{\mi \beta}$. Let $\tilde{F}_k=e^{-\mi \beta/2}F_k$, then
$\tilde{F}_k=\tilde{U}_k+\mi \tilde{V}_k$ is a $(m,m)$-th complex multiple integral such that $E[\abs{\tilde{F}_k}^2]\to\sigma^2,\, E[\tilde{F}_k^2]\to \sigma^2$. Therefore, $E[\tilde{U}_k^2]\to \sigma^2,\,E[\tilde{V}_k^2]\to 0,\,E[\tilde{U}_k\tilde{V}_k]\to 0$. Similar to the case $m\neq n$, condition (ii) implies that $\tilde{U}_k$ converges in distribution to $\mathcal{N}(0,\,\sigma^2)$ and $\tilde{V}_k$ converges to $0$ in probability. By the Portmanteau theorem \cite[p16]{bll}, it is easy to show that $(\tilde{U}_k,\,\tilde{V}_k) $ converges in distribution to jointly normal law with covariance matrix $\diag\set{\sigma^2,\,0}$, which implies by orthogonal transformation that $( U_k,\,V_k)$ converges in distribution to jointly normal law with covariance matrix $\tensor{C}$. The equivalent between (ii) and (iii) is due to the well known fact that the norms $\norm{\cdot}_p$ of $\tilde{U}_k,\,\tilde{V}_k$ in $L^p(\Omega)\,(p>1)$ are equivalent to each other.
{\hfill\large{$\Box$}}
\begin{rem}
  \begin{itemize}
     \item[\textup{1)}] The crucial component of the proof is the inequalities (\ref{u4}) and (\ref{u2v2}).
     \item[\textup{2)}]  That the sequence $(U_k,\,V_k)$ converges in distribution to a jointly normal law is also equivalent to $u_k{\otimes}_r u_k\to 0,\, v_k{\otimes}_r v_k\to 0$ for $r=1,\dots,m+n-1$, i.e., $U_k,\,V_k$ converge in distribution to $\mathcal{N}(0,\frac{1+a}{2}\sigma^2)$ and $\mathcal{N}(0,\frac{1-a}{2}\sigma^2)$ respectively. Moreover, from the proof we have that $\mathrm{Cov}(U_k^2,V_k^2)= E[U_k^2V_k^2]-E[U_k^2]E[V_k^2]\to 0$.
    \item[\textup{3)}] There exists an alternative proof. In fact, Theorem~\ref{th1} can be implied directly from Theorem~4.2 of \cite{nr} and  Theorem~\ref{pp2}.
      \end{itemize}
\end{rem}
\noindent{\it Proof of Theorem~\ref{th2}.\,}  The proof is similar to that of Theorem~\ref{th1}. If $m\neq n$ then the condition
$E[F_k^3+3|F_k|^2\bar{F}_k]\to 8(1-\mi)\sigma^2$ implies that $E[\bar{F}_k^3+3|F_k|^2{F}_k]\to 8(1+\mi)\sigma^2$. Then we have
\begin{align*}
   E[U_k^3]&=\frac18 E (F_k+\bar{F}_k)^3\\
   &=\frac18 \{E[F_k^3+3|F_k|^2\bar{F}_k]+E[\bar{F}_k^3+3|F_k|^2{F}_k]\}\\
   &\to 2\sigma^2,
\end{align*}
and similarly $ E[V_k^3]=\frac{\mi}{8} \{E[F_k^3+3|F_k|^2\bar{F}_k]-E[\bar{F}_k^3+3|F_k|^2{F}_k]\to  2\sigma^2$. Then the condition $E[\abs{F_k}^4]\to 2\sigma^4+24\sigma^2$ yields that
\begin{align*}
   E[U_k^4]-12E[U_k^3] +E[V_k^4]-12E[V_k^3]+2E[U_k^2V_k^2] \to  2 \sigma^4-24\sigma^2.
\end{align*}
Since $E[U_k^2]=E[V_k^2]\to \frac12 \sigma^2$, we have that
\begin{align*}
    &E[U_k^4]-12E[U_k^3]+24E[U_k^2]-3(E[U_k^2])^2\\
    &+ E[V_k^4]-12E[V_k^3]+24E[V_k^2]-3(E[V_k^2])^2\\
    &+ 2(E[U_k^2V_k^2] -E[U_k^2]E[V_k^2])\\
    & \to 0.
\end{align*}
It follows from (3.5-3.7) in \cite{np1} that
\begin{align*}
   E[U_k^4]-12E[U_k^3]+24E[U_k^2]-3(E[U_k^2])^2\ge 0,\\
   E[V_k^4]-12E[V_k^3]+24E[V_k^2]-3(E[V_k^2])^2\ge 0.
\end{align*}
Together with (\ref{u2v2}), the above inequalities yield
\begin{align}
    E[U_k^4]-12E[U_k^3]+24E[U_k^2]-3(E[U_k^2])^2\to 0,\nonumber\\
    \mathrm{Cov}(U_k^2,\,V_k^2)=E[U_k^2V_k^2] -E[U_k^2]E[V_k^2]\to 0.\label{covv}
\end{align}
Then we have $ E[U_k^4]-12E[U_k^3] \to  \frac34 \sigma^4-12\sigma^2$ and similarly $E[V_k^4]-12E[V_k^3] \to  \frac34 \sigma^4-12\sigma^2$.
Together with (\ref{covv}), it follows from Theorem 4.5 of \cite{nr} that $(U_k,\,V_k)$ converges in distribution to independent random variable having identical centered $\chi^2 $ distribution with $ \frac{\sigma^2}{4}$ degree of freedom.

If $m=n$ and $a^2+b^2<1$, we can finish the proof by simply mimicking the arguments of the case $m\neq n$.

{\hfill\large{$\Box$}}

\section{Some generalized results }\label{sec30}
We generalize Theorem~\ref{th1}-\ref{th2} in this section.
From the proofs of Theorem~\ref{th1}-\ref{th2}, we find that the condition of the degree of the Wiener-It\^{o} chaos being fixed is not necessary. Thus we have the following version of the fourth moment theorem.

\begin{thm}
For a fixed $l\ge 2$, consider a sequence of random variable $F_k\in \bigoplus\limits_{m+n=l}\mathscr{H}_{m,n}(Z)$, and suppose that $E[\abs{F_k}^2]\to \sigma^2$ and  $E[F_k^2]\to \sigma^2(a+\mi b)$ where $a,b\in \Rnum$ such that $a^2+b^2\le 1$ as $k\to \infty$, then the following three assertions are equivalent:
  \begin{itemize}
    \item[\textup{(i)}] The sequence $(\RE F_k,\,\IM  F_k)$ converges in distribution to a centered jointly normal law with the covariance $\frac{\sigma^2}{2}\tensor{C}$, where the matrix $\tensor{C}$ is as in Theorem~\ref{th1},
    \item[\textup{(ii)}] $E[\abs{F_k}^4]\to(a^2+b^2+2)\sigma^4$,
    \item[\textup{(iii)}] $u_k{\otimes}_r u_k\to 0,\, v_k{\otimes}_r v_k\to 0$ for $r=1,\dots,l-1$.
 \end{itemize}
 where $F_k=U_k+\mi V_k=\mathcal{I}_{l}(u_k)+\mi\mathcal{I}_{l}(v_k)$ ( see Theorem~\ref{co2}) and  $u\otimes_r v$ is the $r$-th contraction of $u$ and $v$.
\end{thm}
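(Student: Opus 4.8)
The plan is to reduce the statement to the real multiple Wiener--It\^o integrals of the single order $l$ and then repeat, almost verbatim, the argument of Theorem~\ref{th1}. The key observation is that, by (\ref{eq3}) in Theorem~\ref{co2}, the space $\bigoplus_{m+n=l}\mathscr{H}_{m,n}(Z)$ coincides with the complexification $\mathcal{H}^{\Cnum}_l(W)=\mathcal{H}_l(W)+\mi\,\mathcal{H}_l(W)$. Hence, exactly as in Theorem~\ref{pp2}, every $F_k$ in this space can be written $F_k=U_k+\mi V_k$ with $U_k=\mathcal{I}_l(u_k)$ and $V_k=\mathcal{I}_l(v_k)$ for real $u_k,v_k\in(\FH\oplus\FH)^{\odot l}$. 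The only difference from the setting of Theorem~\ref{th1} is that here $U_k$ and $V_k$ need not be orthogonal, so the full matrix $\tensor{C}$ appears. First I would translate the hypotheses: from $\abs{F_k}^2=U_k^2+V_k^2$ and $F_k^2=U_k^2-V_k^2+2\mi\,U_kV_k$ the assumptions $E[\abs{F_k}^2]\to\sigma^2$ and $E[F_k^2]\to(a+\mi b)\sigma^2$ give
\begin{equation*}
E[U_k^2]\to\tfrac{1+a}{2}\sigma^2,\qquad E[V_k^2]\to\tfrac{1-a}{2}\sigma^2,\qquad E[U_kV_k]\to\tfrac{b}{2}\sigma^2,
\end{equation*}
so that the covariance matrix of $(U_k,V_k)$ converges to $\frac{\sigma^2}{2}\tensor{C}$.

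I would then prove the equivalences along the cycle (ii)$\Rightarrow$(iii)$\Rightarrow$(i)$\Rightarrow$(ii). For (ii)$\Rightarrow$(iii), set
\begin{align*}
A_k&=E[U_k^4]-3(E[U_k^2])^2,\qquad B_k=E[V_k^4]-3(E[V_k^2])^2,\\
D_k&=E[U_k^2V_k^2]-E[U_k^2]E[V_k^2]-2(E[U_kV_k])^2,
\end{align*}
which are all non-negative: $A_k,B_k\ge0$ by (\ref{u4}), and $D_k\ge0$ because it is precisely the residual sum over $r=1,\dots,l-1$ in Lemma~\ref{prod1}. Expanding $\abs{F_k}^4=U_k^4+2U_k^2V_k^2+V_k^4$ and inserting these quantities gives
\begin{equation*}
E[\abs{F_k}^4]=A_k+B_k+2D_k+3(E[U_k^2])^2+3(E[V_k^2])^2+2E[U_k^2]E[V_k^2]+4(E[U_kV_k])^2,
\end{equation*}
and a short computation with the limiting second moments shows the deterministic tail tends to $(a^2+b^2+2)\sigma^4$ (this number is nothing but the fourth moment of the complex Gaussian limit). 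Hence (ii) is equivalent to $A_k+B_k+2D_k\to0$; by non-negativity this forces $A_k\to0$ and $B_k\to0$, i.e.\ $E[U_k^4]\to3(\lim E[U_k^2])^2$ and likewise for $V_k$, which by the Nualart--Peccati criterion is equivalent to the contraction condition (iii).

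For (iii)$\Rightarrow$(i), the contraction condition says exactly that $U_k$ and $V_k$ converge in law to $\mathcal{N}(0,\tfrac{1+a}{2}\sigma^2)$ and $\mathcal{N}(0,\tfrac{1-a}{2}\sigma^2)$ respectively; since their covariances converge, the Peccati--Tudor theorem (Theorem~6.2.3 of \cite{np}) upgrades this componentwise convergence to the joint convergence of $(\RE F_k,\IM F_k)=(U_k,V_k)$ towards $\mathcal{N}(0,\frac{\sigma^2}{2}\tensor{C})$, which is (i). Finally, (i)$\Rightarrow$(ii) follows from convergence of moments: $U_k,V_k$ are multiple integrals of the fixed order $l$ bounded in $L^2$, so hypercontractivity makes the families $\{U_k^4\},\{V_k^4\},\{U_k^2V_k^2\}$ uniformly integrable, whence convergence in distribution yields convergence of $E[\abs{F_k}^4]$ to the Gaussian value $(a^2+b^2+2)\sigma^4$.

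The reduction itself is free once Theorem~\ref{co2} is granted, so the only genuinely delicate point is the degenerate case $a^2+b^2=1$, where $\tensor{C}$ is singular and Peccati--Tudor must deliver convergence to a Gaussian supported on a line. I expect this to be the main obstacle, and I would handle it exactly as in part~3) of Theorem~\ref{th1}: choosing $\beta$ with $a+\mi b=e^{\mi\beta}$ and rotating $\tilde F_k=e^{-\mi\beta/2}F_k$ so that $E[\tilde F_k^2]\to\sigma^2$, one component of $(\tilde U_k,\tilde V_k)$ then tends to $0$ in probability while the other converges to $\mathcal{N}(0,\sigma^2)$; the Portmanteau theorem (as in \cite{bll}) gives joint convergence to the degenerate limit, and rotating back recovers (i) for $(U_k,V_k)$.
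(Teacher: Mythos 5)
Your proposal is correct and takes essentially the same route as the paper, whose own proof of this statement is simply the observation that the argument of Theorem~\ref{th1} survives once Theorem~\ref{co2} gives $F_k=\mathcal{I}_l(u_k)+\mi\,\mathcal{I}_l(v_k)$: the same ingredients --- the nonnegative defects from (\ref{u4}) and Lemma~\ref{prod1}, the Nualart--Peccati contraction equivalence, Peccati--Tudor for joint convergence, and the rotation/Portmanteau treatment of the degenerate case $a^2+b^2=1$ --- appear in both. Your only refinement is making explicit the quantity $D_k=E[U_k^2V_k^2]-E[U_k^2]E[V_k^2]-2(E[U_kV_k])^2\ge 0$, which correctly sharpens (\ref{u2v2}) to cover $E[U_kV_k]\not\to 0$ (unavoidable here, since the orthogonality (\ref{inn}) is unavailable on the direct sum), a detail the paper leaves implicit in its ``mimic the case $m\neq n$'' step.
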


\begin{thm} Let $\xi(\alpha_1,\alpha_2)$ be as in Theorem~\ref{th2}. For a fixed even number $l\ge 2$, consider a sequence of random variable $F_k\in \bigoplus\limits_{m+n=l}\mathscr{H}_{m,n}(Z) $. Suppose that $E[\abs{F_k}^2]\to \sigma^2$ and $E[{F_k}^2]\to \sigma^2(a+\mi b)$ such that $a^2+b^2<1$ as $k\to \infty$. Then as $k\to \infty$, the following two assertions are equivalent:
  \begin{itemize}
 \item[\textup{(i)}] The sequence $(F_k)$ converges in distribution to $\xi(\frac{1+a}{2}\sigma^2,\,\frac{1-a}{2}\sigma^2) $;
    \item[\textup{(ii)}] $E[F_k^3+3|F_k|^2\bar{F}_k]\to 8[1+a-\mi(1-a)]\sigma^2$ and $E[\abs{F_k}^4]\to (2+a^2)\sigma^4+24\sigma^2$.
    \end{itemize}
\end{thm}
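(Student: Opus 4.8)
The plan is to reduce everything to the real Wiener--It\^o chaos and then run the argument used in the proof of Theorem~\ref{th2}. The decisive structural fact is Theorem~\ref{co2}: since $l$ is fixed and $\bigoplus_{m+n=l}\mathscr{H}_{m,n}(Z)=\mathcal{H}^{\Cnum}_l(W)=\mathcal{H}_l(W)+\mi\mathcal{H}_l(W)$ by (\ref{eq3}), any $F_k\in\bigoplus_{m+n=l}\mathscr{H}_{m,n}(Z)$ can be written as $F_k=U_k+\mi V_k$ with $U_k=\mathcal{I}_l(u_k)$ and $V_k=\mathcal{I}_l(v_k)$ for real kernels $u_k,v_k\in(\FH\oplus\FH)^{\odot l}$. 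Thus, although $F_k$ is spread over several chaoses of different bidegree $(m,n)$, its real and imaginary parts both lie in the \emph{single} real chaos $\mathcal{H}_l(W)$, which is exactly what makes the one-dimensional real fourth-moment machinery applicable. The only genuine change compared with Theorem~\ref{th2} is that the orthogonality relation (\ref{inn}) is no longer available, since it required a fixed bidegree with $m\neq n$; instead the needed second-order data are read off directly from the two hypotheses.

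First I would extract the limiting covariances. From $E[\abs{F_k}^2]=E[U_k^2]+E[V_k^2]\to\sigma^2$ and $E[F_k^2]=E[U_k^2]-E[V_k^2]+2\mi\,E[U_kV_k]\to\sigma^2(a+\mi b)$ one obtains
\begin{equation*}
E[U_k^2]\to\tfrac{1+a}{2}\sigma^2,\qquad E[V_k^2]\to\tfrac{1-a}{2}\sigma^2,\qquad E[U_kV_k]\to\tfrac{b}{2}\sigma^2.
\end{equation*}
The implication (i)$\Rightarrow$(ii) is then immediate: variables in a fixed sum of chaoses are hypercontractive, so distributional convergence upgrades to convergence of all moments, and (ii) merely records that the relevant third and fourth moments of $F_k$ tend to those of $\xi(\frac{1+a}{2}\sigma^2,\frac{1-a}{2}\sigma^2)$. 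For (ii)$\Rightarrow$(i) I would translate (ii) into real moments. A direct expansion gives $F_k^3+3\abs{F_k}^2\bar F_k=4U_k^3-4\mi V_k^3$, so the first half of (ii) is equivalent to $E[U_k^3]\to2(1+a)\sigma^2$ and $E[V_k^3]\to2(1-a)\sigma^2$, while the second half reads $E[U_k^4]+2E[U_k^2V_k^2]+E[V_k^4]\to(2+a^2)\sigma^4+24\sigma^2$.

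Next I would run the nonnegativity argument. Set $\Phi(X)=E[X^4]-12E[X^3]+24E[X^2]-3(E[X^2])^2$; by (3.5)--(3.7) of \cite{np1} one has $\Phi(U_k)\ge0$ and $\Phi(V_k)\ge0$, and by Lemma~\ref{prod1} one has $\mathrm{Cov}(U_k^2,V_k^2)=E[U_k^2V_k^2]-E[U_k^2]E[V_k^2]\ge0$. Feeding the limits above into the combination $\Phi(U_k)+\Phi(V_k)+2\,\mathrm{Cov}(U_k^2,V_k^2)$, a routine check shows that both the $\sigma^2$ and the $\sigma^4$ contributions cancel exactly, so this sum of three nonnegative quantities tends to $0$; hence each tends to $0$ separately. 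In particular $\Phi(U_k)\to0$ and $\Phi(V_k)\to0$, which by the real $\chi^2$ fourth-moment theorem force $U_k$ and $V_k$ to converge in law to the independent centered $\chi^2$ variables $G_1(\frac{1+a}{2}\sigma^2)$ and $G_2(\frac{1-a}{2}\sigma^2)$ of the definition of $\xi$, while simultaneously $\mathrm{Cov}(U_k^2,V_k^2)\to0$.

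The main obstacle is the last passage, from these two marginal $\chi^2$ limits to the \emph{joint} convergence of the pair $(U_k,V_k)$ toward a vector with independent components, which is what identifies the complex limit as $\xi(\frac{1+a}{2}\sigma^2,\frac{1-a}{2}\sigma^2)$. Marginal information alone is insufficient here: one must drive the off-diagonal contractions, not merely the individual fourth moments, to zero so that the independence in the limit genuinely holds. This is precisely the content of the multivariate $\chi^2$ criterion, Theorem~4.5 of \cite{nr}, for which the condition $\mathrm{Cov}(U_k^2,V_k^2)\to0$ verified above is the extra hypothesis needed. Combining it with the two marginal convergences yields $(U_k,V_k)\Rightarrow(G_1,G_2)$ and hence $F_k=U_k+\mi V_k\Rightarrow\xi(\frac{1+a}{2}\sigma^2,\frac{1-a}{2}\sigma^2)$; exactly as in Theorem~\ref{th2}, the general case $a^2+b^2<1$ is handled by mimicking this computation, the delicate point throughout being the control of the cross terms that secures the asymptotic independence.
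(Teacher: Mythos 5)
Your proposal is correct and takes essentially the same route as the paper: the paper proves this generalized statement by noting that the proof of Theorem~\ref{th2} (case $m=n$, $a^2+b^2<1$) never uses the fixed bidegree, only the representation $F_k=\mathcal{I}_l(u_k)+\mi\,\mathcal{I}_l(v_k)$ supplied by Theorem~\ref{co2}, the translation of condition (ii) into the real moments $E[U_k^3],E[V_k^3],E[\abs{F_k}^4]$, the nonnegativity of the two quantities from (3.5)--(3.7) of \cite{np1} together with $\mathrm{Cov}(U_k^2,V_k^2)\ge 0$ from Lemma~\ref{prod1}, and finally Theorem~4.5 of \cite{nr} to pass from marginal $\chi^2$ limits plus vanishing covariance of squares to joint convergence with independent components. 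Your two refinements --- replacing the unavailable orthogonality relation (\ref{inn}) by covariances read off from the hypotheses, and the clean identity $F_k^3+3\abs{F_k}^2\bar F_k=4U_k^3-4\mi V_k^3$ --- are exactly what the paper's ``mimic the case $m\neq n$'' instruction amounts to, so no substantive difference remains.
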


Moreover, we can also show the following multivariate version of Theorem~\ref{th1}-\ref{th2}.
\begin{thm}
Let $d\ge 2$, and let $l_1,\dots,l_d$ be positive integers such that $l_i\neq l_j$ for any $i\neq j$. Consider vectors $F_k=(F_{1,k},\dots,F_{d,k})$ with $F_{i,k}\in \bigoplus\limits_{m+n=l_i}\mathscr{H}_{m,n}(Z)$ . Assume that for $i=1,\dots, d$, as $k\to \infty$, $E[\abs{ F_{i,k}}^2]\to \sigma_i^2$
and $E[{ F_{i,k}}^2]\to \sigma_i^2(a_i+\mi b_i)$ such that $a_i^2+b_i^2\le 1$. As $k\to \infty$, the following two assertions are equivalent:
  \begin{itemize}
    \item[\textup{(i)}] The sequence $(F_k)$ converges in distribution to $\zeta=(\zeta_1,\dots,\zeta_d)$ such that all $\zeta_i$ are independent and $(\RE \zeta_i,\,\IM\zeta_i)$ being centered jointly normal with covariance matrix $\tensor{C}_i=\frac{\sigma_i^2}{2}\begin{bmatrix} 1+a_i & b_i\\ b_i & 1-a_i  \end{bmatrix}$;
    \item[\textup{(ii)}] The sequence $(F_{i,k})$ converges in distribution to $\zeta_i$ such that $(\RE \zeta_i,\,\IM\zeta_i)$ being centered jointly normal with covariance matrix  $\tensor{C}_i$ for $i=1,\dots, d$.
  \end{itemize}
\end{thm}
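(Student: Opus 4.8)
The implication (i) $\Rightarrow$ (ii) is immediate, since the coordinates of a jointly convergent sequence converge marginally. The plan for (ii) $\Rightarrow$ (i) is to transfer the whole problem to \emph{real} multiple integrals and then to apply the multivariate Peccati--Tudor criterion (Theorem~6.2.3 of \cite{np}, or \cite{pt}). First I would invoke Theorem~\ref{pp2} on each coordinate to write $F_{i,k}=U_{i,k}+\mi V_{i,k}$ with $U_{i,k}=\mathcal{I}_{l_i}(u_{i,k})$ and $V_{i,k}=\mathcal{I}_{l_i}(v_{i,k})$, where $u_{i,k},v_{i,k}\in(\FH\oplus\FH)^{\odot l_i}$ and $\mathcal{I}_{l_i}$ denotes the real multiple Wiener--It\^{o} integral of order $l_i$ with respect to $W$. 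Thereby $U_{i,k},V_{i,k}\in\mathcal{H}_{l_i}(W)$, and the task becomes the joint convergence of the $2d$-dimensional real vector $G_k=(U_{1,k},V_{1,k},\dots,U_{d,k},V_{d,k})$.

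The key observation concerns the covariance matrix of $G_k$. Its diagonal $2\times2$ blocks converge to $\tensor{C}_i$: solving $E[\abs{F_{i,k}}^2]=E[U_{i,k}^2]+E[V_{i,k}^2]\to\sigma_i^2$ together with $E[F_{i,k}^2]=E[U_{i,k}^2]-E[V_{i,k}^2]+2\mi E[U_{i,k}V_{i,k}]\to\sigma_i^2(a_i+\mi b_i)$ yields $E[U_{i,k}^2]\to\frac{\sigma_i^2}{2}(1+a_i)$, $E[V_{i,k}^2]\to\frac{\sigma_i^2}{2}(1-a_i)$ and $E[U_{i,k}V_{i,k}]\to\frac{\sigma_i^2}{2}b_i$, exactly the entries of $\tensor{C}_i$. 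The off-diagonal blocks, coupling $i\neq j$, vanish \emph{identically} for every $k$: because $U_{i,k},V_{i,k}\in\mathcal{H}_{l_i}(W)$ and $U_{j,k},V_{j,k}\in\mathcal{H}_{l_j}(W)$ with $l_i\neq l_j$, these random variables lie in mutually orthogonal Wiener--It\^{o} chaoses and hence have zero cross-covariance. Thus the covariance matrix of $G_k$ converges to the block-diagonal matrix $\diag(\tensor{C}_1,\dots,\tensor{C}_d)$; this is precisely the step where the hypothesis $l_i\neq l_j$ is indispensable. Moreover, hypothesis (ii) asserts $F_{i,k}\to\zeta_i$, so $(U_{i,k},V_{i,k})=(\RE F_{i,k},\IM F_{i,k})\to(\RE\zeta_i,\IM\zeta_i)$, and in particular each of the $2d$ real components of $G_k$ converges marginally to a one-dimensional Gaussian.

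Having both a convergent covariance matrix and the marginal Gaussian convergence of every component (each being a real multiple integral of fixed order), I would apply the Peccati--Tudor criterion to conclude that $G_k$ converges in distribution to the centered Gaussian vector with covariance $\diag(\tensor{C}_1,\dots,\tensor{C}_d)$. Since this limiting covariance is block-diagonal and the limit is Gaussian, the blocks $(\RE\zeta_i,\IM\zeta_i)$ are mutually independent, and reassembling $\zeta_i=\RE\zeta_i+\mi\IM\zeta_i$ delivers assertion (i). The main obstacle is the boundary case $a_i^2+b_i^2=1$, in which $\tensor{C}_i$ is singular and the corresponding limiting block is degenerate; here I would either appeal to the version of the Peccati--Tudor theorem admitting a nonnegative-definite (possibly singular) limiting covariance, or equivalently rotate each such block by a unit complex factor as in case 3) of Theorem~\ref{th1}, reducing it to a genuine Gaussian component together with a component tending to $0$ in probability. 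Because such a rotation acts within a single block $i$ and never mixes distinct indices, the block-diagonal structure---and hence the independence of the $\zeta_i$---is preserved.
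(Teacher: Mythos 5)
Your proof is correct and follows essentially the approach the paper intends for this theorem (which it states without an explicit proof, referring back to the techniques of Theorems~\ref{th1}--\ref{th2}): decompose each coordinate into real multiple integrals of order $l_i$ with respect to $W$, use orthogonality of Wiener--It\^{o} chaoses of distinct orders to make the cross-block covariances vanish identically, invoke the Peccati--Tudor criterion for the $2d$-dimensional real vector, and treat singular blocks $\tensor{C}_i$ by the per-block rotation trick of case 3) of Theorem~\ref{th1}, which preserves the block-diagonal structure. One minor citation point: since $F_{i,k}$ lies in the direct sum $\bigoplus_{m+n=l_i}\mathscr{H}_{m,n}(Z)$ rather than in a single $(m,n)$-chaos, the decomposition $F_{i,k}=\mathcal{I}_{l_i}(u_{i,k})+\mi\,\mathcal{I}_{l_i}(v_{i,k})$ should be justified by Theorem~\ref{co2} (equation (\ref{eq3})), as the paper itself does in its first generalized theorem, rather than by Theorem~\ref{pp2}, which is stated for a fixed $(m,n)$.
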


\begin{thm}
   Let $d\ge 2$, and let $l_1,\dots,l_d$ be positive integers such that $l_i\neq l_j$ for any $i\neq j$. Consider vectors $F_k=(F_{1,k},\dots,F_{d,k})$ with $F_{i,k}\in \bigoplus\limits_{m+n=l_i}\mathscr{H}_{m,n}(Z)$ . Assume that for $i=1,\dots, d$, as $k\to \infty$,
  \begin{itemize}
    \item[\textup{(i)}]  $E[\abs{ F_{i,k}}^2]\to \sigma_i^2$ and $E[{ F_{i,k}}^2]\to \sigma_i^2(a_i+\mi b_i)$ such that $a_i^2+b_i^2 < 1$,
    \item[\textup{(ii)}] $E[F_{j,k}^2F_{i,k}]\to 0$ and  $E[\abs{F_{j,k}}^2F_{i,k}]\to 0$ whenever $l_i=2l_j$,
     \item[\textup{(iii)}] $E[F_{i,k}^3+3|F_{i,k}|^2\bar{F}_{i,k}]\to 8[1+a_i-\mi(1-a_i)]\sigma_i^2$ and $E[\abs{F_{i,k}}^4]\to (2+a_i^2)\sigma_i^4+24\sigma_i^2$.
  \end{itemize}
  Then the sequence
   $$(F_{1,k},\dots,F_{d,k}) \stackrel{\rm law}{ \longrightarrow}\big(\xi_1(\frac{1+a_1}{2}\sigma_1^2,\,\frac{1-a_1}{2}\sigma_1^2),\dots,\xi_d(\frac{1+a_d}{2}\sigma_d^2,\,\frac{1-a_d}{2}\sigma_d^2)\big).$$
    where all $\xi_i$ are independent and each $\xi_i$ is a complex centered $\chi^2$ distribution with $(\frac{1+a_i}{2}\sigma_i^2,\,\frac{1-a_i}{2}\sigma_i^2)$ degrees of freedom.
\end{thm}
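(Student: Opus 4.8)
The plan is to transfer the problem to the real Wiener--It\^{o} chaoses of $W$ and then apply the real multivariate $\chi^2$ criterion of Nourdin--Rosi\'nski \cite{nr}. First I would use Theorem~\ref{pp2} coordinatewise: each $F_{i,k}\in\bigoplus_{m+n=l_i}\mathscr{H}_{m,n}(Z)$ splits as
\[
F_{i,k}=U_{i,k}+\mi V_{i,k}=\mathcal{I}_{l_i}(u_{i,k})+\mi\,\mathcal{I}_{l_i}(v_{i,k}),
\]
with real kernels $u_{i,k},v_{i,k}\in(\FH\oplus\FH)^{\odot l_i}$. Joint convergence of the complex vector $(F_{1,k},\dots,F_{d,k})$ is then equivalent to joint convergence of the $2d$-dimensional real vector $\mathbf{G}_k=(U_{1,k},V_{1,k},\dots,U_{d,k},V_{d,k})$ of real multiple integrals with respect to $W$, and the target becomes a vector of independent centered $\chi^2$ variables.

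The marginal convergence $F_{i,k}\to\xi_i(\tfrac{1+a_i}{2}\sigma_i^2,\tfrac{1-a_i}{2}\sigma_i^2)$ for each fixed $i$ is already supplied by Theorem~\ref{th2} (the case $m=n$, $a_i^2+b_i^2<1$), because condition~(iii) is precisely its third/fourth-moment hypothesis. Tracing through that proof, the nonnegativity inequalities (3.5--3.7) of \cite{np1} for the $\chi^2$ fourth cumulant, together with the covariance inequality (\ref{u2v2}) coming from Lemma~\ref{prod1}, force each of $U_{i,k}$ and $V_{i,k}$ to satisfy its scalar $\chi^2$ fourth-moment criterion and force $\mathrm{Cov}(U_{i,k}^2,V_{i,k}^2)\to 0$; hence the real pair $(U_{i,k},V_{i,k})$ converges to the correct bivariate $\chi^2$ law. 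What remains is to promote these $d$ marginal limits to a joint limit with the blocks mutually independent.

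To do so I would feed $\mathbf{G}_k$ into the real multivariate $\chi^2$ theorem (Theorem~4.5 of \cite{nr}). Its hypotheses ask for marginal convergence of each component, already established, plus the vanishing of the relevant joint contractions across distinct blocks. Here the orders are distinct, $l_i\neq l_j$, so $U_{i,k},V_{i,k}$ and $U_{j,k},V_{j,k}$ live in orthogonal chaoses of $W$ and all cross-covariances vanish identically; consequently the only contractions that can survive asymptotically, and thereby couple two blocks, are those occurring in the resonant case $l_i=2l_j$, where the order-$2l_j$ chaos meets the chaos generated by the square of an order-$l_j$ integral. These are governed precisely by condition~(ii): expanding $E[F_{j,k}^2F_{i,k}]\to 0$ and $E[|F_{j,k}|^2F_{i,k}]\to 0$ into real and imaginary parts produces four real third-order cross moments of $U_{j,k},V_{j,k}$ against $U_{i,k},V_{i,k}$, which are exactly the quantities that the criterion of \cite{nr} requires to vanish in order to decouple an order-$2l_j$ block from an order-$l_j$ block.

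The hard part will be the resonance bookkeeping of the $l_i=2l_j$ step. I must verify that the two complex conditions in~(ii) are equivalent to the vanishing of all the real joint contractions (equivalently, the four real cross third moments obtained from the expansions of $F_{j,k}^2F_{i,k}$ and $|F_{j,k}|^2F_{i,k}$) that appear in the Nourdin--Rosi\'nski criterion. This amounts to checking that the resulting $4\times 4$ linear system relating the complex and the real cross moments is nondegenerate, so that (ii) exactly closes the hypotheses of \cite{nr}. Once this is confirmed, the joint convergence of $\mathbf{G}_k$ to a vector of independent real $\chi^2$ blocks follows, and reassembling $U_{i,k}+\mi V_{i,k}$ yields convergence of $(F_{1,k},\dots,F_{d,k})$ to $(\xi_1,\dots,\xi_d)$ with all $\xi_i$ independent, as claimed.
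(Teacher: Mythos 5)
Your overall route is the one the paper intends (the paper states this theorem in Section~\ref{sec30} without writing out a proof): split each coordinate as $F_{i,k}=U_{i,k}+\mi V_{i,k}$ with $U_{i,k},V_{i,k}\in\mathcal{H}_{l_i}(W)$ via Theorem~\ref{co2}/Theorem~\ref{pp2}, get the marginal limits from the generalized Theorem~\ref{th2}, and feed the $2d$-dimensional real vector into the Gamma criterion of \cite{nr}. Your observation that only the resonant pairs $l_i=2l_j$ need a cross-block hypothesis is also correct: for non-resonant distinct orders the cross-contractions are killed by the marginal Gamma conditions and Cauchy--Schwarz, and the within-block pairs are handled by $\mathrm{Cov}(U_{i,k}^2,V_{i,k}^2)\to 0$ as in the scalar proof.

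The step you yourself flag as ``the hard part'', however, fails as you describe it: there is no nondegenerate $4\times 4$ system. Writing $F_{j,k}=U_j+\mi V_j$ and $F_{i,k}=U_i+\mi V_i$ (suppressing $k$), the expansion of condition (ii) reads
\begin{align*}
E[|F_j|^2F_i] &= E[(U_j^2+V_j^2)U_i]+\mi\, E[(U_j^2+V_j^2)V_i],\\
E[F_j^2F_i] &= \big(E[(U_j^2-V_j^2)U_i]-2E[U_jV_jV_i]\big)+\mi\,\big(E[(U_j^2-V_j^2)V_i]+2E[U_jV_jU_i]\big),
\end{align*}
so (ii) gives only \emph{four} real linear conditions on the \emph{six} distinct cross moments $E[U_j^2U_i]$, $E[V_j^2U_i]$, $E[U_j^2V_i]$, $E[V_j^2V_i]$, $E[U_jV_jU_i]$, $E[U_jV_jV_i]$, whereas the pairwise resonance hypotheses of Theorem~4.5 of \cite{nr} require the first four of these to vanish individually. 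Linear algebra alone cannot separate them: for instance $E[U_j^2U_i]\to c$, $E[V_j^2U_i]\to -c$, $E[U_jV_jV_i]\to c$ is compatible with both conditions in (ii) for any $c\neq 0$. What is missing is an argument exploiting the marginal Gamma structure: under (iii), the kernel $u_{i,k}$ of $U_{i,k}$ satisfies the fixed-point relation $\|u_{i,k}\tilde{\otimes}_{l_j}u_{i,k}-c\,u_{i,k}\|\to 0$ with $c\neq 0$, which (up to symmetrization terms that must be controlled, as in the proof of \cite{nr}) represents $\lim E[U_iU_j^2]$ and $\lim E[U_iV_j^2]$ as limits of quantities of the common sign of $c^{-1}\|u_{i,k}\otimes_{l_j}\cdot\|^2$; only this sign constraint, combined with the vanishing of the sum $E[(U_j^2+V_j^2)U_i]$ supplied by (ii), forces each term to vanish. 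Without such an argument (or an additional hypothesis such as $E[F_{j,k}^2\bar{F}_{i,k}]\to 0$, which together with (ii) does isolate the four moments), your claimed verification that ``(ii) exactly closes the hypotheses of \cite{nr}'' does not go through, and the mixed moments $E[U_jV_jU_i]$, $E[U_jV_jV_i]$ remain uncontrolled.
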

\section{Appendix: multiple integrals by means of the divergence operator}
To be self-contained and for the reader's convenience, we present an alternative way to define the complex multiple
Wiener-It\^{o} integrals by means of the divergence operator along the routine of the real multiple Wiener-It\^{o} integrals \cite{np}. 
All the proofs are omitted.

Let $\mathcal{S}$ denote the set of all random variables of the form
\begin{equation}\label{sfuncc}
   F=f(Z(\varphi_1),\dots,Z(\varphi_m)),
\end{equation}
where $m\in \Nnum, \varphi_1,\dots, \varphi_m\in \mathfrak{H}_{\Cnum},\,f\in C^{\infty}(\Cnum ^m)$.
Here we assume that $f$ with its partial derivatives has polynomial growth. A random variable belonging to $\mathcal{S}$ is said to be {\it smooth}.
Clearly, the space $\mathcal{S}$ is dense in $L^q(\Omega)$ for every $q\in [1,\infty)$.
For $p\in \Nnum$ and $j_1,\dots,j_p=1,\dots,m$, denote
\begin{equation*}
   \partial_{j_1}\cdots\partial_{j_p} f= \frac{\partial^p f(z_1,\dots,z_m)}{\partial z_{j_1}\dots\partial z_{j_p}} ,\quad  \bar{\partial}_{j_1}\cdots\bar{\partial}_{j_p} f=\frac{\partial^p f(z_1,\dots,z_m)}{\partial \bar{z}_{j_1}\dots\partial \bar{z}_{j_p}} .
\end{equation*}
\begin{dfn}
   Let $F\in \mathcal{S}$ be given by (\ref{sfuncc}). The $p$-th Malliavin derivative of $F$ (with respect to $Z$) is the element of $L^2(\Omega,\mathfrak{H}_{\Cnum}^{\odot p})$ defined by
   \begin{align*}
      D^p F&=\sum_{j_1,\dots,j_p=1}^m \partial_{j_1}\cdots\partial_{j_p}  f(Z(\varphi_1),\dots,Z(\varphi_m)) \varphi_{j_1}\otimes \dots\otimes \varphi_{j_p}, \\
      \bar{D}^p F&=\sum_{j_1,\dots,j_p=1}^m \bar{\partial}_{j_1}\cdots\bar{\partial}_{j_p} f(Z(\varphi_1),\dots,Z(\varphi_m)) \bar{\varphi}_{j_1}\otimes \dots\otimes \bar{\varphi}_{j_p}.
   \end{align*}
   It is routine to show that the two operators $D^p,\,\bar{D}^p$ are closable and can be consistently extended to the set $\DR^{p,q}$ and $\bar{\DR}^{p,q}$ which are the closure of $\mathcal{S}$ with respect to the Soblev norm \cite{np}. The adjoint operators of $D^p,\,\bar{D}^p$ are written $\delta^p,\,\bar{\delta}^p$ and called the multiple divergence operators of order $p$.
\end{dfn}
\begin{rem}
By following the same route of \cite{np}, we could define the Malliavin derivatives in Hilbert space and the Hilbert space valued divergences.
\end{rem}
\begin{dfn}
   Let $m,n\ge0$ and $f\in \FH_{\Cnum}^{\odot m}\otimes\FH_{\Cnum}^{\odot n}$. The $(m,n)$-th multiple integral of $f$ (with respect to $Z$) is defined by $\mathfrak{I}_{m,n}(f)=\delta^m\bar{\delta}^n(f)$.
\end{dfn}
\begin{prop}
  Let $m,n\ge0$ and $f\in \FH_{\Cnum}^{\odot m}\otimes\FH_{\Cnum}^{\odot n}$. For all $q\in [1,\infty)$,  $\mathfrak{I}_{m,n}(f)\in \DR^{\infty,q}\bigcap \bar{\DR}^{\infty,q}$. Moreover, for all $a,b\ge 0$,
  \begin{equation*}
     D^a\bar{D}^b\mathfrak{I}_{m,n}(f)=\left\{
      \begin{array}{ll}
      \frac{m!n!}{(m-a)!(n-b)!}\mathfrak{I}_{m-a,n-b}(f), \quad &\text{if\quad} a\le m,\,b\le n\\
      0,\quad &\text{otherwise.}
      \end{array}
      \right.
  \end{equation*}
\end{prop}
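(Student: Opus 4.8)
The plan is to treat $\mathfrak{I}_{m,n}(f)=\delta^m\bar\delta^n f$ by exploiting that $\delta,\bar\delta$ act as the infinite-dimensional holomorphic and anti-holomorphic creation operators, while $D,\bar D$ are the matching annihilation operators, so that the entire computation is governed by canonical commutation relations. First I would reduce, by linearity and a density argument, to elementary symmetric tensors of the form $f=\mathrm{symm}(\otimes_k\mathfrak{e}_k^{\otimes m_k})\otimes\mathrm{symm}(\otimes_k\bar{\mathfrak{e}}_k^{\otimes n_k})$ built from an orthonormal system $\set{\mathfrak{e}_k}$ of $\FH_{\Cnum}$. On such tensors $\delta^m\bar\delta^n f$ is a product of complex Hermite polynomials and, comparing with Definition~\ref{jmn} and Proposition~\ref{ppp1}, one identifies $\mathfrak{I}_{m,n}(f)=\mathscr{I}_{m,n}(f)\in\mathscr{H}_{m,n}(Z)$. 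This identification is convenient because it places $\mathfrak{I}_{m,n}(f)$ in a single fixed $(m,n)$-th chaos from the outset.

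The heart of the argument is a set of commutation relations between the derivation and divergence operators. I would establish, on smooth elements, the holomorphic Heisenberg relation $D\delta=\delta D+\mathrm{Id}$ and its anti-holomorphic counterpart $\bar D\bar\delta=\bar\delta\bar D+\mathrm{Id}$, together with the cross-commutations $D\bar\delta=\bar\delta D$, $\bar D\delta=\delta\bar D$, $\delta\bar\delta=\bar\delta\delta$ and $D\bar D=\bar D D$; these encode the mutual independence of the holomorphic slots (governed by $\partial/\partial z$) and the anti-holomorphic slots. Since $f$ is deterministic, $Df=\bar D f=0$, so applying $D$ to $\mathfrak{I}_{m,n}(f)=\delta^m\bar\delta^n f$, pushing $D$ through $\bar\delta^n$ (which commutes) and then through the $m$ copies of $\delta$ (each crossing producing one $\mathrm{Id}$), yields $D\mathfrak{I}_{m,n}(f)=m\,\mathfrak{I}_{m-1,n}(f)$, where $f$ is regarded, through the canonical embedding $\FH_{\Cnum}^{\odot m}\otimes\FH_{\Cnum}^{\odot n}\hookrightarrow\FH_{\Cnum}\otimes(\FH_{\Cnum}^{\odot(m-1)}\otimes\FH_{\Cnum}^{\odot n})$, as an $\FH_{\Cnum}$-valued kernel. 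Symmetrically $\bar D\mathfrak{I}_{m,n}(f)=n\,\mathfrak{I}_{m,n-1}(f)$.

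Iterating these two single-step identities and using that $D$ and $\bar D$ commute, I would obtain $D^a\bar D^b\mathfrak{I}_{m,n}(f)=\frac{m!}{(m-a)!}\frac{n!}{(n-b)!}\,\mathfrak{I}_{m-a,n-b}(f)$ whenever $a\le m$ and $b\le n$. If $a>m$ (respectively $b>n$) the iteration lowers the holomorphic (respectively anti-holomorphic) index to $0$ and the next derivative then hits a deterministic element, giving $0$, which is exactly the second branch of the stated formula. Finally, the Sobolev membership $\mathfrak{I}_{m,n}(f)\in\DR^{\infty,q}\cap\bar\DR^{\infty,q}$ is immediate once all iterated derivatives are identified as multiple integrals: each $D^a\bar D^b\mathfrak{I}_{m,n}(f)$ lives in the fixed chaos $\mathscr{H}_{m-a,n-b}(Z)$, on which all $L^q$-norms are equivalent by hypercontractivity, so every Sobolev norm is finite.

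The main obstacle I expect is the tensor bookkeeping in the single-step formulas: one must verify that the holomorphic derivative $D$ lowers only the $m$-index with the precise constant $m$ (and $\bar D$ only the $n$-index with constant $n$), which reduces to checking the cross-commutation $D\bar\delta=\bar\delta D$ rigorously while tracking the symmetrization and the identification of slots under the embedding above. Establishing the Heisenberg and cross-commutation relations themselves in the Hilbert-space-valued, holomorphic/anti-holomorphic split setting, i.e.\ extending $\delta,\bar\delta$ correctly as adjoints and confirming that the duality $E[\innp{DF,u}]=E[F\,\overline{\delta(u)}]$ behaves as in the real theory, is the technical core from which the rest follows routinely.
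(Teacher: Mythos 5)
Your proposal is correct, and it is essentially the argument the paper intends: the appendix explicitly omits all proofs, deferring to ``the routine of the real multiple Wiener-It\^{o} integrals'' of \cite{np}, and that routine is precisely your Heisenberg-commutation induction ($D\delta=\delta D+\mathrm{Id}$ applied through $\delta^m\bar{\delta}^n$, with $Df=\bar{D}f=0$ for deterministic $f$, then closability/density and hypercontractivity on a fixed chaos for the $\DR^{\infty,q}\bigcap\bar{\DR}^{\infty,q}$ membership), here supplemented by the complex cross-relations. The cross-commutations you flag as the technical core are indeed correct and are the genuinely complex-specific point: they hold because the adjoint $\delta$ of the holomorphic derivative $D$ corrects with the \emph{anti-holomorphic} derivative — the infinite-dimensional analogue of the paper's $(\partial^*\phi)(z)=-\frac{\partial}{\partial\bar{z}}\phi(z)+\frac{z}{\rho}\phi(z)$ in (\ref{itldefn}) — so that $[D,\delta]=\mathrm{Id}$, $[\bar{D},\bar{\delta}]=\mathrm{Id}$, $[D,\bar{\delta}]=[\bar{D},\delta]=0$, which is exactly what produces the factor $\frac{m!n!}{(m-a)!(n-b)!}$ and the vanishing when $a>m$ or $b>n$.
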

\begin{prop}{\bf (Isometry property of integrals)}
   Fix integrals $m_i,n_i,\ge 0$ with $i=1,2$, as well as $f\in \FH_{\Cnum}^{\odot m_1}\otimes\FH_{\Cnum}^{\odot n_1},\,g\in \FH_{\Cnum}^{\odot m_2}\otimes\FH_{\Cnum}^{\odot n_2}$. We have
   \begin{equation*}
      E[\mathfrak{I}_{m_1,n_1}(f)\overline{\mathfrak{I}_{m_2,n_2}(g)}]=\left\{
      \begin{array}{ll}
      {m_1!n_1!}\innp{f,\,g}_{\FH^{\otimes(m_1+n_1)}}, \quad &\text{if\quad} m_1=m_2,\,n_1=n_2\\
      0,\quad &\text{otherwise.}
      \end{array}
      \right.
   \end{equation*}
\end{prop}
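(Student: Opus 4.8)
The plan is to reproduce, in the complex setting, the standard integration-by-parts derivation of the isometry for real multiple integrals, using the two ingredients already available in the appendix: the duality $\delta=D^{*}$ and $\bar{\delta}=\bar{D}^{*}$ (with respect to the Hermitian inner product $\innp{A,B}_{L^2_{\Cnum}(\Omega)}=E[A\bar{B}]$), together with the derivative formula for $D^{a}\bar{D}^{b}\mathfrak{I}_{m,n}$ stated in the preceding proposition. Throughout we may restrict to a smooth $G$ and extend afterwards, since $\mathfrak{I}_{m_2,n_2}(g)\in\DR^{\infty,q}\cap\bar{\DR}^{\infty,q}$.

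First I would peel the divergences off the first factor. Writing $G=\mathfrak{I}_{m_2,n_2}(g)$ and $\mathfrak{I}_{m_1,n_1}(f)=\delta^{m_1}\bar{\delta}^{n_1}(f)$, I apply the adjoint relation $E[\delta(u)\bar{G}]=E[\innp{u,DG}_{\FH_\Cnum}]$ and its $\bar{\delta}$-analogue a total of $m_1+n_1$ times, in their Hilbert-space-valued forms (so that the intermediate $\FH_\Cnum^{\otimes k}$-valued functionals are differentiated slotwise). Since peeling a $\delta$ produces a holomorphic derivative $D$, peeling a $\bar{\delta}$ produces an antiholomorphic derivative $\bar{D}$, and the conjugated factor $\bar{G}$ is itself never differentiated, this yields
$$ E[\mathfrak{I}_{m_1,n_1}(f)\overline{\mathfrak{I}_{m_2,n_2}(g)}] = E\big[\,\innp{f,\,D^{m_1}\bar{D}^{n_1}\mathfrak{I}_{m_2,n_2}(g)}_{\FH_\Cnum^{\otimes(m_1+n_1)}}\big]. $$

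Next I would insert the derivative formula and split into cases. If $m_1>m_2$ or $n_1>n_2$ the derivative vanishes identically, hence so does the expectation. If $m_1\le m_2$ and $n_1\le n_2$, the formula gives a constant multiple of $\mathfrak{I}_{m_2-m_1,\,n_2-n_1}(g)$; when $(m_1,n_1)\neq(m_2,n_2)$ this is a genuine integral of order $\ge 1$, which is centered (because $E[\delta(\cdot)\overline{1}]=E[\innp{\cdot,D1}]=0$ as $D1=0$), so pulling the deterministic $f$ out of the expectation gives $\innp{f,E[\cdots]}=0$. Finally, when $m_1=m_2$ and $n_1=n_2$ the coefficient is $m_2!\,n_2!$ and $\mathfrak{I}_{0,0}(g)=g$, so the right-hand side collapses to $m_1!\,n_1!\,\innp{f,g}_{\FH_\Cnum^{\otimes(m_1+n_1)}}$, which is exactly the claimed value.

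I expect the main obstacle to be the bookkeeping in the iterated, Hilbert-space-valued duality: one must check that the conjugation in the Hermitian pairing sends each $\delta$ precisely to a $D$ (and not to a $\bar{D}$) and each $\bar{\delta}$ to a $\bar{D}$, that $\delta$ and $\bar{\delta}$ (equivalently $D$ and $\bar{D}$) commute on the relevant symmetric tensors, and that the contraction slots are matched so that the terminal pairing is the full inner product on $\FH_\Cnum^{\otimes(m_1+n_1)}$ rather than a partial contraction. Once the duality formula in the displayed equation is established, the remaining case analysis is routine.
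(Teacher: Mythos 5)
Your proof is correct and is precisely the argument the paper has in mind: the appendix explicitly omits all proofs because they follow ``the routine of the real multiple Wiener--It\^{o} integrals'' in the Nourdin--Peccati monograph, and your iterated duality peeling (using $\delta=D^{*}$, $\bar{\delta}=\bar{D}^{*}$ in Hilbert-space-valued form) combined with the stated formula for $D^{a}\bar{D}^{b}\mathfrak{I}_{m,n}$ and the centredness of positive-order integrals is exactly that routine transplanted to the complex setting. The bookkeeping issues you flag (slotwise duality, commutation of $D$ and $\bar{D}$, matching of contraction slots) are genuine but standard, and they go through exactly as in the real case.
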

\begin{thm}
  Let $h\in \FH_{\Cnum}$ be such that $\norm{h}_{\FH_{\Cnum}}=\sqrt2$. Then, for any $m,n\ge 0$, we have
\begin{equation}
\mathfrak{I}_{m,n}(h^{\otimes m}\otimes \bar{h}^{\otimes n})=J_{m,n}(Z(h)).
\end{equation}
Moreover, let $\mathbf{m},\,\mathbf{n},\,\mathfrak{e}_k$ and $\mathbf{J}_{\mathbf{m},\mathbf{n}}$ be as in Definition~\ref{jmn}, we have that
   \begin{equation}
   \mathfrak{I}_{m,n}(\mathrm{symm}(\otimes_{k=1}^{\infty}\mathfrak{e}_k^{\otimes m_k})\otimes \mathrm{symm}(\otimes_{k=1}^{\infty}\bar{\mathfrak{e}}_k^{\otimes n_k}))=\sqrt{\mathbf{m}!\mathbf{n}!}\mathbf{J}_{\mathbf{m},\mathbf{n}}
\end{equation}
   As a consequence, the linear operator $\mathfrak{I}_{m,n}$ provides an isometry from $\FH_{\Cnum}^{\odot m}\otimes\FH_{\Cnum}^{\odot n}$ onto the $(m,n)$-th chaos $\mathscr{H}_{m,n}(Z)$.
\end{thm}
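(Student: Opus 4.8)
The plan is to prove the three assertions in order, leaning on the Wirtinger-type derivative calculus for $\mathfrak{I}_{m,n}$ already established above rather than unwinding the iterated divergence by hand. For the first identity I would argue by induction on $m+n$. The base case $(m,n)=(0,0)$ is immediate, since $\mathfrak{I}_{0,0}(1)=1=J_{0,0}(Z(h))$. For the inductive step I set $F=\mathfrak{I}_{m,n}(h^{\otimes m}\otimes\bar h^{\otimes n})-J_{m,n}(Z(h))$, which lies in the single chaos $\mathscr{H}_{m,n}(Z)$, and show that both Malliavin derivatives $DF$ and $\bar D F$ vanish. Since $D$ and $\bar D$ together recover the full real Malliavin gradient, an element with $DF=\bar D F=0$ is a.s. constant; being an element of $\mathscr{H}_{m,n}(Z)$ with $(m,n)\neq(0,0)$, that constant must be $0$ by orthogonality to the constants (equivalently $E[\mathfrak{I}_{m,n}(\cdot)]=0=E[J_{m,n}(Z(h))]$ via the isometry property stated above). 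This forces $F=0$.

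The two derivative computations are where the recursion lives. On one side, the formula for $D^a\bar D^b\mathfrak{I}_{m,n}(f)$ proved above specializes, for $f=h^{\otimes m}\otimes\bar h^{\otimes n}$, to $D\mathfrak{I}_{m,n}(h^{\otimes m}\otimes\bar h^{\otimes n})=m\,\mathfrak{I}_{m-1,n}(h^{\otimes(m-1)}\otimes\bar h^{\otimes n})\,h$ and $\bar D\mathfrak{I}_{m,n}(h^{\otimes m}\otimes\bar h^{\otimes n})=n\,\mathfrak{I}_{m,n-1}(h^{\otimes m}\otimes\bar h^{\otimes(n-1)})\,\bar h$, viewed as $\FH_{\Cnum}$-valued variables. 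On the other side, the chain rule together with $DZ(h)=h$, $D\overline{Z(h)}=0$ (and the conjugate relations) and the lowering relations $\tfrac{\partial}{\partial z}J_{m,n}(z)=m\,J_{m-1,n}(z)$, $\tfrac{\partial}{\partial\bar z}J_{m,n}(z)=n\,J_{m,n-1}(z)$ give $D\,J_{m,n}(Z(h))=m\,J_{m-1,n}(Z(h))\,h$ and $\bar D\,J_{m,n}(Z(h))=n\,J_{m,n-1}(Z(h))\,\bar h$. Comparing the two, the inductive hypotheses at $(m-1,n)$ and at $(m,n-1)$ yield $DF=0$ and $\bar D F=0$; in the boundary cases $n=0$ (resp.\ $m=0$) the corresponding derivative vanishes on both sides directly, because both the lowering relation and the derivative formula return zero. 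This closes the induction.

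For the second identity I would exploit multiplicativity over the orthonormal system $\set{\mathfrak{e}_k}$. Since distinct $Z(\mathfrak{e}_k)$ are independent symmetric complex Gaussians, the multiple integral of the symmetrized tensor $\mathrm{symm}(\otimes_k\mathfrak{e}_k^{\otimes m_k})\otimes\mathrm{symm}(\otimes_k\bar{\mathfrak{e}}_k^{\otimes n_k})$ factorizes as $\prod_k\mathfrak{I}_{m_k,n_k}(\mathfrak{e}_k^{\otimes m_k}\otimes\bar{\mathfrak{e}}_k^{\otimes n_k})$. Applying the first identity to $h=\sqrt2\,\mathfrak{e}_k$ (so that $\norm{h}_{\FH_{\Cnum}}=\sqrt2$ and $h^{\otimes m_k}=2^{m_k/2}\mathfrak{e}_k^{\otimes m_k}$) gives $\mathfrak{I}_{m_k,n_k}(\mathfrak{e}_k^{\otimes m_k}\otimes\bar{\mathfrak{e}}_k^{\otimes n_k})=2^{-(m_k+n_k)/2}J_{m_k,n_k}(\sqrt2\,Z(\mathfrak{e}_k))$; taking the product over $k$ and comparing with the definition of $\mathbf{J}_{\mathbf{m},\mathbf{n}}$ in Definition~\ref{jmn} produces exactly the factor $\sqrt{\mathbf{m}!\mathbf{n}!}$. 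The isometry statement then follows formally: the isometry property above reads $E[\mathfrak{I}_{m,n}(f)\overline{\mathfrak{I}_{m,n}(g)}]=m!n!\innp{f,g}_{\FH_{\Cnum}^{\otimes(m+n)}}$, so $\mathfrak{I}_{m,n}$ is an isometry from $\FH_{\Cnum}^{\odot m}\otimes\FH_{\Cnum}^{\odot n}$ (with the norm $\sqrt{m!n!}\norm{\cdot}$) into $L^2_{\Cnum}(\Omega)$, and by the second identity its range contains every $\mathbf{J}_{\mathbf{m},\mathbf{n}}$, which by Proposition~\ref{ppp1}(i) form a complete orthonormal system of $\mathscr{H}_{m,n}(Z)$; hence the range is exactly $\mathscr{H}_{m,n}(Z)$, and $\mathfrak{I}_{m,n}$ coincides with $\mathscr{I}_{m,n}$.

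I expect the main obstacle to be the two structural inputs feeding the induction and the factorization. First, one must verify that the abstract formula for $D^a\bar D^b\mathfrak{I}_{m,n}$ truly specializes to the direction bookkeeping claimed above, i.e.\ that the leftover tensor slot is produced as $h$ (resp.\ $\bar h$) with the correct scalar coefficient. Second, one must justify the factorization of $\mathfrak{I}_{m,n}$ across the orthogonal directions $\mathfrak{e}_k$, which rests on the independence of the $Z(\mathfrak{e}_k)$ and on the behaviour of the iterated divergence $\delta^m\bar\delta^n$ on symmetrized product tensors. Threading the normalization $\norm{h}_{\FH_{\Cnum}}=\sqrt2$, which is the source of every power of $2$, consistently through both steps is the part most prone to sign- and constant-errors.
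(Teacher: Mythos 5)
There is no proof in the paper to compare against: this theorem sits in the appendix, which the authors explicitly preface with ``All the proofs are omitted,'' indicating only that everything should run ``along the routine of'' the real case in \cite{np}. Judged on its own, your plan is correct. The route the paper gestures at would prove the first identity by induction using the recursion $J_{m+1,n}(z)=zJ_{m,n}(z)-2n\,J_{m,n-1}(z)$ (immediate from the creation-operator definition (\ref{itldefn}) with $\rho=2$) together with the commutation formula for $\delta$ applied to products $F\mathfrak{h}$; your variant --- show that the difference $F$ of the two sides satisfies $DF=\bar{D}F=0$ and $E[F]=0$, hence vanishes --- is an equally legitimate induction, with the merit that it invokes exactly the two propositions the paper states just before the theorem (the formula for $D^a\bar{D}^b\mathfrak{I}_{m,n}$ and the isometry), which logically precede it and may be used. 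The three verifications you flag are indeed the only substantive ones, and all three are true: vanishing of both $D$ and $\bar{D}$ forces constancy because $\partial/\partial x_j=\partial_j+\bar{\partial}_j$ and $\partial/\partial y_j=\mi(\partial_j-\bar{\partial}_j)$, so the real Malliavin gradient with respect to $W$ is recovered linearly from $(DF,\bar{D}F)$; the lowering relations $\partial_z J_{m,n}=mJ_{m-1,n}$ and $\partial_{\bar z}J_{m,n}=nJ_{m,n-1}$ follow from (\ref{itldefn}) via the commutators $[\partial,\partial^*]=1/\rho$ and $[\partial,\bar{\partial}^*]=0$; and the factorization over orthogonal directions holds because the correction term in $\delta(G\,\mathfrak{e}_k)=G\,Z(\mathfrak{e}_k)-(\text{correction})$ pairs $\mathfrak{e}_k$ against the directions on which the smooth functional $G$ depends, hence vanishes when $G$ is built only from $Z(\mathfrak{e}_j)$, $j\neq k$; iterating this, together with invariance of iterated divergences under separate symmetrization of the $m$ and $n$ slots, gives your product formula. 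Your normalization bookkeeping is exact: $h=\sqrt2\,\mathfrak{e}_k$ yields $\mathfrak{I}_{m_k,n_k}(\mathfrak{e}_k^{\otimes m_k}\otimes\bar{\mathfrak{e}}_k^{\otimes n_k})=2^{-(m_k+n_k)/2}J_{m_k,n_k}(\sqrt2\,Z(\mathfrak{e}_k))$, which against Definition~\ref{jmn} produces precisely the factor $\sqrt{\mathbf{m}!\mathbf{n}!}$.

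One half-step in the final ``onto'' assertion should be made explicit. The second identity shows that the images of the vectors $\mathrm{symm}(\otimes_{k}\mathfrak{e}_k^{\otimes m_k})\otimes \mathrm{symm}(\otimes_{k}\bar{\mathfrak{e}}_k^{\otimes n_k})$, which span a dense subspace of $\FH_{\Cnum}^{\odot m}\otimes\FH_{\Cnum}^{\odot n}$, lie in $\mathscr{H}_{m,n}(Z)$; by continuity of the (scaled) isometry the whole range is therefore contained in $\mathscr{H}_{m,n}(Z)$, while closedness of the isometric image of a complete space plus completeness of the system $\set{\mathbf{J}_{\mathbf{m},\mathbf{n}}}$ from Proposition~\ref{ppp1} gives the reverse inclusion. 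This yields both that the range equals $\mathscr{H}_{m,n}(Z)$ and, as the paper remarks afterwards, that $\mathfrak{I}_{m,n}$ coincides with $\mathscr{I}_{m,n}$.
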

The above theorem and Proposition~\ref{ppp1} imply that the linear operator $\mathfrak{I}_{m,n}$ is exactly $\mathscr{I}_{m,n}$ given in Definition~\ref{imn}.

\vskip 0.2cm {\small {\bf  Acknowledgements}}\   This work was
supported by  NSFC(No.11101137, No.11271029, No.11371041). Liu Y. is partly supported by Center for Statistical Science, PKU.


\end{document}